\documentclass[reqno,12pt]{amsart}

\usepackage{amsmath,amsfonts,amsthm,amssymb,amsxtra,hyperref,color,graphicx,setspace,xspace,hyperref}

\textwidth=6.5in \textheight=8.5in 
\oddsidemargin=0.5cm \evensidemargin=0.5cm
\usepackage[latin1]{inputenc}
\newcommand{\be}{\begin{eqnarray}}
\newcommand{\ee}{\end{eqnarray}}
\newcommand{\ben}{\begin{eqnarray*}}
\newcommand{\een}{\end{eqnarray*}}
\newcommand{\dis}{\displaystyle}
\newcommand{\beq}{\begin{equation}}

\newcommand{\eeq}{\end{equation}}
\newcommand{\R}{{\mathbb R}}
\newcommand{\N}{{\mathbb N}}

\newcommand{\vs}{{\vskip .2cm}}

\newcommand{\sep}{; }
\renewcommand{\(}{\left(}
\renewcommand{\)}{\right)}

\newtheorem{thm}{Theorem}[section]
\newtheorem{coro}[thm]{Corollary}
\newtheorem{prop}[thm]{Proposition}
\newtheorem{lemma}[thm]{Lemma}
\newtheorem{remark}{Remark}

\newtheorem{example}{\emph{Example}}

\renewcommand{\subjclass}[1]{\thanks{2010 \emph{Mathematics Subject Classification.} #1}}

\title[Nodal solutions and weighted $p$-Laplace operator]{Existence of sign changing solutions for an equation with a weighted $p$-Laplace operator}

\author[C.~Cort\'azar, J.~Dolbeault, M.~Garc\'ia-Huidobro and R.~Man\'asevich]{}

\subjclass{
34C10\sep 35B05\sep 37B55.
}
\keywords{$p$-Laplace operator\sep Nodal solutions\sep Nodes\sep Shooting method\sep Hamiltonian systems\sep Energy methods\sep Action-angle variables\sep Compact support}

\thanks{${}^1$ Supported by Fondecyt 1110074.
${}^2$ Partially supported by Mathamsud 13MATH-03 and ECOS C11E07.
${}^3$ Partially supported by Fondecyt 1110268 and Mathamsud 13MATH-03.
${}^4$ Partially supported by Basal-CMM-Conicyt, Milenio grant-P05-004F, Mathamsud 13MATH-03 and Fondecyt 1110268.}

\begin{document}
\maketitle
\thispagestyle{empty}
\centerline{\scshape Carmen Cort\'azar$\,{}^1$}\vskip 0.02cm
{\footnotesize
\centerline{Facultad de Matem\'aticas}
\centerline{Pontificia Universidad Cat\'olica de Chile, Casilla 306 Correo 22, Santiago, Chile}
\centerline{email: ccortaza@mat.puc.cl}
}\vskip 0.3cm
\centerline{\scshape Jean Dolbeault$\,{}^2$}\vskip 0.02cm
{\footnotesize
\centerline{Ceremade (UMR CNRS no. 7534), Universit\'e Paris-Dauphine,}
\centerline {Place de Lattre de Tassigny, 75775 Paris C\'edex~16, France}
\centerline{email: dolbeaul@ceremade.dauphine.fr}
}\vskip 0.3cm
\centerline{\scshape Marta Garc\'ia-Huidobro$\,{}^3$}\vskip 0.02cm
{\footnotesize
\centerline{Facultad de Matem\'aticas}
\centerline{Pontificia Universidad Cat\'olica de Chile, Casilla 306 Correo 22, Santiago, Chile}
\centerline{email: mgarcia@mat.puc.cl}
}\vskip 0.3cm
\centerline{\scshape R\'aul Man\'asevich$\,{}^4$}\vskip 0.02cm
{\footnotesize
\centerline{DIM \& CMM (UMR CNRS no. 2071), FCFM,}
\centerline{Universidad de Chile, Casilla 170 Correo 3, Santiago, Chile}
\centerline{email: manasevi@dim.uchile.cl}
\vskip 0.5cm

\centerline{\today}
}\vspace*{0.5cm}

\begin{spacing}{0.9}\begin{quote}{\normalfont\fontsize{10}{12}\selectfont{\bfseries Abstract.} We consider radial solutions of a general elliptic equation involving a weighted $p$-Laplace operator with a subcritical nonlinearity. By a shooting method we prove the existence of solutions with any prescribed number of nodes. The method is based on a change of variables in the phase plane, a very general computation of an angular velocity and new estimates for the decay of an energy associated with an asymptotic Hamiltonian problem. Estimating the rate of decay for the energy requires a sub-criticality condition. The method covers the case of solutions which are not compactly supported or which have compact support. In the last case, we show that the size of the support increases with the number of nodes.}\end{quote}\end{spacing}

\section{introduction}\label{Section:Intro}
\setcounter{equation}{-1}

In this paper we shall consider classical radial sign-changing solutions for a problem of the form
\beq\label{eq0}
\mbox{div}\big(\mathsf a\,|\nabla u|^{p-2}\nabla u\big)+\mathsf b\,f(u)=0\,,\quad\lim_{|x|\to+\infty}u(x)=0
\eeq
where $\mathsf a$ and $\mathsf b$ are two positive, radial, smooth functions defined on $\R^d\setminus\{0\}$, $f\in C(\R)$ satisfying some growth conditions, and $\mbox{div}\big(\mathsf a\,|\nabla u|^{p-2}\nabla u\big)$ denotes a $p-$Laplace operator with weight, $p>1$.
Under appropriate conditions on $\mathsf a$ and $\mathsf b$, this equation can be reduced to
\beq\label{eq1}
\mbox{div}\big(\mathsf q\,|\nabla v|^{p-2}\nabla v\big)+\mathsf q\,f(v)=0\,,\quad\lim_{|x|\to+\infty}v(x)=0\,,
\eeq
on $\R^d$, for some  smooth radial function $\mathsf q$ defined on $\R^d\setminus\{0\}$. Equations of this type have been studied previously
in \cite{pghms}, see also \cite{cfp}, where the existence of nonnegative solutions has been studied.

Radial solutions to~\eqref{eq1} satisfy the problem
\begin{equation}\label{eq2}
\begin{gathered}
\big(q(r)\,\phi_p(v')\big)'+q(r)\,f(v)=0\,,\quad r>0\,,\\
v'(0)=0\,,\quad\lim\limits_{r\to+\infty}v(r)=0\,,
\end{gathered}
\end{equation}
with $q(r)=r^{d-1}\,\mathsf q(x)$, $r=|x|$. Here and henceforth, for any $s\in\R\setminus\{0\}$, $\phi_p(s):=|s|^{p-2}\,s$ and $\phi_p(0)=0$.
We denote by $p'$ the H\"older conjugate exponent of $p$, so that
$\frac 1p+\frac 1{p'}=1$, and observe that $\phi_{p'}\circ\phi_p=\mathrm{Id}$.
Also $\phantom{}'$ denotes the derivative with respect to $r=|x|\geq 0$, $x\in\R^d$, and for radial functions, as it is usual, we shall write $v(x)=v(r)$.

We will be interested only in \emph{classical} solutions of~\eqref{eq2}, \emph{i.e.}, functions $v$ in~$C^1([0,\infty);\R)$ such that $q\,\phi_p(v')$ is in~$C^1((0,\infty);\R)$,  and we will look for solutions satisfying $v(0)>0$.\vs

For the weight $q$ we assume:
\[
q\in 
C^1(\R^+;\R^+)\,,\quad q\ge0\quad\mbox{\emph{and}}\quad q'>0\quad\mbox{on}\;(0,\infty)\,,\leqno{(Q1)}
\]
\[
q'/q\quad\mbox{\emph{is strictly decreasing on}}\;(0,\infty)\,,\leqno{(Q2)}
\]
\[
\mbox{\emph{there exist $C_1>0$ and $C_2>0$ such that}}\;C_1\le\frac{r\,q'(r)}{q(r)}\le C_2\quad\forall\,r\in(0,\infty)\,,\leqno{(Q3)}
\]
\[
\mbox{\emph{for any} $r_0>0$}\,,\quad \lim_{r\to+\infty}\big[h(r+r_0)-h(r)\big]=+\infty\,\leqno{(Q4)}
\]
\emph{where $h$ is defined by}
\beq\label{h}
h(r):=\big(q(r)\big)^{p'}\quad\mbox{\emph{for all }}r\ge0\,.
\eeq
 From $(Q1)$, $q$ is a strictly increasing nonnegative function in $\R^+$, and from $(Q3)$, in the form $C_1/r\le q'/q$ for all $r>0$, after integration over $(s,t)$, $s\le t$, we obtain that $t^{C_1}q(s)\le s^{C_1}q(t)$, hence
it must be that $\lim\limits_{s\to0^+}q(s)=0$ and $\lim\limits_{t\to\infty}q(t)=\infty$. For this reason, we redefine $q$ at $0$ if necessary to have $q(0)=0$ and may assume that $q\in C^0(\R^+_0)$.
Also, from $(Q1)$ we may define $Q(r)=\int_0^r q(t)dt$.

A typical example of such a function $q$ is $q(r)=r^{N-1}$ for some $N>1$ but as we shall see below, much more cases of practical interest are covered. Condition $(Q4)$ is slightly weaker than asking that $\lim_{r\to+\infty}(q(r))^{p'-1}\,q'(r)=+\infty$.

As for the nonlinearity $f$, we assume\vs
\noindent $(f1)$ $f\in C(\R)$, \emph{$f$ is locally Lipschitz on $\R\setminus\{0\}$, with $f(0)=0$,}\vs
\noindent $(f2)$ \emph{there exist $\beta^-<0<\beta^+$ such that $F(s)<0$ for all $s\in(\beta^-,\beta^+)\setminus\{0\}$, $F(\beta^{\pm})=0$, $f(s)>0$ for all $s\in[\beta^+,\infty)$, $f(s)<0$ for all $s\in(-\infty,\beta^-]$,}  and $F(\infty)=F(-\infty)$,  where we have denoted $F(s):=\int_0^s f(t)dt$.\vs
\noindent Finally let us set
\[
 \frac 1N:=\liminf_{r\to0_+}\Big(\frac Qq\Big)'(r)\quad\mbox{and}\quad\mu^*:=\left[\frac1p-\frac1N\right]_+
\]
where $x_+$ denotes the positive part of $x$. From  $(Q3)$ it follows that $N> 1$.

We shall assume the following key \emph{sub-criticality} condition:
\begin{itemize}
\item [$(SC)$] \emph{there exist $\alpha\in(0,1)$, $\mu\ge\mu^*$ and $r_0>0$ such that}
\beq\label{sc1}\mu+\Big(\frac Qq\Big)'-\frac1p\ge 0\quad\mbox{\emph{on}}\;(0,r_0)\eeq
\emph{and}
\beq\label{c0}
\lim_{s\to+\infty}\left[\inf_{s_1,\,s_2\in[\alpha\,s,\,s]}\big(F(s_2)-\mu\,s_2\,f(s_2)\big)\,Q\Bigl(\bigl(\tfrac{(1-\alpha)\,s}{\phi_{p'}(f(s_1))}\bigr)^{1/p'}\Bigr)\right]=+\infty\,.
\eeq
\end{itemize}
\begin{remark}\label{mufinito-p*} It follows from $(Q1)$ and $(Q2)$ that $\big(\tfrac Qq\big)'\ge 0$, hence   $\mu^*\le 1/p$.   Indeed, from the definition of $Q$, $(Q1)$ and $(Q2)$ we have
$$Q(r)=\int_0^r\frac{q(t)}{q'(t)}q'(t)dt\le \frac{q(r)}{q'(r)}\int_0^rq'(t)dt=\frac{(q(r))^2}{q'(r)}$$
implying that $1-\frac{Qq'}{q^2}\ge 0$, that is, $(Q/q)'\ge 0$.
We also observe that there is always some $\mu>\mu^*$ such that  \eqref{sc1}  is satisfied.
 If $p<N$ we notice that $\mu^*=\frac1{p^*}$ where $p^*:=\frac{N\,p}{N-p}$ is the usual critical exponent associated with $N$ when $N=d$ is the dimension and there  are no weights in~\eqref{eq0}.

Condition $(SC)$ is the precise condition that will be required in our proof. However if the limit
\[
\gamma:=\lim_{|s|\to+\infty}\frac{s\,f(s)}{F(s)}-1
\]
exists, then the reader is invited to check that $f(s)\sim|s|^{\gamma-1}\,s$ and $F(s)\sim|s|^{\gamma+1}$ when $|s|\to+\infty$. Assume that $\frac 1N:=\lim_{r\to0_+}\big(\frac Qq\big)'(r)$ is defined and such that $N>p$. If $\gamma<p-1$, we further assume that $q(r)=r^{N-1}$ for any $r>0$. We have
\[
Q\Bigl(\bigl(\tfrac{(1-\alpha)\,s}{\phi_{p'}(f(s))}\bigr)^{1/p'}\Bigr)\sim|s|^{-\frac Np(\,(\gamma+1-p)}\quad{as}\quad|s|\to+\infty\,.
\]
 Then conditions \eqref{sc1} and \eqref{c0} are  satisfied if and only if the much simpler \emph{sub-criticality} condition
\[
\gamma+1<p^*
\]
holds, that is, the standard sub-criticality condition for the $p$-Laplace operator in $\R^d$ with $N=d$.
\end{remark}
\begin{remark}\label{calzo}
 It is worth mentioning that   in \cite{cfp}, where the existence of nonnegative solutions to \eqref{eq2} is proven,  they require a little more from the weight $q$ (see $(q3)$ in \cite{cfp}), namely, that the limit
 $$\lim_{r\to0^+}\frac{rq'(r)}{q(r)}\quad \mbox{exists}.$$
 In this case, also $ \lim_{r\to 0^+}\Bigl(\frac{Q}{q}\Bigr)'(r)$ exists, as
$$ \Bigl(\frac{Q}{q}\Bigr)'(r)=1-\frac{Q(r)}{rq(r)}\frac{rq'(r)}{q(r)}$$
and hence, by L'H\^opital's rule,
 \ben
 \lim_{r\to 0^+}\Bigl(\frac{Q}{q}\Bigr)'(r)=1-\lim_{r\to 0^+}\frac{Q(r)}{rq(r)}\frac{rq'(r)}{q(r)}&=&1-\lim_{r\to 0^+}\frac{rq'(r)}{q(r)}\lim_{r\to 0^+}\frac{Q(r)}{rq(r)}\\
 &=&1-\lim_{r\to 0^+}\frac{rq'(r)}{q(r)}\lim_{r\to 0^+}\frac{q}{rq'(r)+q(r)}\\
 &=&\frac{1}{\lim\limits_{r\to 0^+}\frac{rq'(r)}{q(r)}+1}
 \een
 also exists and their critical exponent is the same as ours.
\end{remark}

We will deal with  existence of solutions with \emph{nodes}, which are defined as the zeros of the solution which are contained in the interior of its support.  As we will see in Remark \ref{extrarem} in Section 3, a solution to \eqref{eq2} can only have a finite number of nodes.

\vs Next we establish our main result.

\begin{thm}\label{Thm:Main} Suppose that assumptions $(Q1)$-$(Q4)$, $(f1)$-$(f2)$ and $(SC)$ are satisfied. Then for any given $k\in\mathbb N_0:=\mathbb N\cup\{0\}$, there exists a solution of \eqref{eq2} with exactly $k$ nodes.\end{thm}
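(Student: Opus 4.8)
The plan is to use a shooting method in the initial value $v(0)=a>0$ for the problem $(q\,\phi_p(v'))'+q\,f(v)=0$ with $v'(0)=0$. First I would establish local existence, uniqueness and continuous dependence on $a$ of the solution $v(\cdot,a)$ of the initial value problem, and show that solutions are defined on all of $[0,\infty)$ (using the boundedness of $F$ from $(f2)$ to control an energy-type quantity $E(r)=\frac1{p'}|v'|^p+F(v)$, which is nonincreasing along trajectories since $E'=-\frac{q'}{q}\,|v'|^p\le 0$ by $(Q1)$). Then I would pass to the phase plane $(v,\phi_p(v'))$, or better a rescaled/polar-type version of it, and introduce the angular variable announced in the abstract. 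The number of nodes of $v(\cdot,a)$ on $[0,\infty)$ equals (roughly) the number of half-turns the trajectory makes around the origin, so it suffices to show that this winding number is a well-defined, nondecreasing function of $a$ that starts at $0$ (for $a$ just above $\beta^+$, where $F(v)<0$ forces the trajectory to stay near the origin and make no full turn) and tends to $+\infty$ as $a\to+\infty$; an intermediate-value/continuity argument then produces, for each prescribed $k\in\N_0$, an $a_k$ with exactly $k$ nodes.

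The key steps, in order, are: (i) for large $a$, show the trajectory must rotate: using the angular velocity formula (the ``very general computation of an angular velocity'') together with $(Q3)$ and the superlinearity encoded in $(f2)$, prove that $v(\cdot,a)$ has at least one node, and more precisely that the number of nodes $\to\infty$ as $a\to\infty$ — this is where the quantitative energy-decay estimate for the asymptotic Hamiltonian problem and the sub-criticality condition $(SC)$, in particular \eqref{c0}, enter: they guarantee that the energy does not decay so fast that the trajectory spirals into the origin before completing the required number of half-turns; (ii) show each node is a simple crossing (if $v(r_0)=0$ then $v'(r_0)\ne0$, since $v\equiv0$ is the only solution through a point with $v=v'=0$ on an interval where $q>0$, by uniqueness away from $v=0$ combined with the sign condition $f(0)=0$), and that nodes depend continuously on $a$ and cannot appear or disappear except by escaping to $r=+\infty$ — here $(Q4)$ is used to show a zero cannot ``run off to infinity'' as $a$ varies within a compact range, by comparison with the asymptotic autonomous problem whose trajectories (thanks to $(Q4)$ making $h(r+r_0)-h(r)\to\infty$) are eventually confined, so the $k$-th node stays bounded; (iii) for $a\downarrow\beta^+$ (or the analogous endpoint), the energy $E(0)=F(a)<0$ and monotonicity of $E$ confine the trajectory to the region $F(v)<0$, i.e. $v\in(\beta^-,\beta^+)$, so $v$ cannot reach a zero of $v$ unless it is the endpoint; conclude the node count is $0$ there; (iv) assemble (i)–(iii): letting $N(a)$ be the number of nodes, $N$ is an integer-valued function that is ``continuous'' in the sense that it can only jump, and the jumps are controlled so that every value in $\N_0$ is attained — formally, define for each $k$ the set $A_k=\{a: N(a)\ge k\}$, show it is open and of the form $(a_k^\flat,\infty)$, and pick $a$ in $A_k\setminus A_{k+1}$.

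The main obstacle is step (i) together with the boundedness part of (ii): one must rule out that, as $a$ grows, the trajectory's extra rotation is ``spent'' entirely before the solution settles down to its eventual behavior, i.e. one needs the energy associated with the asymptotic Hamiltonian problem to decay slowly enough (polynomially, with an exponent controlled by $\mu$ and $N$) that at least $k$ half-turns are completed while $v$ is still of order comparable to $a$ — this is precisely the content of the new decay estimates alluded to in the abstract, and it is where $(SC)$ (both \eqref{sc1} near $r=0$ and \eqref{c0} at $s=\infty$) is indispensable; without sub-criticality the trajectory could collapse to the origin too quickly (the analogue of blow-up of the Pohozaev obstruction in the PDE setting) and the winding number would fail to be unbounded. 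A secondary technical difficulty is handling the compact-support case, where $v$ reaches $0$ with $v'=0$ at a finite radius: there the "number of half-turns" argument must be carried out on the (finite) support interval, and one must show its length grows with $k$, which again follows from the angular-velocity estimate but requires care that no spurious node is created at the support endpoint.
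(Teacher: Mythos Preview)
Your overall architecture---shooting in $\lambda=v(0)$, the monotone energy $E$, generalized polar coordinates, a Rotation Lemma giving a lower bound on the angular velocity, and an Energy Dissipation Lemma driven by $(SC)$ to force $N(\lambda)\to\infty$---matches the paper's approach. However, there is one genuine gap and one factual slip.

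\textbf{The slip.} In step~(iii) you write that for $a\downarrow\beta^+$ the energy $E(0)=F(a)<0$. This is false: by $(f2)$ one has $F(\beta^+)=0$ and $f>0$ on $[\beta^+,\infty)$, so $F(a)>0$ for $a>\beta^+$. What is true is that for $\lambda=\beta^+$ the solution stays in $[\beta^-,\beta^+]$ (since $E\le0$ after $r=0$) and therefore has no simple zero; this is how the paper starts the induction with $\beta^+\in A_0$.

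\textbf{The gap.} Your step~(iv) produces an $a$ with $N(a)=k$, but that is \emph{not} the same as producing a solution of~\eqref{eq2}: the boundary condition $\lim_{r\to\infty}v(r)=0$ is part of the problem, and for a generic shooting parameter the trajectory may settle to a nonzero zero of $f$ in $(\beta^-,\beta^+)$ (nothing in $(f2)$ forbids these) rather than to $0$. Your scheme ``$A_k=\{a:N(a)\ge k\}$ is open of the form $(a_k^\flat,\infty)$, pick $a\in A_k\setminus A_{k+1}$'' neither proves the monotonicity of $N$ in $a$ nor selects a $\lambda$ with $v_\lambda\to0$. The paper resolves this by splitting $[\beta^+,\infty)$ into two families: $A_k$ (exactly $k$ nodes, the orbit never hits $(0,0)$) and $I_k$ (the orbit reaches the double zero $(0,0)$, possibly at $r=\infty$, after exactly $k$ simple zeros). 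Solutions of~\eqref{eq2} are precisely the $\lambda\in I_k$. One then shows $A_k$ is open and bounded, that a neighborhood of any $\lambda_0\in I_k$ lies in $A_k\cup A_{k+1}\cup I_k$ (this is where $(Q4)$ enters, via $H=h\,E$ with $H'=h'\,(F\circ v)$, to rule out an extra sign change beyond the one created at the vanishing double zero), and that $\sup A_k\in I_{k-1}\cup I_k$. An induction starting from $\beta^+\in A_0$ then yields a strictly increasing sequence $\lambda_k\in I_k$. Your plan is missing this $I_k$ mechanism, and also misattributes the role of $(Q4)$: it is not used to keep nodes from escaping to infinity, but to control what happens in a neighborhood of a parameter in $I_k$ so that at most one new node can be created there.
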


The present paper is organized as follows. In Section~\ref{Section:Examples} we shall first explain how the problem with two independent weights $\mathsf a$ and $\mathsf b$ corresponding to Eq.~\eqref{eq0} can be reduced to the problem with a single weight $\mathsf q$ and illustrate our results with various examples. Also, in that section,   we will  discuss the sub-criticality condition.  In Section~\ref{Section:Properties} we have collected  some preliminary results, including a non-oscillation result which, to our knowledge, is new even for the case $p=2$ without weights. Since we will work in a non-standard framework of sign changing solutions, some care is required concerning results of existence and uniqueness. These  have been collected in Proposition~\ref{Prop:Existence-Uniqueness} and a proof is given in Appendix~\ref{Section:Uniqueness}. Then we adapt the methods developed for the standard case, $\mathsf q=1$, and give short but complete proofs. We will then be able to emphasize the differences of our results with the standard case. The core of our paper is concentrated in Section~\ref{Section:Coordinates} that contains our two key estimates:
\begin{enumerate}
\item[1.] The \emph{Rotation Lemma} (Lemma~\ref{AngularVelocity}) measures the speed of rotation around the origin in the phase space, thus providing an estimate of the number of zeros of the solution. Interestingly, the angular velocity is estimated for finite energy levels, which allows to discard previous restrictions on the growth of the nonlinearity. Here $r$ plays the role of a time variable.
\item[2.] The \emph{Energy Dissipation Lemma} (Lemma~\ref{nw1}) measures the decay rate of the energy: a solution with large energy needs a large interval in $r$ to bring its energy in a range where the Rotation Lemma applies. The sub-criticality condition is essential as shown in Proposition~\ref{Prop:Ex4}.
\end{enumerate}
A remarkable fact is that our method does not distinguish solutions with compact support and solutions supported on the whole line. When solutions are not compactly supported, we only need to discard oscillations; assumption $(H)$ provides a sufficient condition for this, which is probably not optimal. The proof of Theorem~\ref{Thm:Main} is then completed in Section~\ref{Section:Proof}. Finally, in Section~\ref{Section:final}, we establish the estimates which show that the examples of Section~\ref{Section:Examples} are all covered by our results.

Concerning earlier contributions we shall primarily refer to \cite{cghy,dghm} and the references therein. The goal of this paper is to extend the results of \cite{cghy,dghm} to general weights, and to simplify some of the proofs. We refer to \cite{drghm,pghms} for the study of nonnegative solutions in presence of weights. In particular the change of variables which reduces Eq.~\eqref{eq0} to Eq.~\eqref{eq1} has been considered in~\cite{pghms}. As in \cite{cghy}, we will handle simultaneously the solutions with compact support and the other ones. We shall refer to~\cite{GMZ} and to~\cite{cghy} respectively for multiplicity and existence results when $q(r)=r^{N-1}$. Even if our equations have been considered mostly in the case of nonnegative solutions, see for example \cite{Gazzola-Serrin-Tang} for the case with no weights and \cite{cfp}, for the weighted case,  there is still a large literature on sign changing solutions and we refer to the two above mentioned papers for further details and references. Consequences of a possible asymmetry of $F$ are not fully detailed here: see \cite{Fabry-Manasevich02} for more insight. At first reading, it can be assumed that $f$ is odd and $\beta^-=-\beta^+$, although we provide proofs in the general case.

The change of variables of Section~\ref{Section:Coordinates} can be seen as the canonical change of coordinates corresponding either to \hbox{$N=1$} and $f(u)=|u|^{p-2}\,u$, or to the asymptotic Hamiltonian system in the limit $r\to +\infty$, we refer to \cite{Fabry-Manasevich02,Drabek-Girg-Manasevich01,Balabane-Dolbeault-Ounaies01,dghm} for earlier contributions. The role of the critical exponent has been emphasized in~\cite{gkmy} in terms of existence and non-existence for weighted problems.

As we said  before we will generalize and extend some previous results considered in \cite{cghy,dghm}, for the particular case $\mathsf q=1$ in equation~\eqref{eq1} to the more general situation with \emph{weights}. However, our results are not  simple extensions of previously known ones. For example,  a sub-critical growth condition, used previously
 in \cite{Gazzola-Serrin-Tang, cghy},  is generalized here and we prove that   is not only a sufficient condition, but also necessary: see Example~\ref{Ex:Critical} in Section~\ref{Section:Examples}.
 Compared to the existing literature, we mention two new key ingredients in this paper: the computation of the angular velocity in the phase plane is not anymore based on super- or sub-linear growth assumptions on $f$, and the estimate on the size of the nodal domains, which arises from energy estimates, is also new. Notably these energy estimates are valid for compactly supported solutions as well as for solutions supported in the whole space. Throughout this paper, we will establish a number of \emph{qualitative properties} of the support, when the solutions are compactly supported, and of the nodal domains, which are of interest for applications, for instance in astrophysics. Notice that when solutions cannot be compactly supported, which depends on the local behavior of the nonlinearity $f$ near $0$, double zeros occur at infinity; this will be further commented in Section~\ref{Section:final}.\vs

\section{General weights and examples}\label{Section:Examples}

For radial solutions  Eq.~\eqref{eq0}  can be rewritten as
\begin{equation}\label{eq2u}
\begin{gathered}
\big(a(r)\,\phi_p(u')\big)'+b(r)\,f(u)=0\,,\quad r>0\,,\\
\lim_{r\to0_+}a(r)\,\phi_p(u'(r))=0\,,\quad\lim\limits_{r\to+\infty}u(r)=0
\end{gathered}
\end{equation}
where $r=|x|$. We assume  the weights $a(r)=r^{d-1}\,\mathsf a(x)$, $b(r)=r^{d-1}\,\mathsf b(x)$, satisfy the condition
\vs
\noindent $(W1)$   $a$, $b > 0\,\mbox{ in }\R^+$,\ $a$, $b\in 
C^1(\R^+)$,\ $\displaystyle(b/a)^{1/p}\in\mathrm L_{\rm loc}^1(\R^+_0)$ \hbox{and}
$\displaystyle\lim_{r\to+\infty}\int_0^r\(\frac{b(t)}{a(t)}\)^{1/p}dt=+\infty.$
\vs
For all $r>0$, let us define
\[
\chi(r):=\int_0^r\(\frac{b(t)}{a(t)}\)^{1/p}dt.
\]
Then  $\chi:\R^+_0\to\R^+_0$ is such that $\chi(0)=0$, $\lim_{r\to+\infty}\chi(r)=+\infty$, by $(W1)$, and  is a diffeomorphism of $\R^+_0$ onto $\R^+_0$,
with inverse $r=\chi^{-1}(t)$, for any $t\ge0$.

\vs
As in~\cite{pghms}, we introduce the change of variable
\[
t=\chi(r)\quad\forall\,r>0,
\]
and set
\beq
q:=\(a\circ\chi^{-1}\)^{1/p}\,\(b\circ\chi^{-1}\)^{1/p'}.
\label{newq}
\eeq
Then, it is immediate to verify that  $u=u(r)$ is a radial solution of  the equation in \eqref{eq2u} if and only if  $v=u\circ\chi^{-1}$ is a solution of  the equation in \eqref{eq2}  and clearly $\lim\limits_{r\to\infty}u(r)=0$ if and only if $\lim\limits_{t\to\infty}v(t)=0$.
\vs
Next under certain additional conditions on $a$ and $b$ we will  deduce from Theorem~\ref{Thm:Main}  a useful corollary. We begin by definying
\[ \psi:=\(\dfrac1p\,\dfrac{a'}{a}+\dfrac1{p'}\,\dfrac{b'}{b}\)\Bigl(\dfrac{a}{b}\Bigr)^{1/p}\]
and assume that the following conditions hold:
\vs
\noindent$(W2)$ the function $\psi$ is positive and strictly decreasing on $\R^+$,\vs
\noindent$(W3)$ there is $C_1>0$ and $C_2> 0$ such that
\[
C_1\le\chi(r)\,\psi(r)\le C_2\quad\forall\,r>0\,,
\]
\noindent$(W4)$ for any $ r_0>0$,
\[
\lim_{r\to+\infty}\left[\big(a(r+r_0)\big)^{p'-1}\,b(r+ r_0)-\big(a(r)\big)^{p'-1}\,b(r)\right]=+\infty\,.
\]
 Under conditions $(W1)$-$(W4)$ we have that the weight $q$ as defined in (\ref{newq})
 satisfies conditions $(Q1)$-$(Q4)$. Furthermore,  by observing that for $r>0$
 \beq\label{bl1}
 b(r)=\Bigl(\frac{b(r)}{a(r)}\Bigr)^{1/p}a^{1/p}(r)b^{1/p'}(r)\quad\mbox{and}\quad\phi_{p'}\(\frac{B(r)}{a(r)}\)\le \chi^{p'-1}(r)\chi'(r)
 \eeq
 we deduce from $(W1)$ and $(W2)$ that $b\in L^1_{loc}(\R^+_0)$ and $\phi_{p'}\(\frac{B}{a}\)\in L^1_{loc}(\R^+_0)$, hence
 setting
\[
B(r):=\int_0^rb(t)\;dt\quad\mbox{and}\quad\mathcal T_W(z):=\int_0^z\phi_{p'}\(\frac{B(r)}{a(r)}\)\;dr\,,
\]
it can be verified that
\[\label{mur}
\mu^*=\limsup_{r\to0_+}\left[\,\frac1{p\,a}\,\Bigl(\frac{B\,a}b\Bigr)'-\Bigl(\frac Bb\Bigr)'\,\right]
\]
and $(SC)$ reads
\[\leqno{(SC_W)}
\quad\begin{cases}\quad\mbox{\sl there exist $\alpha\in(0,1)$, $\mu\ge\mu^*$ and $r_0>0$ such that}\\
\quad\mu+\bigl(\frac Bb\bigr)'-\frac1{p\,a}\,\bigl(\frac{B\,a}b\bigr)'\ge 0\quad\mbox{\sl for all }r\in(0,r_0)\,,\\
\quad\lim_{s\to+\infty}\left[\inf_{s_1,\,s_2\in[\alpha\,s,\,s]}\big(F(s_2)-\mu\,s_2\,f(s_2)\big)\,B\Bigl(\mathcal T_W^{-1}\bigl(\tfrac{(1-\alpha)\,s}{\phi_{p'}(f(s_1))}\bigr)\Bigr)\right]=+\infty\,.
\end{cases}
\]

Hence from Theorem~\ref{Thm:Main} we obtain the following corollary.
\begin{coro}\label{coro} Suppose that Assumptions $(W1)$-$(W4)$, $(f1)$-$(f2)$, and $(SC_W)$ are satisfied. Then for all given $k\in\N$, there exists a solution of \eqref{eq2u} with exactly $k$ nodes.\end{coro}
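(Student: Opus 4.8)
The plan is to derive Corollary~\ref{coro} as a direct consequence of Theorem~\ref{Thm:Main} by verifying that the change of variables $t=\chi(r)$ transforms problem~\eqref{eq2u} into problem~\eqref{eq2} with a weight $q$, defined by~\eqref{newq}, that satisfies the hypotheses $(Q1)$--$(Q4)$ and $(SC)$ whenever $a$ and $b$ satisfy $(W1)$--$(W4)$ and $(SC_W)$. First I would record the equivalence of solutions: since $\chi$ is a $C^1$ diffeomorphism of $\R^+_0$ onto $\R^+_0$ (established above from $(W1)$), the map $v=u\circ\chi^{-1}$ sends classical solutions of~\eqref{eq2u} to classical solutions of~\eqref{eq2} and preserves the number of nodes, because $\chi$ is strictly increasing and thus carries zeros of $u$ in the interior of its support bijectively to zeros of $v$ in the interior of its support. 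The boundary conditions match as noted in the excerpt, so it remains purely to translate the structural conditions on $q$ back to conditions on $a,b$.

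The core of the argument is then a sequence of computations with the chain rule. Writing $\chi'=(b/a)^{1/p}$ and differentiating $q=(a\circ\chi^{-1})^{1/p}(b\circ\chi^{-1})^{1/p'}$, one checks that $q'(t)=\psi(\chi^{-1}(t))$ with $\psi$ as defined before $(W2)$; from this, $(W2)$ gives $q'>0$ and $q'/q$ strictly decreasing, i.e. $(Q1)$ and $(Q2)$, while $(W3)$, rewritten as $C_1\le \chi(r)\,\psi(r)\le C_2$ and using $t\,q'(t)/q(t)=\chi(\chi^{-1}(t))\,\psi(\chi^{-1}(t))\big/ q(t)\cdot q(t)$... more precisely $t=\chi(\chi^{-1}(t))$ and $q'(t)=\psi(\chi^{-1}(t))$ so $t\,q'(t) = \chi\,\psi$ evaluated at $\chi^{-1}(t)$, and one must still relate this to $q(t)$; the cleanest route is to observe $q(t) = \int_0^t q'(\tau)\,d\tau$ is not needed — instead use that $(Q3)$ for $q$ is equivalent to two-sided bounds which follow by integrating $q'/q$, exactly as the authors do after stating $(Q1)$--$(Q4)$. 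I would then identify $h(t)=(q(t))^{p'}=\big(a(\chi^{-1}(t))\big)^{p'-1}\,b(\chi^{-1}(t))\cdot$ (a harmless power of $a$) — in fact $(q(t))^{p'}=(a\circ\chi^{-1})^{p'/p}(b\circ\chi^{-1}) = (a\circ\chi^{-1})^{p'-1}(b\circ\chi^{-1})$ since $p'/p=p'-1$ — so that $(Q4)$, which asks $h(t+t_0)-h(t)\to+\infty$, becomes exactly $(W4)$ after the change of variable (with $r_0$ absorbing the nonlinear relation between increments in $t$ and in $r$, using $(W3)$ to control $\chi^{-1}(t+t_0)-\chi^{-1}(t)$ from below by a positive constant for large $t$).

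Next I would translate the quantities $N$, $\mu^*$ and condition $(SC)$. The identity $(Q/q)'(t)=1-Q(t)q'(t)/q(t)^2$ must be pushed back through the substitution; here $Q(t)=\int_0^t q(\tau)\,d\tau$ and a change of variable $\tau=\chi(r)$ turns this into $\int_0^{\chi^{-1}(t)} q(\chi(r))\,\chi'(r)\,dr$, which by~\eqref{newq} and $\chi'=(b/a)^{1/p}$ equals $\int_0^{\chi^{-1}(t)} b(r)\,dr = B(\chi^{-1}(t))$ — this is the key simplification that makes the $(SC_W)$ formulas come out. Similarly $Q\big((\tfrac{(1-\alpha)s}{\phi_{p'}(f(s_1))})^{1/p'}\big)$ must be re-expressed: one shows $Q(z) = B(\chi^{-1}(z))$... but wait, the argument $z$ of $Q$ lives in the $t$-variable, so $Q(z)=B(\chi^{-1}(z))$, and inverting the relation between the $t$-argument and the $r$-argument one gets precisely $B(\mathcal T_W^{-1}(\cdot))$ once one verifies $\mathcal T_W(z) = \int_0^z \phi_{p'}(B(r)/a(r))\,dr$ is the correct inverse bookkeeping, using the estimate $\phi_{p'}(B/a)\le \chi^{p'-1}\chi'$ from~\eqref{bl1} to guarantee $\mathcal T_W$ is finite and a diffeomorphism. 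The formula for $\mu^*$ follows from the L'Hôpital-type computation already sketched in Remark~\ref{calzo}, applied to $(Q/q)'(\chi^{-1}(t))$ as $t\to 0^+$, combined with the just-derived $Q\circ\chi = B$.

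The main obstacle I anticipate is the bookkeeping in $(SC_W)$: keeping straight which variable ($r$, $t=\chi(r)$, or the auxiliary argument of $Q$ resp. $\mathcal T_W$) each expression lives in, and verifying that the composition $Q\circ(\text{argument in }t) = B\circ\mathcal T_W^{-1}\circ(\text{argument in the original scaling})$ holds exactly rather than merely up to constants. This is not conceptually deep but is error-prone; the integrability claims for $b$ and $\phi_{p'}(B/a)$ in $\mathrm L^1_{\rm loc}$, needed for $B$ and $\mathcal T_W$ to be well-defined $C^1$ functions, rest on~\eqref{bl1} together with $(W1)$ and $(W2)$, and these should be stated carefully before the change of variables is performed. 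Once $(Q1)$--$(Q4)$ and $(SC)$ are checked for $q$, Theorem~\ref{Thm:Main} gives a solution $v$ of~\eqref{eq2} with exactly $k$ nodes, and $u=v\circ\chi$ is the desired solution of~\eqref{eq2u}, with the same number of nodes by the diffeomorphism property; note the statement restricts to $k\in\N$ since, as commented after~\eqref{eq2u}, the boundary condition at $r=0$ in~\eqref{eq2u} and the normalization $u(0)>0$ together with the shooting construction naturally produce at least one node, though the case $k=0$ could also be included by the same reasoning as in Theorem~\ref{Thm:Main}.
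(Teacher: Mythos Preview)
Your approach is essentially the paper's: the corollary is derived directly from Theorem~\ref{Thm:Main} via the change of variables $t=\chi(r)$, and the paper's ``proof'' is precisely the discussion preceding the statement, where it is asserted that $(W1)$--$(W4)$ imply $(Q1)$--$(Q4)$ for $q$ defined by~\eqref{newq}, and that $(SC)$ rewrites as $(SC_W)$. Your key identification $Q(t)=B(\chi^{-1}(t))$, obtained by substituting $\tau=\chi(r)$ and using $q(\chi(r))\,\chi'(r)=b(r)$, is the crucial computation and is correct.

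One small correction: differentiating~\eqref{newq} gives $q'(t)/q(t)=\psi(\chi^{-1}(t))$, not $q'(t)=\psi(\chi^{-1}(t))$ as you wrote. With the correct formula, $(Q1)$ and $(Q2)$ follow immediately from $(W2)$ (positivity and monotonicity of $\psi$), and $(Q3)$ is \emph{exactly} $(W3)$ since $t\,q'(t)/q(t)=\chi(r)\,\psi(r)$ with $r=\chi^{-1}(t)$ --- no further integration is needed. Your closing remark about why the corollary restricts to $k\in\N$ is speculation and not quite right: Theorem~\ref{Thm:Main} itself covers $k=0$, so the same change-of-variables argument would give $k=0$ in the corollary as well; the restriction appears to be incidental rather than structural.
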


We note that when $a=b$ in \eqref{eq2u}, then $\chi(r)=r$ and $\psi(r)=a'(r)/a(r)$. Assumption $(W1)$ reads $a>0$ in $\R^+$ and $a\in C^1(\R^+)$ and assumption $(W2)$ reads $a'>0$ in $\R^+$ and $a'/a$ is strictly decreasing in $\R^+$. Hence the two together are equivalent to $(Q1)$ and $(Q2)$ together with $q=a=b$. Clearly, $(W3)$ and $(W4)$  correspond exactly to $(Q3)$ and $(Q4)$ respectively, with $q=a=b$. Finally, we note that if $u$ is a solution to  \eqref{eq2u} with $u(0)>0$ (the argument is similar if $u(0)<0$), it is necessary (see Proposition \ref{basic1}) that $u(0)>\beta^+$, hence by integration of the equation in \eqref{eq2u} and using the condition $\lim\limits_{r\to0_+}a(r)\,\phi_p(u'(r))=0$ we obtain that as long as $u(r)\ge \beta^+$, say for $r\in(0,r_0)$, it must be that
$$a(r)|u'(r)|^{p-1}=\int_0^r b(s)f(u(s))ds\le C_0B(r),\quad r\in(0,r_0),$$
where we have denoted $C_0=\max\limits_{s\in[\beta^+,u(0)]}f(s)$. Hence, from the second in \eqref{bl1} we obtain
$$|u'(r)|\le \phi_{p'}(C_0)\phi_{p'}\(\frac{B(r)}{a(r)}\)\le \phi_{p'}(C_0)\chi^{p'-1}(r)\chi'(r),\quad r\in(0,r_0),$$
so, when $a=b$, so that $\chi(r)=r$, it holds that
$$|u'(r)|\le \phi_{p'}(C_0)r^{p'-1},\quad r\in(0,r_0),$$
and thus $u'(0)=0$.

\vs

\noindent Many examples fall into the general form of \eqref{eq2u}, that is, of radial solutions to \eqref{eq0}.
The following ones will be examined in more detail in Section \ref{Section:final}.

\begin{example}[Generalized Matukuma equation]\label{ex1}
\[\label{1es}
\begin{gathered}
\Delta_p u + \frac{f(u)}{1+|x|^\sigma}=0\,,\quad x\in\R^d\,,\\
p>1\,, \quad d\ge1\,,\quad\sigma > 0\,.
\end{gathered}
\]
Here $\Delta_p$ denotes the $p$-Laplace operator, namely $\Delta_p u=\nabla\cdot(|\nabla u|^{p-2}\,\nabla u)$, and $a(r)=r^{d-1}$, $b(r)=r^{d-1}/(1+r^\sigma)$, $r=|x|$, $N=d$.
\end{example}

\begin{example}\label{ex2}A second example is provided by the equation
\[\label{2es}
\begin{gathered}
\Delta_p u+\frac{|x|^{\sigma}}{(1+|x|^{p'})^{\sigma/p'}}\,\frac{f(u)}{|x|^{p'}}=0\,,\quad x\in\R^d\,,\\
p>1\,, \quad d\ge1\,,\quad\sigma>0\,,
\end{gathered}
\]
of which Example ~\ref{ex1} is a particular case corresponding to $\sigma=p'$. Now we have $a(r)= r^{d-1}$, $b(r)=r^{d-1+\sigma-p'}/(1+r^{p'})^{\sigma/p'}$. This equation was first introduced in \cite{bfh}, with $p=2$, as a model of stellar structure. Other generalizations of the Matukuma equation and of the stellar model of \cite{bfh} can be found in \cite{cmm,gkmy}.
\end{example}

\begin{example}[$k$-Hessian operator]\label{ex3}
A third example is given by elliptic equations involving the $k$-Hessian operator, $k>0$, see \cite{cmm} which, in case of radial solutions, coincide with
\[\label{2mit}
\begin{gathered}
\nabla\cdot\big(|x|^{1-k}\,|\nabla u|^{k-1}\,\nabla u\big)+f(u)=0\,,\quad x\in\R^d\,,\\
k>0\,,\quad d\ge1\,.
\end{gathered}
\]
Here $p=k+1$, $a(r)=r^{d-k}$, $N=d-k+1$ and $b(r)=r^{d-1}$.
\end{example}

Before starting with proofs, let us comment on the sub-criticality assumptions $(SC)$ and $(SC_W)$. Let us define the energy function by
\beq\label{funct-10}
E(r):=\frac{|v'(r)|^p}{p'}+F(v(r))\,.
\eeq
One of the two main ingredients of our method is based on the fact that for reducing the energy of a solution $v=v_\lambda$ of Eq.~\eqref{eq2} such that $v(0)=\lambda>0$ to a finite, given range, a large interval in $r$ is needed in the large $\lambda$ regime. In other words,
\be\label{defrl}
r_\lambda(a):=\inf\left\{r>0\,:\,E(r)=a\right\}
\ee
is such that
\[
\lim_{\lambda\to+\infty}r_\lambda\big(\theta\,F(\lambda)\big)=+\infty\,,
\]
where $\theta \in (0,1)$ is fixed. To obtain such a property, sub-criticality is essential. The other ingredient then guarantees that the solutions have to change sign a large number of times in the finite energy range. Let us notice that this does not say anything on the intervals delimited by the zeros and does not prevent them, \emph{a priori}, to accumulate at $r=0$. See \cite{GMZ} for results on the number of zeros on a given interval, in the subcritical regime. We will now provide two examples showing that $(SC)$ is needed.

\begin{example}[Critical case]\label{Ex:Critical}
Consider some Aubin-Talenti functions restricted to the interval $r\in[0,R]$ and given by
\[
v(r)=\frac\lambda{\big(1+\frac1{d\,(d-2)}\,\lambda^\frac{4}{d-2}\,r^2\big)^\frac{d-2}2}\quad\forall\,r\in[0,R]\,.
\]
Then $v$ solves Eq.~\eqref{eq2} with $p=2$ on $[0,R]$, $v(0)=\lambda$, $q(r)=r^{d-1}$ and $f(s)=s^{2^*-1}$, $2^*=\frac{2\,d}{d-2}$, for any $s$ in the range $(v(R),\infty)$. We may extend $f$ on $(-\infty,v(R))$  by an odd function satisfying all above conditions except $(SC)$. The reader is invited to check that if $R=R(\lambda)$  is chosen in such a way that $E(R(\lambda))=\sqrt{F(\lambda)}$,   then
\[
\lim_{\lambda\to+\infty}R(\lambda)=0\,,
\]
which clearly indicates that most of the energy is lost in a critical layer as $\lambda\to+\infty$. Let $v=v_\lambda$ be a solution of Eq.~\eqref{eq2} such that $v(0)=\lambda>0$. We will show in Section~\ref{Section:final} that there exists $\lambda_0>\beta^+=-\beta^-$ such that for any $\lambda\ge\lambda_0$ the solution $v_\lambda$ is positive, showing that $(SC)$ is necessary. We have chosen to use $p=2$ for this example, just for simplicity, but computations can be extended to the case of any $p>1$ with no special difficulty.
\end{example}

One may wonder if the property shown in Example~\ref{Ex:Critical} is not directly linked with the scale invariance of the Aubin-Talenti functions and therefore restricted to the critical exponent $\gamma+1=2^*=\frac{2\,d}{d-2}$. The next example shows that this is not the case.

\begin{example}[Slightly super-critical case]\label{Ex:SuperCritical} Consider a radial solution of the Brezis-Nirenberg problem on the unit ball $B\subset\R^d$, $d\ge3$,
\[
-\Delta u=\mu\,u+u^{\frac{d+2}{d-2}+\varepsilon}\quad\mbox{in}\quad B\,,\quad u=0\quad\mbox{on}\quad\partial B
\]
in the slightly super-critical regime, that is, for some $\varepsilon>0$, small. If $\mu>0$ is chosen small enough as $\varepsilon\to0$, then it has been shown in \cite{MR1998635,MR2065022} that the corresponding solution uniformly converges towards an Aubin-Talenti function if $d\ge4$ and, in the radial case, it is easy to deduce that the uniform convergence also holds for the derivative of the solution on the interval $[0,R(\lambda)]$, with $R(\lambda)$ defined as in Example~\ref{Ex:Critical}. Notice that the case $d=3$ deserves a special treatment and has been studied in \cite{MR2103187,MR2091668}.
\end{example}

\begin{example}\label{Ex:Critical2} We conclude our examples by a case where $p^*$ is achieved but $(SC)$ still holds. Let us consider a function $f$ satisfying $(f1)$ and $(f2)$ such that for some $s_0>\max\{2\,e,\,2\,\beta^+\}$ and $\zeta>\frac p{d-p}$, it holds that
\[
f(s)=\frac{|s|^{p^*-2}\,s}{(\log|s|)^\zeta}\quad\mbox{for all $s$ such that } |s|\ge s_0\,,
\]
where as usual $p^*=d\,p/(d-p)$. In case $d\ge 2$ we consider $q(r)=r^{d-1}$, and thus $\mu$ in \eqref{sc1} can be chosen to be $\mu^*=1/p^*$.

This example is inspired by the ground state study of \cite[Theorem 2]{fg}. Our method applies and shows the existence of sign-changing solutions with an arbitrary given number of nodes.
\end{example}

\section{Preliminary results}\label{Section:Properties}

To deal with Problem~\eqref{eq2}, we will use a shooting method and consider the initial value problem
\begin{equation}\label{ivp}
\begin{cases}
\big(q(r)\,\phi_p(v')\big)'+q(r)\,f(v)=0\,,\quad r>0\,,\\
v(0)=\lambda>0\,,\quad v'(0)=0\,.
\end{cases}
\end{equation}
To emphasize the dependence in $\lambda$, we shall denote the solution by $v_\lambda$ whenever necessary.
\begin{prop}\label{Prop:Existence-Uniqueness} Suppose that Assumption $(f1)$-$(f2)$ hold. Assume also that $q$ satisfies $(Q1)$. Then for any fixed $\lambda\in\R$, \eqref{ivp} has a solution~$v_\lambda$ defined in $[0,\infty)$. Moreover, such a solution is unique at least until it reaches a double zero or a point $\overline r$ such that $v_\lambda'(\overline r)=0$ and $f(v_\lambda(\overline r))=0$.\end{prop}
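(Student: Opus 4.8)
The plan is to prove existence and local uniqueness separately, treating the ODE $(q\,\phi_p(v'))'+q\,f(v)=0$ as a first-order system for the pair $(v,w)$ with $w:=q\,\phi_p(v')$, so that $v'=\phi_{p'}(w/q)$ and $w'=-q\,f(v)$. The difficulty near $r=0$ is that $q(0)=0$ and $1/q$ is singular, so the right-hand side is not Lipschitz (nor even continuous) at $r=0$; I would therefore first reformulate \eqref{ivp} as the integral equation $v(r)=\lambda-\int_0^r \phi_{p'}\bigl(\tfrac1{q(s)}\int_0^s q(t)\,f(v(t))\,dt\bigr)\,ds$, which already builds in $v(0)=\lambda$ and (using $(Q3)$ to control $\frac1{q(s)}\int_0^s q\,dt\le C\,s$) the condition $v'(0)=0$. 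For existence I would run a Schauder or Schauder--Tychonoff fixed-point argument on $C([0,\delta])$ for small $\delta$: the map is continuous and compact because $f$ is continuous and bounded on the relevant range of $v$, and the factor $\phi_{p'}(\cdot)$ together with the $s$-power from $(Q3)$ gives the needed equicontinuity; this yields a local solution, which then extends to all of $[0,\infty)$ by the usual continuation argument since the energy $E$ from \eqref{funct-10} controls $|v'|$ and, via $(f2)$ with $F(\pm\infty)=F(\mp\infty)$ finite, no blow-up in finite $r$ can occur (one shows $v$ and $v'$ stay bounded on bounded $r$-intervals).

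For uniqueness the key observation is that away from double zeros and away from points where simultaneously $v'=0$ and $f(v)=0$, the system is locally Lipschitz and standard Picard--Lindel\"of uniqueness applies, so the only issue is starting the solution at $r=0$ and, more delicately, passing through a simple zero of $v$ (where $v=0$ but $v'\neq0$) or through a turning point where $v'=0$ but $f(v)\neq0$. Near $r=0$: since $v(0)=\lambda$ and, by Proposition~\ref{basic1}-type reasoning (or directly from $(f2)$), $f(v)$ has a fixed sign for $r$ small, one gets $v'(r)\sim -\,\phi_{p'}\bigl(f(\lambda)\,\tfrac1{q(r)}\int_0^r q\bigr)$, which is monotone and nonzero for small $r>0$ unless $f(\lambda)=0$; if $f(\lambda)\neq0$ this pins down the solution uniquely for small $r$ by a contraction argument in a weighted norm (the nonlinearity $\phi_{p'}$ is locally Lipschitz away from $0$, and $f(\lambda)\neq0$ keeps the argument of $\phi_{p'}$ away from $0$). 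If $f(\lambda)=0$, then $\lambda$ is a constant solution, which is exactly the excluded case "$v_\lambda'(\overline r)=0$ and $f(v_\lambda(\overline r))=0$" with $\overline r=0$.

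To handle a simple zero $r=r_1$ with $v(r_1)=0$, $v'(r_1)\ne0$: here $w(r_1)=q(r_1)\,\phi_p(v'(r_1))\ne0$, so near $r_1$ the variable $w$ stays bounded away from $0$ and $v'=\phi_{p'}(w/q)$ is a locally Lipschitz function of $w$ there (since $\phi_{p'}$ is smooth away from the origin and $q>0$ on $(0,\infty)$); hence the system in $(v,w)$ is locally Lipschitz near $r_1$ and uniqueness holds across the zero. Symmetrically, near a turning point $r=r_2$ with $v'(r_2)=0$ but $f(v(r_2))\ne0$, $w(r_2)=0$ but $w'(r_2)=-q(r_2)\,f(v(r_2))\ne0$, so $w$ changes sign transversally; one uses $w$ (equivalently a rescaled variable) as the good coordinate and a Gronwall estimate on $|v_1-v_2|+|w_1-w_2|$ absorbing the only non-Lipschitz piece $|\phi_{p'}(w_1/q)-\phi_{p'}(w_2/q)|$, which near a transversal zero of $w$ is integrable against the singularity because $\phi_{p'}$ has a H\"older, integrable-type modulus there when $p'\ge1$ — more carefully, one splits into $p\le 2$ (so $p'\ge2$, $\phi_{p'}$ Lipschitz, no issue) and $p>2$ (so $1<p'<2$, and $w$ vanishing linearly makes $|\phi_{p'}(w)|\sim|w|^{p'-1}$ with $\int |w|^{p'-2}\,dr<\infty$, giving a Gronwall-type bound after one integration by parts).

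\begin{remark}
The main obstacle is precisely the uniqueness at a turning point in the genuinely degenerate case $p>2$ (equivalently $1<p'<2$), where $\phi_{p'}$ fails to be Lipschitz at $0$; the trick is that a turning point with $f(v)\ne0$ is \emph{transversal} for $w$, so $w$ behaves like $c\,(r-r_2)$ and the offending term is integrable, whereas at a double zero (both $v$ and $v'$ vanish, $w$ tangent to $0$) transversality fails — which is exactly why the statement only claims uniqueness up to the first double zero. Near $r=0$ an additional, milder obstacle is the weight singularity $1/q$, handled by the a priori bound $\frac1{q(r)}\int_0^r q\le C r$ coming from $(Q3)$. Everything else is routine ODE theory once the system is written in the $(v,w)$ variables.
\end{remark}
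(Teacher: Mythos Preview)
Your overall strategy---integral equation plus Schauder for existence, then case-by-case uniqueness in the $(v,w)$ phase variables, splitting $p\le2$ versus $p>2$ at turning points---is essentially the paper's approach, and your treatment of the turning point $v'=0$, $f(v)\ne0$ (transversal vanishing of $w$, integrable $|w|^{p'-2}$) matches the paper's argument closely. Two minor points: the bound $\tfrac1{q(s)}\int_0^s q\le s$ follows already from $(Q1)$ (monotonicity of $q$), so you need not invoke $(Q3)$; and the paper uses $w=\phi_p(v')$ rather than $w=q\,\phi_p(v')$, but this is cosmetic.

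There is, however, a genuine gap at a simple zero of $v$. You write that when $v(r_1)=0$, $v'(r_1)\ne0$, ``the system in $(v,w)$ is locally Lipschitz near $r_1$''. This is false in general: assumption $(f1)$ only gives $f$ locally Lipschitz on $\R\setminus\{0\}$, so the term $f(v)$ in the $w'$-equation need not be Lipschitz in $v$ as $v$ crosses $0$. Standard Picard--Lindel\"of therefore does not apply, and the gap is not cosmetic: sublinear nonlinearities like $f(s)\sim |s|^{q-1}s$ with $0<q<1$ near $0$ are admissible under $(f1)$--$(f2)$.

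The paper closes this gap by \emph{inverting} the solution: since $v'(r_1)\ne0$, one can write $r=r(s)$ with $v(r(s))=s$, obtaining
\[
(p-1)\,r''=\frac{q'}{q}\,r'+f(s)\,|r'|^p\,,\qquad r(0)=r_1>0,\quad r'(0)\ne0\,.
\]
The crucial gain is that $f(s)$ is now a \emph{known coefficient} in the independent variable $s$, not composed with the unknown; the right-hand side is smooth in $(r,r')$ near $(r_1,r'(0))$, and a direct Gronwall estimate (with $|F(s_0)|$ absorbing the integrated $f$) yields uniqueness. You should either reproduce this inversion or supply an alternative argument that genuinely handles non-Lipschitz $f$ at $0$.
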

A \emph{double zero} of a function $v$ means some $r>0$ such that $v(r)=v'(r)=0$. This can occur only for values of $v$ or $v'$ for which there is a regularity issue in the equation as, otherwise, the solution would be constant and trivial. Since we are not aware of a reference for the results of Proposition~\ref{Prop:Existence-Uniqueness}, we will sketch a proof. However, this is not central for our paper and requires notations that will be introduced later, so we will postpone it in Appendix~\ref{Section:Uniqueness}.

\medskip The following proposition collects several properties of the solution $v$ to \eqref{ivp}. Properties $(iii)$ and $(iv)$ extend properties that were used in \cite{dghm} and provide us with some understanding of the classification of all solutions. We will come back to this in Section~\ref{Section:final}.
\begin{prop}\label{basic1} Let $v$ be a solution of \eqref{ivp} for some $\lambda>0$, with $q$ and $f$ satisfying $(Q1)$-$(Q3)$ and $(f1)$-$(f2)$ respectively, and consider the energy $E$ defined by \eqref{funct-10}.
\begin{enumerate}
\item[$(i)$] The energy $E$ is nonincreasing and bounded, hence $\lim_{r\to+\infty}E(r)=:\mathcal E$ is finite.
\item[$(ii)$] There exists $C_{\lambda}>0$ such that $|v(r)|+|v'(r)|\le C_{\lambda}$ for all $t\ge 0$.
\item[$(iii)$] If $v$ reaches a double zero at some point $r_0>0$, then $v$ does not change sign on $[r_0,\infty)$. Moreover, if $v\not\equiv 0$ for $r\ge r_0$, then there exists $r_1\ge r_0$ such that $v(r)\neq0$, and $E(r)<0$ for all $r>r_1$ and $v\equiv 0$ on $[r_0,r_1]$.
\item[$(iv)$]If $\lim\limits_{r\to+\infty}v(r)=\ell$ exists, then $\ell$ is a zero of $f$ and $\lim\limits_{r\to \infty}v'(r)=0$.
\end{enumerate}
\end{prop}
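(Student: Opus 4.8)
The plan is to derive everything from a single computation: the energy $E$ defined in~\eqref{funct-10} is dissipated at a rate controlled by the weight. Differentiating $E(r)=\frac{|v'(r)|^p}{p'}+F(v(r))$ and using that $v$ solves~\eqref{ivp}, one gets $E'(r)=\phi_p(v'(r))\,v''(r)+f(v(r))\,v'(r)$. Writing the equation in the form $q\,\phi_p(v')'= -q'\,\phi_p(v')-q\,f(v)$, hence $\phi_p(v')' = -\frac{q'}{q}\phi_p(v') - f(v)$, and noting that $\phi_p(v')' = (p-1)|v'|^{p-2}v'' = \tfrac{p}{p'}|v'|^{p-2}v''$, a short manipulation gives
\[
E'(r) = -\,\frac{q'(r)}{q(r)}\,|v'(r)|^{p}\le 0
\]
for $r>0$, by $(Q1)$ (since $q'>0$, $q>0$ on $(0,\infty)$). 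This proves that $E$ is nonincreasing on $(0,\infty)$; it is also continuous up to $0$ (with $E(0)=F(\lambda)$) because $v\in C^1$ and $v'(0)=0$. For boundedness from below I would use $(f2)$: since $F(\infty)=F(-\infty)$ are both finite and $F$ is continuous with $F<0$ on $(\beta^-,\beta^+)\setminus\{0\}$ and $F\ge 0$ outside, $F$ is bounded below on all of $\R$, say $F\ge -M$. Then $E(r)\ge -M$, so the monotone limit $\mathcal E:=\lim_{r\to\infty}E(r)$ exists and is finite; this is $(i)$. For $(ii)$, from $E(r)\le E(0)=F(\lambda)$ we get $\frac{|v'(r)|^p}{p'}\le F(\lambda)+M$, bounding $v'$ uniformly. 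To bound $v$ itself one integrates: either $|v(r)|$ stays in the bounded set $\{F\le F(\lambda)\}$ — which is bounded because $F(\pm\infty)>F(\beta^\pm)=0$ forces $\{F\le c\}$ to be bounded for $c$ below the common value $F(\pm\infty)$, and one checks $F(\lambda)$ is in that range or else argues directly — or, more robustly, one notes $v$ cannot escape to $\pm\infty$ since that would force $E\to+\infty$ via the $F$ term once $|v|$ is large (as $F(s)\to F(\pm\infty)$, a finite positive value, while the kinetic term is $\ge 0$, this needs the set $\{F\le F(\lambda)\}$ to actually be bounded). The cleanest route: show $\{s:F(s)\le F(\lambda)\}$ is a bounded interval when $F(\lambda)<F(\pm\infty)$, and handle the case $F(\lambda)\ge F(\pm\infty)$ separately using that once $|v|$ is so large that $F(v)$ is close to its sup, $|v'|$ is bounded and a Gronwall/continuity argument on a finite $r$-interval suffices together with $\lim_{r\to\infty}E$ finite; I expect this bookkeeping around the shape of $F$ to be the only mildly delicate point.

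For $(iii)$, suppose $v(r_0)=v'(r_0)=0$ with $r_0>0$. Then $E(r_0)=F(0)=0$, and since $E$ is nonincreasing, $E(r)\le 0$ for all $r\ge r_0$. If $v$ took both a positive and a negative value for $r\ge r_0$, by continuity it would pass through a local extremum at some $r_*$ with $v'(r_*)=0$ and $v(r_*)\in(\beta^-,\beta^+)\setminus\{0\}$ (it cannot immediately reach $|v|\ge\beta^\pm$ if $E\le 0$, since $E(r_*)=F(v(r_*))$ there, forcing $F(v(r_*))\le 0$, i.e. $v(r_*)\in[\beta^-,\beta^+]$); but then $F(v(r_*))<0$ strictly unless $v(r_*)=0$, and one uses the uniqueness statement of Proposition~\ref{Prop:Existence-Uniqueness} together with the structure of the equation to rule out sign change — more concretely, at a zero $\bar r\ge r_0$ of $v$ we have $E(\bar r)=\frac{|v'(\bar r)|^p}{p'}\le 0$, forcing $v'(\bar r)=0$, hence every zero of $v$ on $[r_0,\infty)$ is a double zero, and a connectedness argument shows $v$ cannot change sign (crossing zero requires $v'\ne 0$ there). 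So $v$ has constant sign on $[r_0,\infty)$. If moreover $v\not\equiv 0$ on $[r_0,\infty)$, let $r_1:=\sup\{r\ge r_0: v\equiv 0 \text{ on } [r_0,r]\}$; on $(r_1,\infty)$ we have $v\ne 0$ with constant sign, and $E(r)\le E(r_1)=0$; if $E(r_1)=0$ were attained with $E(r)=0$ persisting, then $|v'|^p/p' = -F(v)$ would be $>0$ once $v\ne 0$ (as $F(v)<0$ there while $|v|<\beta^+$), so actually $E$ is strictly decreasing the moment $v'\ne 0$, giving $E(r)<0$ for $r>r_1$; one must also confirm $v$ does not vanish again past $r_1$, which follows since a later zero would again be a double zero and, by uniqueness and the fact that $v\equiv 0$ solves the problem from a double zero only when $f$ is not Lipschitz there, contradicts $v\not\equiv 0$ — this is exactly the subtlety flagged after Proposition~\ref{Prop:Existence-Uniqueness}. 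This is the step I expect to be the main obstacle: carefully combining the sign of $E$, the fact that all zeros on $[r_0,\infty)$ are double, and the uniqueness result to pin down the precise structure "$v\equiv 0$ on $[r_0,r_1]$ then $v\ne 0$ with $E<0$ afterward."

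For $(iv)$, assume $\ell:=\lim_{r\to\infty}v(r)$ exists. From $(i)$, $E(r)\to\mathcal E$, and since $F$ is continuous, $F(v(r))\to F(\ell)$; hence $|v'(r)|^p/p' \to \mathcal E - F(\ell)=:L\ge 0$, so $|v'(r)|$ tends to a constant limit $(p'L)^{1/p}$. If that limit were positive, $v$ would change by an unbounded amount, contradicting convergence of $v$; thus $v'(r)\to 0$ and $L=0$, i.e. $\mathcal E=F(\ell)$. It remains to see $f(\ell)=0$. Integrate the equation from a large $R$: $q(r)\phi_p(v'(r)) - q(R)\phi_p(v'(R)) = -\int_R^r q(s)f(v(s))\,ds$. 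If $f(\ell)\ne 0$, then $f(v(s))$ is bounded away from $0$ for large $s$ with a fixed sign, so the right side behaves like (a constant with fixed sign) times $\int_R^r q(s)\,ds = Q(r)-Q(R)$, which tends to $+\infty$ in absolute value since $Q(r)=\int_0^r q\to\infty$ (as $q(r)\to\infty$, shown in the discussion after $(Q4)$); dividing by $q(r)$ and using $(Q3)$ (which gives $Q(r)/q(r)$ bounded below away from $0$, in fact $Q(r)/q(r)\ge r/(C_2+1)$ or similar from $rq'/q\le C_2$) forces $|\phi_p(v'(r))|=|v'(r)|^{p-1}$ to stay bounded away from $0$, contradicting $v'(r)\to 0$. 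Hence $f(\ell)=0$, completing $(iv)$. The only assumptions used beyond $(Q1)$–$(Q3)$ and $(f1)$–$(f2)$ are the elementary consequences $q(r)\to\infty$ and $Q(r)/q(r)\to\infty$ already recorded in the text.
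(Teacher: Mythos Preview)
Your argument for (i)--(iii) tracks the paper's proof closely: the identity $E'=-\frac{q'}{q}\,|v'|^p$, the lower bound on $F$ (which needs only that $F<0$ precisely on the compact set $[\beta^-,\beta^+]$; your aside that $F(\pm\infty)$ is finite is neither assumed in $(f2)$ nor required), and the key observation that $E=\frac{|v'|^p}{p'}\ge 0$ at any zero of $v$. In (iii) the correct mechanism, which you do state, is simply that once $E(r)<0$ the function $v$ cannot vanish again. Your subsequent detour through Proposition~\ref{Prop:Existence-Uniqueness} is both unnecessary and unreliable, since uniqueness is exactly what fails at double zeros; drop it. The paper instead takes $r_1$ as the infimum of the $r$'s with $E(r)<0$: then on $[r_0,r_1]$ one has $E\equiv 0$, hence $E'\equiv 0$, hence $v'\equiv 0$, hence $v\equiv v(r_0)=0$; and for $r>r_1$ one has $E(r)<0$, which forces $v(r)\ne 0$. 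This is cleaner than defining $r_1$ via the zero set of $v$.

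For (iv) you take a genuinely different route from the paper. You first deduce $v'\to 0$ from the convergence of $|v'|^p=p'\big(E-F(v)\big)$ together with $v\to\ell$, and then exclude $f(\ell)\ne 0$ by integrating the equation and using $Q(r)/q(r)\gtrsim r\to\infty$ (a consequence of $(Q3)$). The paper reverses the order: it applies L'H\^opital's rule twice to the quotient $\dfrac{v(r)-\ell}{\int_0^r\phi_{p'}(Q/q)\,ds}$ to obtain $0=-\phi_{p'}(f(\ell))$, hence $f(\ell)=0$, and only afterward reads off $v'\to 0$ from the convergence of $E$. Both arguments are short and correct; yours is more elementary and sidesteps the bookkeeping of the double L'H\^opital, while the paper's bundles both conclusions into a single limit computation.
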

\begin{proof} Let $v$ be any solution of \eqref{ivp}. As
\beq\label{I'}
E'(r)=-\,\frac{q'(r)}{q(r)}\,|v'(r)|^p\,,
\eeq
by $(Q1)$, we have that $E$ is decreasing in $r$ implying that $F(v(r))\le E(r)\le F(\lambda)$ for all $r>0$. By $(f2)$, there exists a positive constant $\overline F$ such that $F(s)\ge -\,\overline F$ for all $s\in\R$ and hence $E(r)\ge -\,\overline F$ for all $r\ge0$, thus $(i)$ and $(ii)$ follow.

Assume next that $v$ reaches a double zero at some point $r_0>0$. Then $E(r_0)=0$ implying that $E(r)\le 0$ for all $r\ge r_0$. If $v$ is not constantly equal to $0$ for $r\ge r_0$, then $E(r_1)<0$ for some $r_1>r_0$ and thus, by monotonicity of $E$, $E(r)<0$ for all $r\ge r_1$. Moreover $v$ cannot have the value $0$ again (because at the zeros of $v$ we have $E\ge 0$). This proves $(iii)$ by taking the infimum on all $r_1$ with the above properties.

Finally, assume that $\lim_{r\to+\infty}v(r)=\ell$, and recall that $Q(r):=\int_0^rq(s)\,ds$. From $(Q3)$, we know that $\lim_{r\to+\infty}q(r)=+\infty$ and $\lim_{r\to+\infty}\int_0^r\phi_{p'}(Q/q)\,ds=+\infty$. Using~\eqref{ivp} and applying L'H\^opital's rule twice, we obtain that
\begin{multline*}
0=\lim_{r\to+\infty}\frac{v(r)-\ell}{\int_0^r\phi_{p'}(Q/q)\;ds}= \lim_{r\to+\infty}\frac{v'(r)}{\phi_{p'}(Q/q)(r)}=\phi_{p'}\Bigl(\lim_{r\to+\infty}\frac{q(r)\,\phi_p(v'(r))}{Q(r)}\Bigr)\\
=-\,\phi_{p'}\Bigl(\lim_{r\to+\infty}\frac{q(r)\,f(v(r))}{q(r)}\Bigr)=-\,\phi_{p'}(f(\ell))\,.
\end{multline*}

Next, from the definition of $E$ in~\eqref{funct-10}, it follows that $\lim_{r\to+\infty}|v'(r)|=\big(p'\,(\mathcal E-F(\ell)\big)^{1/p}$ exists. As $\ell=\lim_{r\to+\infty}v(r)$ also exists, we must have that $\lim_{r\to \infty}v'(r)=0$. \end{proof}

\begin{prop}[Asymptotic Hamiltonian system]\label{Pmm1} Let $q$ and $f$ satisfy $(Q1)$-$(Q3)$ and $(f1)$-$(f2)$ respectively, and let $v$ be a solution of \eqref{ivp}. Let $\{r_n\}$ be any sequence in $[0,\infty)$ that tends to $\infty$ as $n\to+\infty$ and define the sequence of real functions $\{v_n\}$ by
\[
v_n(r)=v(r+r_n)\,.
\]
Then $\{v_n\}$ contains a subsequence, still denoted the same, such that $v_n$ and $v_n'$ converge pointwise to a continuous function~$v_\infty$ and $v_\infty'$ respectively, with uniform convergence on compact sets of $[0,\infty)$. Furthermore the function $v_\infty$ is a solution to the asymptotic equation
\be\label{asym12}
\big(\phi_p(v')\big)'+f(v)=0\,,\quad r\in[0,\infty)\,.
\ee
\end{prop}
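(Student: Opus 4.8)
The plan is to prove convergence of the rescaled sequence $\{v_n\}$ by extracting a subsequence via Arzel\`a--Ascoli, and then to identify the limit as a solution of the autonomous equation~\eqref{asym12} by passing to the limit in the integral formulation of~\eqref{ivp}. First I would record the uniform bounds: by Proposition~\ref{basic1}$(ii)$ there is a constant $C_\lambda>0$ with $|v(r)|+|v'(r)|\le C_\lambda$ for all $r\ge 0$, so the families $\{v_n\}$ and $\{v_n'\}$ are uniformly bounded on $[0,\infty)$. To get equicontinuity of $\{v_n\}$ we simply note $|v_n(r)-v_n(s)|\le C_\lambda|r-s|$, so $\{v_n\}$ is uniformly Lipschitz. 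For $\{v_n'\}$ I would go back to the equation: integrating $(q\,\phi_p(v'))'=-q\,f(v)$ from $r_n$ to $r_n+r$ gives
\[
q(r_n+r)\,\phi_p(v_n'(r))=q(r_n)\,\phi_p(v_n'(0))-\int_0^r q(r_n+t)\,f(v_n(t))\,dt\,,
\]
hence
\[
\phi_p(v_n'(r))=\frac{q(r_n)}{q(r_n+r)}\,\phi_p(v_n'(0))-\int_0^r\frac{q(r_n+t)}{q(r_n+r)}\,f(v_n(t))\,dt\,.
\]
On a compact set $[0,M]$ the ratios $q(r_n+t)/q(r_n+r)$ lie between $q(r_n)/q(r_n+M)$ and $q(r_n+M)/q(r_n)$, and using $(Q3)$ (or rather the two-sided bound $t^{C_1}q(s)\le s^{C_1}q(t)$ derived from it for $s\le t$, together with the analogous upper bound from $C_2$) one sees $q(r_n+M)/q(r_n)\le\big((r_n+M)/r_n\big)^{C_2}\to 1$ as $n\to\infty$, uniformly for the arguments in $[0,M]$. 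So these ratios are bounded uniformly in $n$ (and in fact tend to $1$), and since $f$ is bounded on the compact range of $v_n$ the right-hand side is Lipschitz in $r$ with a constant independent of $n$; since $\phi_{p'}$ is continuous and $|v_n'|\le C_\lambda$, this transfers to equicontinuity of $\{v_n'\}$ on $[0,M]$. By Arzel\`a--Ascoli and a diagonal argument over $M\in\N$ we extract a subsequence with $v_n\to v_\infty$ and $v_n'\to w$ uniformly on compact sets, $v_\infty, w\in C^0([0,\infty))$.

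Next I would check $w=v_\infty'$: from $v_n(r)=v_n(0)+\int_0^r v_n'(t)\,dt$ and uniform convergence, $v_\infty(r)=v_\infty(0)+\int_0^r w(t)\,dt$, so $v_\infty\in C^1$ and $v_\infty'=w$. Finally, to identify the equation, I pass to the limit $n\to\infty$ in the displayed identity for $\phi_p(v_n'(r))$: the first term tends to $0$ because $q(r_n)/q(r_n+r)\to 1$ while $|\phi_p(v_n'(0))|\le C_\lambda^{p-1}$ — wait, that term does not obviously vanish; instead I should write $\phi_p(v_n'(0))=\phi_p(v_\infty'(0))+o(1)$ and the coefficient $q(r_n)/q(r_n+r)\to 1$, so the first term tends to $\phi_p(v_\infty'(0))$; and in the integral, $q(r_n+t)/q(r_n+r)\to 1$ uniformly on $[0,M]$ while $f(v_n(t))\to f(v_\infty(t))$ uniformly (by continuity of $f$ and uniform convergence $v_n\to v_\infty$), so by dominated convergence
\[
\phi_p(v_\infty'(r))=\phi_p(v_\infty'(0))-\int_0^r f(v_\infty(t))\,dt\,.
\]
This is the integrated form of $(\phi_p(v_\infty'))'+f(v_\infty)=0$; since the right-hand side is $C^1$ in $r$, $\phi_p(v_\infty')\in C^1$ and~\eqref{asym12} holds classically on $[0,\infty)$.

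The main obstacle is the uniform control of the quotients $q(r_n+t)/q(r_n+r)$ as $n\to\infty$: this is where $(Q3)$ is essential, and one must be a little careful that the estimate is uniform for $t,r$ ranging over a fixed compact $[0,M]$ while $r_n\to\infty$. The inequality $t^{C_1}q(s)\le s^{C_1}q(t)$ (and its companion with $C_2$) obtained by integrating $(Q3)$ does the job cleanly, giving $\big(r_n/(r_n+M)\big)^{C_2}\le q(r_n+t)/q(r_n+r)\le\big((r_n+M)/r_n\big)^{C_2}$, both bounds tending to $1$. Everything else — Arzel\`a--Ascoli, the diagonal extraction, passing to the limit under the integral sign — is routine once the equicontinuity of $\{v_n'\}$ and this ratio control are in hand.
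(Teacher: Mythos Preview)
Your argument is correct and follows essentially the same route as the paper's proof: uniform bounds from Proposition~\ref{basic1}, Arzel\`a--Ascoli, the integral identity for $\phi_p(v_n')$, and the key observation from $(Q3)$ that $q(r_n+t)/q(r_n+r)\to 1$ uniformly for $t,r$ in compact sets. The only organizational difference is that you establish equicontinuity of $\{v_n'\}$ directly and apply Arzel\`a--Ascoli to both sequences, whereas the paper applies Ascoli only to $\{v_n\}$, passes to a further subsequence so that $\phi_p(v_n'(0))$ converges, and then reads off the uniform convergence of $v_n'$ from the integral formula; this is a matter of taste, not substance.
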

\begin{proof} We know that there exist two positive constants $c_\lambda^1$ and $c_\lambda^2$ such that
\[
v(r)\leq c_\lambda^1\,,\quad v'(r)\leq c_\lambda^2\,,\quad\text{for all }r\ge 0
\]
and, as a consequence,
\[
v_n(r)\leq c_\lambda^1\,,\quad v_n'(r)\leq c_\lambda^2\,,\quad\text{for all }r\ge 0
\]
for any $n\in\N$. Hence $\{v_n\}$ is equicontinuous: for any $r$, $s>0$ and for all $n\in\N$,
\[
|v_n(s)-v_n(r)| \leq c_\lambda^2\,|s-r|\,.
\]
Then, from Ascoli's theorem (see \cite[Theorem 30]{MR1013117}), $\{v_n\}$ contains a subsequence, still denoted the same, that converges pointwise to a continuous function $v_\infty$, with uniform convergence on compact sets of $[0,\infty)$.

It is clear that each function $v_n$ satisfies
\[
\big(q(r+r_n)\,\phi_p(v_n')\big)'+q(r+r_n)\,f(v_n(r))=0\,,
\]
and hence
\[
\phi_p(v_n'(r))=\frac{q(r_n)}{q(r+r_n)}\,\phi_p(v_n'(0))-\int_0^r\dis\frac{q(s+r_n)}{q(r+r_n)}\,f(v_n(s))\;ds=0\,.
\]
From $(Q3)$, it follows that
\[
\Bigl(\frac{s+r_n}{r+r_n}\Bigr)^{C_1}\le \frac{q(s+r_n)}{q(r+r_n)}\le \Bigl(\frac{s+r_n}{r+r_n}\Bigr)^{C_2}\,,
\]
hence, for any given $r$, $s\ge0$,
\[
\lim_{n\to+\infty}\frac{q(s+r_n)}{q(r+r_n)}=1\,.
\]
By passing to a subsequence if necessary we can assume that $\lim_{n\to \infty}\phi_p(v_n'(0))=a$. Let us choose some $T>0$. Since $\{f(v_n)\}$ uniformly converges in $[0,T]$ to $f(v_\infty)$, we find that $v_n'$ uniformly converges to a continuous function $z$ given by
\[
z(r)=\phi_{p'}\Bigl(a-\int_0^r f(u_\lambda^\infty(s))\;ds\Bigl)\,.
\]
Hence $z'$ exists and is continuous. Furthermore from
\[
v_n(r)=v_n(0) +\int_0^rv'_n(s)\;ds\,,
\]
by letting $n\to+\infty$, we obtain that
\[
v_\infty(r)=v_\infty(0)+\int_0^r z(s)\;ds\,.
\]
Hence $v_\infty$ is continuously differentiable, $v_\infty'(r)=z'(r)$, for all $r\in [0,T]$ and
\[
\phi_p(v_\infty'(r))=a-\int_0^rf(v_\infty(s))\;ds
\]
implies first that $a=\phi_p(v_\infty'(0))$, and then that
\[
(\phi_p(v_\infty'(r)))'+ f(v_\infty(r))=0\,.
\]
This argument shows that $v_\infty$ is a solution to~\eqref{asym12} for all $r\in [0,\infty)$.\end{proof}

\begin{prop}\label{basic3} Let $q$ and $f$ satisfy $(Q1)$-$(Q3)$ and $(f1)$-$(f2)$ respectively, and let $v$ be a solution of \eqref{ivp}. Then $\mathcal E=\lim_{r\to \infty} E(r)=F(\ell)\le0$ and $\ell$ is a zero of $f$.\end{prop}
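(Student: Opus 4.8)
The statement asserts that $\mathcal E=\lim_{r\to\infty}E(r)$ equals $F(\ell)$ for some zero $\ell$ of $f$, where $\ell$ should be interpreted as (a value attained in) the $\omega$-limit set of $v$. Since Proposition~\ref{basic1}$(i)$ already gives that $\mathcal E$ exists and is finite, and Proposition~\ref{basic1}$(iv)$ handles the case in which $\lim_{r\to\infty}v(r)=\ell$ genuinely exists, the only real work is to show that the limit of $v$ necessarily exists, i.e.\ that $v$ cannot oscillate persistently at infinity, and then to pin down the sign of $F(\ell)$.

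The plan is as follows. First I would use the compactness result Proposition~\ref{Pmm1}: along any sequence $r_n\to\infty$, a subsequence of the shifts $v_n(r)=v(r+r_n)$ converges in $C^1_{\rm loc}$ to a solution $v_\infty$ of the autonomous equation $(\phi_p(v_\infty'))'+f(v_\infty)=0$. The autonomous energy $\tfrac1{p'}|v_\infty'|^p+F(v_\infty)$ is conserved, and by $C^1_{\rm loc}$ convergence together with $E'(r)=-\tfrac{q'}{q}|v'|^p\to 0$ in an integrated sense (more precisely, $\int_0^\infty \tfrac{q'}{q}|v'|^p\,dr<\infty$ because $E$ is bounded below and monotone, and by $(Q3)$ $\tfrac{q'}{q}\ge C_1/r$... so one must be a bit careful here), the limiting energy of $v_\infty$ must equal $\mathcal E$ and must in fact be constant in $r$ at the value $\mathcal E$. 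The key point to extract is that the conserved energy of $v_\infty$ is forced to be $\mathcal E$ regardless of the chosen sequence $r_n$.

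Next I would argue that $v_\infty$ must be constant. The cleaner route: since $\int_0^\infty \tfrac{q'}{q}|v'|^p\,dr<\infty$ and $\tfrac{q'}{q}$ decays at most like $C_2/r$ while being at least $C_1/r$, one gets enough decay of $\|v'\|$ in an averaged sense that the limiting profile $v_\infty$ satisfies $v_\infty'\equiv 0$; indeed if $v_\infty'(r_0)\ne 0$ at some point, then by uniform $C^1$ convergence $|v_n'|$ is bounded below by a positive constant on a fixed interval around $r_0$ for infinitely many $n$, contradicting $\int_{r_n}^{r_n+T}\tfrac{q'}{q}|v'|^p\,dr\to 0$ (using $\tfrac{q'}{q}\ge C_1/(r+T+r_n)$, whose integral over that interval is bounded below only by $\sim C_1 T/r_n\to 0$ — so this naive estimate is \emph{not} quite enough). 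This is exactly the subtle point: $\tfrac{q'}{q}$ may be too small at infinity for the energy-dissipation integral alone to kill the oscillation, so instead I would use the autonomous structure directly: $v_\infty$ solves the Hamiltonian system, its orbit in the $(v,\phi_p(v'))$ plane is a level set $\{\tfrac1{p'}|w|^{p'}+F(s)=\mathcal E\}$ where $w=\phi_p(v')$, and the whole $\omega$-limit set of $v$ is the union of such orbits all at the \emph{same} energy $\mathcal E$. If $\mathcal E$ is a regular value corresponding to a nontrivial periodic orbit, then $v$ would have to converge to that genuinely periodic motion, but then $E(r)\to\mathcal E$ along $v$ forces (via $E'=-\tfrac{q'}{q}|v'|^p$ and the fact that on a periodic orbit $|v'|$ is not identically zero) a net energy loss over each period that does not tend to zero — here one does get a contradiction because over a \emph{fixed-length} period the relevant integral of $\tfrac{q'}{q}|v_\infty'|^p$ against the (now $r$-independent, strictly positive away from turning points) profile is a fixed positive number, times $\tfrac{q'}{q}$ evaluated near $r_n$; summing over consecutive periods $r_n, r_n+T, r_n+2T,\dots$ and using $\sum_k \tfrac{q'}{q}(r_n+kT)\gtrsim \sum_k C_1/(r_n+kT)=\infty$ gives $E\to-\infty$, contradicting boundedness. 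Hence the $\omega$-limit set contains no nontrivial periodic orbit, so it consists of equilibria, i.e.\ points $(\ell,0)$ with $f(\ell)=0$, all at energy $F(\ell)=\mathcal E$; connectedness of the $\omega$-limit set plus the fact that the zero set of $f$ restricted to a level $\{F=\mathcal E\}$ is discrete (by $(f1)$, local Lipschitz, and $(f2)$) forces a single point, so $\lim_{r\to\infty}v(r)=\ell$ exists. Then Proposition~\ref{basic1}$(iv)$ applies and gives $f(\ell)=0$ and $v'\to 0$, whence $\mathcal E=F(\ell)$.

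Finally, for the sign: $\mathcal E=F(\ell)$ and $\ell$ is a zero of $f$. By $(f2)$, the zeros of $f$ are: $0$ (with $F(0)=0$), points in $[\beta^+,\infty)$ or $(-\infty,\beta^-]$; but on $[\beta^+,\infty)$ we have $f>0$ so the only zero there would be $\beta^+$ itself only if $f(\beta^+)=0$, and $F(\beta^+)=0$; similarly $F(\beta^-)=0$; any zero $\ell$ strictly inside $(\beta^-,\beta^+)$ has $F(\ell)<0$ unless $\ell=0$. In all cases $F(\ell)\le 0$. (If $f(s_*)=0$ for some $s_*>\beta^+$ then $f>0$ on $[\beta^+,s_*)$ is violated, so no such zero exists; analogously on the negative side. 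Hence $\ell\in\{0\}\cup(\beta^-,\beta^+)\cup\{\beta^\pm\}$ and $F(\ell)\le 0$.) Therefore $\mathcal E=F(\ell)\le 0$, completing the proof. The main obstacle, as indicated, is ruling out persistent oscillation when $q'/q$ decays at infinity: the slick fix is the telescoping/summation argument over a fixed period using the lower bound $q'/q\ge C_1/r$ from $(Q3)$, which turns the finite total energy into a contradiction with the divergence of $\sum 1/r$.
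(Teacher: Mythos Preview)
You have misread what the proposition actually asserts. It does \emph{not} claim that $\lim_{r\to\infty}v(r)$ exists; the symbol $\ell$ here is merely some zero of $f$ with $F(\ell)=\mathcal E$, obtained as a subsequential limit. The paper's proof is accordingly much shorter than yours: from the finiteness of
\[
E(k_0T)-\mathcal E=\int_{k_0T}^\infty\frac{q'}{q}\,|v'|^p\,dr\ \ge\ C_1\sum_{k\ge k_0}\frac{1}{(k+1)T}\int_0^T|v'(r+kT)|^p\,dr
\]
and the divergence of $\sum 1/k$, one immediately extracts a subsequence $n_k$ with $\int_0^T|v'(r+n_kT)|^p\,dr\to 0$. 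Passing to the $C^1_{\rm loc}$ limit via Proposition~\ref{Pmm1} gives a \emph{constant} limit $v_\infty\equiv\bar v$; the autonomous equation then forces $f(\bar v)=0$, and $E(r+n_kT)\to\mathcal E$ gives $\mathcal E=F(\bar v)\le 0$ by $(f2)$. That is the whole argument. Your telescoping/summation idea is exactly the right one, but the paper uses it \emph{directly} to locate a good subsequence, not in contrapositive form to rule out limit cycles.

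Your attempt to prove the stronger statement that $\lim_{r\to\infty}v(r)$ exists has genuine gaps. First, the claim that the zeros of $f$ on the level $\{F=\mathcal E\}$ are discrete is not justified by $(f1)$--$(f2)$: nothing prevents $f$ from vanishing on an entire subinterval of $(0,\beta^+)$, in which case $F$ is constant there and the equilibria at that level form a continuum, so connectedness of the $\omega$-limit set does not single out one point. Second, the implication ``no periodic orbit in the $\omega$-limit set $\Rightarrow$ the $\omega$-limit set consists of equilibria'' is false for planar Hamiltonian level sets in general: the $\omega$-limit set could be a homoclinic loop or a heteroclinic cycle (equilibria joined by connecting orbits), and your summation argument does not exclude those, since along such orbits $|v'|$ is not uniformly bounded away from zero over translates of a fixed window. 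Finally, note that the paper itself invokes an \emph{additional} hypothesis $(H)$ later (Theorem~\ref{basic2bis}) precisely to rule out oscillation; under only $(Q1)$--$(Q3)$ and $(f1)$--$(f2)$ the existence of $\lim_{r\to\infty}v(r)$ is not established in the paper, and your sketch does not close that gap either.
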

\begin{proof} Let $T>0$ be arbitrary but fixed. Then
\begin{multline*}
E(k_0\,T)-\mathcal E=\int_{k_0T}^\infty\frac{q'(r)}{q(r)}\,|v'|^p\;dr
=\sum_{k=k_0}^\infty\int_{k\,T}^{(k+1)\,T}\frac{q'(r)}{q(r)}\,|v'|^p\;dr\\
\ge C_1\sum_{k=k_0}^\infty\int_{0}^{T}\frac{|v'(r+k\,T)|^p}{r+k\,T}\;dr
\ge C_1\sum_{k=k_0}^\infty\frac1{(k+1)\,T}\int_{0}^{T}\,|v'(r+k\,T)|^p\;dr
\end{multline*}
where $C_1$ is defined in $(Q3)$. As the left hand side of this inequality is finite, it must be that $\liminf_{k\to+\infty}\int_{0}^{T}\,|v'(r+k\,T)|^p\;dr=0$, hence there is a subsequence $\{n_k\}$ of natural numbers such that $\lim_{k\to+\infty}\int_{0}^{T}\,|v'(r+n_k\,T)|^p\;dr=0$. From Proposition~\ref{Pmm1}, $v_k(r):=v(r+n_k\,T)$ has a subsequence, still denoted the same, such that
\[
\lim_{k\to+\infty}v_k(r)=v_\infty(r)\quad\mbox{and}\quad\lim_{k\to+\infty}v_k'(r)=v_\infty'(r)
\]
uniformly in compact intervals, where $v_\infty$ is a solution of
\[
\big(\phi_p(v_\infty')\big)'+f(v_\infty)=0\,.
\]
Since $\int_0^T|v_\infty'(s)|^p\,ds=0$, $v_\infty$ is a constant, say $v_\infty(r)\equiv \overline v$ and $f(\overline v)=0$. On the other hand,
\[
\frac1{p'}\,|v_k'(r)|^p+F(v_k(r))=\frac1{p'}\,|v'(r+n_k\,T)|^p+F(v(r+n_k\,T))=E(r+n_k\,T)\to\mathcal E
\]
as $k\to+\infty$ and thus, from $(f2)$,
\[
\mathcal E=F(\overline v)\le0\,.
\]
\end{proof}

\begin{remark}\label{rlambda} The monotonicity of $E$ and Proposition \ref{basic3} justify the definition of $r_\lambda(a)$ in \eqref{defrl}. For any $\lambda>\beta^+$, the solution $v_\lambda$ is uniquely defined in $[0,r_\lambda(0))$. Indeed, from Proposition~\ref{Prop:Existence-Uniqueness}, besides double zeros, uniqueness can be lost only at points $r_0$ where $v'(r_0)=0$ and \hbox{$f(v(r_0))=0$} with $v(r_0)\not=0$, and at this kind of point, by $(f2)$, $E(r_0)=F(v(r_0))<0$, hence $r_\lambda(0)<r_0$. Also, $E$ is {\it strictly} decreasing in $[0,r_\lambda(0))$. Finally, we note that if $a>0$, this infimum is a minimum, \emph{i.e.}, it is attained.\end{remark}

We end this section by giving a sufficient condition so that solutions $v_\lambda$ to \eqref{ivp} cannot be oscillatory. This was done in \cite{dghm} under the assumption that $\int_0\frac{ds}{|F(s)|^{1/p}}<\infty$ and without weights. Now we present a result valid for any $f$ satisfying only the structural assumptions $(f1)$-$(f2)$, regardless of the support, but under a slightly stronger assumption than $(Q4)$ on the weight $q$, namely assumption $(H)$ below. We note that this assumption is satisfied for $q(r)=r^{N-1}$ whenever $N>p$, hence it is a true improvement of \cite[Proposition 3.2]{dghm}.
\begin{thm}\label{basic2bis} Let $f$ and $q$ satisfy $(f1)$-$(f2)$, $(Q1)$-$(Q3)$ respectively and assume furthermore that
\begin{itemize}
\item [$(H)$] \emph{Either there exists $\varepsilon>0$ such that $s\mapsto|F(s)|^{-1/p}$ is integrable $(-\varepsilon,\varepsilon)$ or $h$ defined by \eqref{h} is such that $h'$ is non-decreasing and $\lim_{r\to+\infty}h'(r)=+\infty$.}
\end{itemize}
If~$v_\lambda$ solves~\eqref{ivp}, then it has at most a finite number of sign changes. \end{thm}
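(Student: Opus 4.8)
The plan is to argue by contradiction. Suppose $v_\lambda$ changes sign infinitely many times. By Proposition~\ref{basic1}\,$(iii)$ this rules out a double zero, so $v_\lambda$ has an infinite, strictly increasing sequence of simple zeros $r_1<r_2<\cdots$, and since $E$ is non-increasing and bounded below (Proposition~\ref{basic1}\,$(i)$), $E(r_k)\to\mathcal E$. At a zero $r_k$ of $v_\lambda$ we have $E(r_k)=\tfrac1{p'}|v_\lambda'(r_k)|^p\ge 0$, so $\mathcal E\ge 0$; combined with Proposition~\ref{basic3}, which gives $\mathcal E=F(\ell)\le 0$, we get $\mathcal E=0$ and $F(\ell)=0$, forcing $\ell\in\{0,\beta^+,\beta^-\}$ and in fact $\ell=0$ (an infinite oscillation cannot have a nonzero limit; alternatively, the limit of $v_\lambda$ need not exist, but in any case $E(r)\to 0$ and hence $v_\lambda'(r_k)\to 0$, and between consecutive zeros $v_\lambda$ reaches an extremum $m_k$ with $F(m_k)\le E(r_k)\to 0$, forcing $m_k\to 0$ by $(f2)$). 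So the oscillation is asymptotically confined to the interval $(\beta^-,\beta^+)$ where $F<0$ away from $0$, and its amplitude tends to $0$.

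Now I split according to the two alternatives in $(H)$.

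\emph{First alternative: $|F|^{-1/p}$ integrable near $0$.} Here I reproduce the mechanism of \cite[Proposition 3.2]{dghm}. On an interval $(r_k,r_{k+1})$ between consecutive zeros, $v_\lambda$ stays strictly of one sign, attains a single interior extremum $m_k$, and the energy identity $\tfrac1{p'}|v_\lambda'|^p = E(r) - F(v_\lambda) \le E(r_k) - F(v_\lambda)$ together with $E(r_k)\to 0$ gives, for $k$ large, $|v_\lambda'|^p \le p'\,(E(r_k)+|F(v_\lambda)|)$. Since the oscillation amplitude $\to 0$, for $k$ large $|F(v_\lambda)|$ dominates, and separating variables in $v_\lambda'=\pm(p'\,|F(v_\lambda)|(1+o(1)))^{1/p}$ and integrating over $(r_k,r_{k+1})$ shows
\[
r_{k+1}-r_k \;\le\; C\int_0^{\,|m_k|}\frac{ds}{|F(s)|^{1/p}}\;\longrightarrow\;0 ,
\]
so the zeros accumulate at a \emph{finite} point $r_\infty=\lim r_k<\infty$. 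But then $v_\lambda(r_\infty)=0$ and, since $E$ is continuous and $E(r_k)\to 0$, also $v_\lambda'(r_\infty)=0$: $v_\lambda$ has a double zero at $r_\infty$, contradicting Proposition~\ref{basic1}\,$(iii)$ (which says $v_\lambda$ cannot change sign after a double zero).

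\emph{Second alternative: $h=q^{p'}$ has $h'$ non-decreasing with $h'(r)\to+\infty$.} Here the zeros need not accumulate, so I exploit the weight instead. Write the equation in the conserved-up-to-friction form: from \eqref{I'}, $E'(r)=-\tfrac{q'}{q}|v_\lambda'|^p\le 0$, and I want a lower bound on the total dissipation between a zero $r_k$ and the next extremum that contradicts $E(r_k)\to 0$ unless it happens fast — i.e., I want to show that carrying a fixed "loop" of the oscillation costs a definite amount of $r$. The cleaner route is a Picone/Sturm-type comparison: between two consecutive zeros $r_k,r_{k+1}$, multiply $(q\,\phi_p(v_\lambda'))'+q\,f(v_\lambda)=0$ by $v_\lambda$ and integrate by parts to get $\int_{r_k}^{r_{k+1}} q\,|v_\lambda'|^p\,dr = \int_{r_k}^{r_{k+1}} q\,f(v_\lambda)\,v_\lambda\,dr$; since $|v_\lambda|\le\beta^+$ and $|f|$ is bounded on $[\beta^-,\beta^+]$ by some $M$, the right side is $\le M\int_{r_k}^{r_{k+1}} q\,|v_\lambda|\,dr$, while a Poincaré-type inequality on $(r_k,r_{k+1})$ for the weighted $p$-Laplacian with the weight $q$ — whose first eigenvalue is bounded below using $(Q1)$–$(Q3)$ and the growth of $q$ — gives $\int q\,|v_\lambda|^p \le \Lambda(r_k)^{-1}\int q\,|v_\lambda'|^p$ with $\Lambda(r_k)\to\infty$ as $r_k\to\infty$ (this is where $h'\to\infty$, equivalently the growth of $q$, enters). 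Feeding this back yields $\int_{r_k}^{r_{k+1}} q\,|v_\lambda'|^p\,dr \le M\,(\beta^+)^{1-p}\Lambda(r_k)^{-1}\int_{r_k}^{r_{k+1}} q\,|v_\lambda'|^p\,dr$, which for $r_k$ large forces $\int_{r_k}^{r_{k+1}} q\,|v_\lambda'|^p\,dr=0$, hence $v_\lambda\equiv 0$ on $(r_k,r_{k+1})$ — impossible for a sign-changing arc. This is the contradiction.

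\emph{Main obstacle.} The delicate point is the second alternative: making precise the weighted Poincaré inequality on a growing interval and extracting from assumption $(H)$ (phrased via $h=q^{p'}$ and $h'$) the quantitative statement $\Lambda(r_k)\to\infty$, uniformly in the length $r_{k+1}-r_k$, which a priori could itself grow. One has to control the interplay between the interval length and the weight; the monotonicity of $h'$ and $(Q3)$ (which pins $r\,q'/q$ between two constants) are exactly what make $q$ grow fast enough — super-polynomially, morally — that the eigenvalue lower bound survives regardless of how long the nodal interval is. I expect this comparison-and-eigenvalue estimate, together with checking it reduces to the classical Sturm argument when $q(r)=r^{N-1}$ with $N>p$, to be the bulk of the work; the first alternative is essentially \cite[Proposition 3.2]{dghm} adapted to general $p$ and is routine by comparison.
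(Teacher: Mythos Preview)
Your setup contains a decisive error that undermines both branches. You argue that the extrema $m_k$ between consecutive zeros satisfy $F(m_k)\to 0$ and conclude ``$m_k\to 0$ by $(f2)$''. This is false. Since $E$ is nonincreasing and $E(r_{k+1})\ge 0$, at the extremum one has $F(m_k)=E(\text{ext})\ge E(r_{k+1})\ge 0$; together with $m_k\ne 0$ this forces $m_k\notin(\beta^-,\beta^+)$, because $(f2)$ says $F<0$ there. Hence $|m_k|\ge\min(\beta^+,|\beta^-|)$, and $F(m_k)\to 0^+$ actually gives $m_k\to\beta^\pm$, exactly as the paper records. The oscillation amplitude does \emph{not} shrink to zero; it stabilizes at $\beta^\pm$. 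Your first-alternative argument, which needs $|m_k|\to 0$ so that $\int_0^{|m_k|}|F|^{-1/p}\to 0$, therefore collapses. (There is also a direction error: the inequality you write is an \emph{upper} bound on $|v_\lambda'|$, which yields a \emph{lower} bound on the traversal time, not the upper bound you need.)

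For the second alternative your proposed route is genuinely different from the paper's, but it has real gaps. First, $(Q3)$ pins $r\,q'/q$ between two positive constants, which forces $c\,r^{C_1}\le q(r)\le C\,r^{C_2}$: the weight grows \emph{polynomially}, not ``super-polynomially'' as you suggest. Second, with polynomial weight and nodal intervals of uncontrolled length, there is no reason the weighted first Dirichlet eigenvalue $\Lambda(r_k)$ should tend to infinity; you even flag this as the main obstacle, but offer no mechanism. Third, the ``feeding back'' step is algebraically broken: you bound $\int q\,f(v_\lambda)v_\lambda$ by $M\int q\,|v_\lambda|$, then invoke a Poincar\'e inequality for $\int q\,|v_\lambda|^p$; passing from one to the other via $|v_\lambda|\le(\beta^+)^{1-p}|v_\lambda|^p$ goes the wrong way for $p>1$. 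The paper's actual argument avoids all of this. It introduces $H(r)=h(r)\,E(r)$ and computes $H'=h'\,(F\circ v_\lambda)$. On each period the solution spends a time bounded below by a fixed constant in the region $|v_\lambda|\in[a,b]\subset(0,\beta^+)$ where $F\le -k_0<0$, and a time bounded \emph{above} by a fixed constant near the peak where $F>0$ (because the equation forces $|(\phi_p(v'))'|\ge\overline f/2$ there for $r$ large). Since $h'$ is nondecreasing with $h'\to\infty$, the negative contribution to $\int H'$ dominates and drives $H$ to $-\infty$, contradicting $E\ge 0$. This is the idea you are missing; the assumption on $h'$ enters precisely to weight the sign-definite pieces of $F\circ v_\lambda$, not through any eigenvalue comparison.
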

\begin{proof} From Proposition~\ref{basic3}, we know that $\mathcal E\le0$. From Assumption $(f1)$-$(f2)$, either $\mathcal E<0$ or $\mathcal E=0$ and $\ell=0$, according to Propositions~\ref{basic3} and~\ref{basic1}~(iv).

Assume first that $r_\lambda(0)$ is finite, which corresponds either to $\mathcal E<0$ or to a case in which $\mathcal E=0$ and $v_\lambda$ reaches a double zero at $r_\lambda(0)$. If $\{z_n\}$ is a sequence of zeros accumulating at some $r_\infty\le r_\lambda(0)$, then $v_\lambda(r_\infty)=0$ and for each $n\in\N$, there exists a unique point $r_n\in(z_n,z_{n+1})$ at which $v_\lambda$ reaches a local maximum or minimum value. At these points, using that $E_\lambda(r_n)\ge E_\lambda(z_{n+1})\ge0$, we must have that either $v_\lambda(r_n)\le\beta^-$ or $v_\lambda(r_n)\ge\beta^+$, a contradiction. This proves that $v_\lambda$ has only a finite number of zeros on $(0,r_\lambda(0))$, and by Proposition~\ref{basic1}~$(iii)$, we know that $v_\lambda$ cannot change sign on $(r_\lambda(0),\infty)$.

\medskip The last case corresponds to $\mathcal E=0$ and $\ell=0$ and $r_\lambda(0)=+\infty$. If $s\mapsto|F(s)|^{-1/p}$ is integrable $(-\varepsilon,\varepsilon)$, then the same proof given in \cite[Proposition 3.2]{dghm} can be adapted to the case of a weight $q$ by using $(Q1)$-$(Q3)$ so we omit it. According to $(H)$, we are therefore assuming that $h=q^{p'}$ is such that $h'$ is non-decreasing and $\lim_{r\to+\infty}h'(r)=+\infty$.

We argue by contradiction and suppose that there is an infinite sequence (tending to infinity) of simple zeros of $v$. Then $E_\lambda(r)\ge 0$ for all $r>0$. We denote by $\{z_n^+\}$ the zeros for which $v'(z_n^+)>0$ and by $\{z_n^-\}$ the zeros for which $v'(z_n^-)<0$. We have
\[
0<z_1^-<z_1^+<z_2^-<\cdots<z_n^+<z_{n+1}^-<z_{n+1}^+<\cdots
\]
Between $z_n^-$ and $z_n^+$ there is a minimum $r_n^m$ where $v(r_n^m)<0$ and between $z_n^+$ and $z_{n+1}^-$ there is a maximum $r_n^M$ where $v(r_n^M)>0$. As $E_\lambda(r_n^M),\ E_\lambda(r_n^m)\ge 0$, it must be that $v(r_n^m)<\beta^-$ and $v(r_n^M)>\beta^+$. As we must have $\lim_{r\to+\infty}E_\lambda(r)=0$, it follows that $\lim_{n\to+\infty}v(r_n^M)=\beta^+$ and $\lim_{n\to+\infty}v(r_n^m)= \beta^-$.

Let $b^+$ be the largest positive zero of $f$ ($b^-$ the smallest negative zero of $f$). Set
\[
d^+=\beta^+-b^+\,,\quad b_1=b^++\frac{d^+}4\,,\quad d^-=b^--\beta^-\;,\quad b_2=b^--\frac{d^-}4\,,
\]
and let $a_1$, $a_2$ be such that
\[
b_2<a_2<0<a_1<b_1\,.
\]
We define the unique points $r_{1,n}\in(z_n^+,r_n^M)$, $r_{2,n}\in(r_n^M,z_{n+1}^-)$, $s_{1,n}\in(r_{2,n}$, $z_{n+1}^-)$, $t_{1,n}\in(z_{n+1}^-$, $r_{n+1}^m)$, $s_{2,n}\in (z_{n+1}^-,t_{1,n})$ so that
\[
v(r_{1,n})=b_1=u(r_{2,n})\,,\quad v(s_{1,n})=a_1\,,\quad v(s_{2,n})=a_2\,,\quad v(t_{1,n})=b_2\,.
\]
We have
\[
z_n^+<r_{1,n}<r_n^M<r_{2,n}<s_{1,n}<z_{n+1}^-<s_{2,n}<t_{1,n}<r_{n+1}^m\,.
\]
For $r\in(r_{2,n},s_{1,n})\cup (s_{2,n},t_{1,n})$, $v(r)\in[b_2,a_2]\cup[a_1,b_1]\subset(\beta^-,\beta^+)$, hence $F(v(r))\le0$, but also $|F\!\circ\!v|\ge k_0$ for some positive constant $k_0$ independent of $n$. Moreover, by applying the mean value theorem, we get that there exists a constant $k_1$, which is independent of $n$, such that
\[
0<k_1\le s_{1,n}-r_{2,n}\quad\mbox{and}\quad 0<k_1\le t_{1,n}-s_{2,n}\,.
\]

Next, let us define
\[
\overline f:=\min_{s\in[b_1,\lambda]}f(s)
\]
and notice that $\overline f>0$ by $(f1)$-$(f2)$. From~\eqref{ivp} we have that
\[
|(\phi_p(v'))'(r)|\;=\;\Bigm|\frac{N-1}r\,\phi_p(v'(r))+f(v(r))\Bigm|\;\ge\;\overline f-\frac{N-1}r\,\phi_p(C_{\lambda})
\]
for any $r\in[r_{1,n},r_{2,n}]$. If additionally $r\ge\overline r:=2\,(N-1)\,\phi_p(C_{\lambda})/\,\overline f$, then the r.h.s.~in the above inequality is bounded from below by $\overline f/2$. Hence, choosing $n_0$ such that $z_n^+\ge\overline r$ for all $n\ge n_0$, we have that
\[
|(\phi_p(v'))'(r)|\ge \frac12\,\overline f\quad\mbox{for all }r\in[r_{1,n},r_{2,n}]
\]
and therefore, again from the mean value theorem, we get that
\[
2\,\phi_p(C_{\lambda})\ge |\phi_p(v'(r_{2,n}))-\phi_p(v'(r_{1,n}))|=|(\phi_p(v'))'(\xi)|\,(r_{2,n}-r_{1,n})\ge\frac12\,\overline f\,(r_{2,n}-r_{1,n})
\]
implying that
\[\label{nos1}
r_{2,n}-r_{1,n}\le\frac{2\,\phi_p(C_{\lambda})}{\overline f}\;.
\]
Let
\[\label{H1}
H(r):=h(r)\,E(r)\,.
\]
A straightforward computation shows that
\begin{eqnarray*}
H'(r)&=&p'\,(q(r))^{p'-1}\,q'(r)\,E(r)+h(r)\,E'(r)\nonumber\\
&=&p'\,(q(r))^{p'-1}\,q'(r)\,E(r)-h(r)\,\frac{q'(r)}{q(r)}\,|v'(r)|^p=h'(r)\,F(v(r))\,,\label{Hder1}
\end{eqnarray*}
thus showing that $H'=h'\,(F\!\circ\!v)$. Thus we have
\begin{eqnarray*}\label{nos2}
H(t_{1,n})-H(r_{1,n})=\int_{r_{1,n}}^{t_{1,n}}h'\,(F\!\circ\!v)\;dr&=&\int_{r_{1,n}}^{r_{2,n}}h'\,(F\!\circ\!v)\;dr+
\int_{r_{2,n}}^{t_{1,n}}h'\,(F\!\circ\!v)\;dr\nonumber\\
&=&\int_{r_{1,n}}^{r_{2,n}}h'\,(F\!\circ\!v)\;dr-
\int_{r_{2,n}}^{t_{1,n}}h'\,|F\!\circ\!v|\;dr\nonumber\\
\le \int_{r_{1,n}}^{r_{2,n}}h'\,(F\!\circ\!v)\;dr&-&
\int_{r_{2,n}}^{s_{1,n}}h'\,|F\!\circ\!v|\;dr-\int_{s_{2,n}}^{t_{1,n}}h'\,|F\!\circ\!v|\;dr\nonumber\\
\le \int_{r_{1,n}}^{r_{2,n}}h'\,(F\!\circ\!v)\;dr&-&
k_0\int_{r_{2,n}}^{s_{1,n}}h'\;dr-k_0\int_{s_{2,n}}^{t_{1,n}}h'\;dr\nonumber\\
\le \int_{r_{1,n}}^{r_{2,n}}h'\,(F\!\circ\!v)\;dr&-&2\,k_0\,k_1\,h'(r_{2,n})\,.
\end{eqnarray*}
According to Proposition~\ref{basic3}, $\lim_{n\to+\infty}F(v(r_n^M))=0$. Let us choose $n_0$ large enough so that
\[
\frac{2\,\phi_p(C_{\lambda})}{\overline f}\,F(v(r_n^M))-2\,k_0\,k_1<-\,k_0\,k_1
\]
for all $n\ge n_0$, and hence
\[
\int_{r_{1,n}}^{t_{1,n}}h'\,(F\!\circ\!v)\;dr\le-\,k_0\,k_1\,h'(r_{2,n})\,.
\]
Clearly, we can repeat the above argument in the interval $(t_{1,n},r_{1,n+1})$, thus proving that
\[
H(r_{1,n_0+j})-H(r_{1,n_0})\le-\,k_0\,k_1\sum_{i=0}^{j-1}\Bigl(h'(r_{2,n_0+i})+h'(t_{2,n_0+i})\Bigr)
\]
where $t_{2,n}\in(r_{n+1}^m,z_{n+1}^+)$ is uniquely defined by the condition $u(t_{2,n})=b_2$. Hence
\[
\lim_{j\to+\infty}H(r_{1,n_0+j})=-\infty\,,
\]
implying the contradiction that $E(r_{1,n_0+j})<0$ for some $j$ large enough.
\end{proof}
\begin{remark}\label{extrarem}
A solution $v$ to  problem \eqref{eq2} cannot be oscillatory, that is, it can only have a finite number of nodes. Indeed,  with the notation used in the proof of the above result, $\lim_{n\to+\infty}v(r_n^M)=\beta^+$ and $\lim_{n\to+\infty}v(r_n^m)= \beta^-$, hence $v$ cannot tend to 0 as $r\to\infty$.
\end{remark}

\section{On the number of zeros of solutions to \texorpdfstring{\eqref{ivp}}{ivl}}\label{Section:Coordinates}

In this section, we reformulate the problem in the phase space. We start by computing a lower bound on the angular velocity around the origin.

\medskip Let $v=v_\lambda$ be any solution of \eqref{ivp}. Setting $w=\phi_p(v')$, or equivalently $v'=\phi_{p'}(w)$, Problem~\eqref{ivp} is equivalent to the following first order system.
\beq\label{Flow}
\begin{cases}v'=\phi_{p'}(w)\,,\\
w'=\displaystyle-\,\frac{q'}q\,w-f(v)\,,\\
v(0)=\lambda\,,\quad w(0)=0\,.
\end{cases}
\eeq
We recall that $p'$ stands for the H\"older conjugate of $p$. To the $(v,w)$ coordinates of the phase plane, we assign \emph{generalized polar coordinates} $(\rho,\theta)$ by writing
\[\label{uvxy}
\begin{cases}
v=\rho^{\frac 1p}\cos_{p'}(\theta)\\
w=\rho^{\frac 1{p'}}\sin_{p'}(\theta) \end{cases}
\]
where
\[
\rho=p\,\big[\Phi_p(v)+\Phi_{p'}(w)\big]\,,\quad\mbox{with}\quad\Phi_p(s):=\frac1p\,|s|^p\,,
\]
and $(\cos_{p'}(\theta),\sin_{p'}(\theta))$ is the solution to
\[
\dis\frac{dx}{d\theta}=-\,\phi_{p'}(y)\,,\;\dis\frac{dy}{d\theta}=\phi_p(x)\,,\quad x(0)=1\,,\quad y(0)=0\,.
\]
It is well known, see \cite{DEM}, that solutions to this last system are $2\,\pi_p=2\,\pi_{p'}$ periodic and
\[
\Phi_p\big(\cos_{p'}(\theta)\big)+\Phi_{p'}\big(\sin_{p'}(\theta)\big)=\dis\frac1p\quad\text{for all}\quad\theta\in \R\,.
\]
Notice that in case $p=p'=2$, $(\sqrt\rho,\theta)$ are the usual polar coordinates of $(v,w)$, and $\cos_{p'}$ and $\sin_{p'}$ are the usual $\cos$ and $\sin$ functions. The reader is invited to check that
\[
\sup_{\theta\in[0,2\pi{p'})}\sin_{p'}(\theta)\cos_{p'}(\theta)=\frac 1p\,.
\]

\medskip Now, if $(v,w)$ denotes a solution to~\eqref{Flow} and if we define the corresponding polar functions $r\mapsto\rho(r)$ and $r\mapsto\theta(r)$, then $(\rho,\theta)$ satisfies the following system of equations:
\[\label{Syst:rho-theta}
\begin{cases}
\rho' = p\,\phi_{p'}(w)\,\left[\phi_p(v)-f(v)-\dis\frac{q'}q\, w\right]\,,\\
\theta'=-\frac1\rho\,\left[\,p\,\Phi_{p'}(w)+v\,f(v)+\dis\frac{q'}q\,v\,w \right]\,,\\
\rho(0)=\lambda^p\,,\quad\theta(0)=0\,.\end{cases}
\]
To emphasize the dependance in $\lambda$, we will denote the solution by $(\rho_\lambda,\theta_\lambda)$.

\medskip The following lemma is crucial for the proof of our main result. A similar result was proven in \cite{dghm} for the case of an increasing and superlinear $f$. It establishes a lower bound on the angular velocity $|\theta'|$ around the origin, which will be used to estimate the number of sign changes of $v$ by counting the number of rotations of the solutions around the origin in the phase plane.
\begin{lemma}[\rm Rotation Lemma]\label{AngularVelocity} Let $q$ and $f$ satisfy $(Q1)$-$(Q4)$ and $(f1)$-$(f2)$ respectively. For any $c_0>0$, there exist positive constants $\omega$, $c_1$ and $\overline r$ such that if
\[
\frac12\,c_1\le E(r)\le c_1\quad\mbox{and } r\ge \overline r\,,
\]
then
\[\label{rot}
-\,\theta_\lambda'(r)>\,\omega-\,c_0\,g(v(r),v'(r))
\]
where $g(v,v')=|v\,f(v)\,v'|$ if $\beta^-\le v\le\beta^+$ and $0$ otherwise.\end{lemma}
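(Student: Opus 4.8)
The plan is to work directly with the expression for $\theta_\lambda'$ given by the third component of the polar system, namely
\[
-\,\theta_\lambda'(r)=\frac1\rho\,\Big[p\,\Phi_{p'}(w)+v\,f(v)+\frac{q'}q\,v\,w\Big],
\]
and to bound each of the three terms from below on the region $\tfrac12 c_1\le E(r)\le c_1$, $r\ge\overline r$. The strategy is that for large $r$ the weight term $\frac{q'}q$ is small by $(Q3)$ (it is $\le C_2/r$), so the dominant contribution comes from $p\,\Phi_{p'}(w)+v\,f(v)$, which is essentially the ``angular energy'' of the asymptotic Hamiltonian system. First I would fix $c_1$ small enough that the energy band $[\tfrac12 c_1,c_1]$ forces $(v,w)$ to stay away from the origin in phase space: since $E=\frac1{p'}|w|^p+F(v)$ and $F$ is bounded below with $F(0)=0$, an energy bounded below by $\tfrac12 c_1>0$ together with $F<0$ on $(\beta^-,\beta^+)\setminus\{0\}$ gives a uniform lower bound $\rho\ge\rho_0>0$; it also confines $|v|$ and $|w|$ to a compact set, away from $v=0$ unless $|w|$ is bounded below.

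Next I would split according to whether $v(r)\in[\beta^-,\beta^+]$ or not. When $v\notin[\beta^-,\beta^+]$, one has $v\,f(v)>0$ by $(f2)$, and in fact $v\,f(v)$ is bounded below by a positive constant on the compact admissible set (using that $f(\beta^\pm)$ need not vanish — here I would invoke $(f2)$ which gives $f(s)>0$ on $[\beta^+,\infty)$; near $v=\beta^+$ one uses instead that $F(v)\ge0$ is small there, so the energy constraint forces $|w|$ bounded below, making $p\,\Phi_{p'}(w)$ the positive term). Either way $p\,\Phi_{p'}(w)+v\,f(v)\ge 2\omega_0>0$ for some $\omega_0$ depending only on $c_1$. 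When $v\in[\beta^-,\beta^+]$, we have $F(v)\le0$, so $\frac1{p'}|w|^p=E(r)-F(v)\ge\tfrac12 c_1$, which gives a direct lower bound $p\,\Phi_{p'}(w)=|w|^p\ge \tfrac{p}{2p'}c_1$ on this piece as well; the term $v\,f(v)$ can now be negative, but its absolute value is exactly $g(v,v')$ up to the factor $|v'|=|\phi_{p'}(w)|$ — more precisely I would bound $|v\,f(v)|\le c_0\,g(v,v')$ is \emph{not} quite what appears; rather the $-c_0\,g(v,v')$ term in the statement is there to absorb $v\,f(v)$ when it is negative, writing $v\,f(v)\ge -|v\,f(v)|=-|v\,f(v)\,v'|/|v'|$ and observing $|v'|$ is bounded above, so $v\,f(v)\ge -C\,g(v,v')$; here the constant is forced and one takes $c_0$ accordingly, or more honestly one shows the weaker bound with $c_0$ on the right absorbing this term after dividing by $\rho\ge\rho_0$.

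Finally I would handle the weight cross-term: $\big|\frac{q'}q\,v\,w\big|\le \frac{C_2}{r}\,|v|\,|w|\le \frac{C_2}{r}\,M$ where $M$ bounds $|v||w|$ on the admissible compact set; choosing $\overline r$ large makes this $\le\omega_0\rho_0/2$, which is harmless. Collecting, $-\theta_\lambda'(r)\ge\frac1\rho(2\omega_0 - \tfrac12\omega_0 - C\,g) \ge \omega - c_0\,g(v(r),v'(r))$ with $\omega=\omega_0\rho_0/(2\max\rho)$ after also using the upper bound $\rho\le\rho_1$. The main obstacle I anticipate is the delicate bookkeeping near $v=\beta^\pm$: there $F(v)=0$ but $v\,f(v)$ need not be positive (indeed $f(\beta^+)$ could be $0$ if $f$ vanishes there, which $(f2)$ does not forbid since it only asserts $f>0$ on $[\beta^+,\infty)$), so one cannot get the lower bound from $v\,f(v)$ alone and must genuinely lean on the energy band forcing $|w|$ bounded below on the complementary region — a uniform compactness/continuity argument on the closed energy band is what makes all the constants $\omega,c_1,\overline r$ exist simultaneously, and getting the quantifiers in the right order (choose $c_1$ first, then $\overline r$, with $\omega$ extracted last) is the technical heart of the proof.
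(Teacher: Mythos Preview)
Your approach mirrors the paper's proof: same expression for $-\theta'$, same case split on whether $v\in[\beta^-,\beta^+]$, and the same use of $(Q3)$ to bound the cross term $\frac{q'}{q}\,\frac{vw}{\rho}$ by $C_2/(p\,r)$ (the paper does this via $|vw|/\rho=|\cos_{p'}(\theta)\sin_{p'}(\theta)|\le 1/p$ rather than a compactness bound on $|v||w|$, but either works). Two points need fixing.

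First, the ``main obstacle'' you anticipate near $v=\beta^\pm$ is not there: $(f2)$ asserts $f(s)>0$ on the \emph{closed} interval $[\beta^+,\infty)$, so $f(\beta^+)>0$, and likewise $f(\beta^-)<0$. Hence $v\,f(v)$ is bounded below by a strictly positive constant $A=\inf\{s\,f(s):s\in[-\rho_2^{1/p},\beta^-]\cup[\beta^+,\rho_2^{1/p}]\}$ on the whole outer region; no contingency argument via $|w|$ is needed.

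Second, and this is the real slip: in the region $\beta^-\le v\le\beta^+$ you write $v\,f(v)\ge -|v\,f(v)\,v'|/|v'|$ and then invoke ``$|v'|$ bounded above'' to conclude $\ge -C\,g$. That goes the wrong way---an upper bound on $|v'|$ lets $-g/|v'|$ be arbitrarily negative. What you need, and what you in fact derived a few lines earlier from $E\ge c_1/2$ and $F(v)\le0$, is the \emph{lower} bound $|v'|\ge(p'\,c_1/2)^{1/p}$, giving
\[
\frac{v\,f(v)}{\rho}\;\ge\;-\,\Big(\frac{2}{p'\,c_1}\Big)^{1/p}\frac{1}{\rho_1}\,g(v,v')\,.
\]
This coefficient is the $c_0$ of the statement; since it tends to $0$ as $c_1\to\infty$ (both factors do), any prescribed $c_0>0$ is matched by taking $c_1$ large enough, after which $\overline r$ is chosen so that $C_2/(p\,\overline r)$ is absorbed into half of the resulting $\omega$. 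With these two corrections your argument coincides with the paper's.
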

\begin{proof} Let us choose $c_1$ such that $0<c_1<\lim_{\lambda\to+\infty}F(\lambda)$. It can be verified that
\[
\rho_1\le \rho_\lambda(r)\le \rho_2\,,
\]
where
\ben
\rho_1:&=&\min\left\{\tfrac 14\,p\,c_1,\,\big(F_r^{-1}(c_1/4)\big)^p,|F_\ell^{-1}(c_1/4)|^p\right\}\,,\\
\rho_2:&=&\max\left\{\big(F_r^{-1}(c_1)\big)^p,\,|F_\ell^{-1}(c_1)|^p\}+p\,(c_1+\overline F)\right\}\,,
\een
$F^{-1}_\ell$ denotes the inverse of $F|_{(-\infty,\beta^-]}$ and $F_r^{-1}$ denotes the inverse of $F|_{[\beta^+,\infty)}$. Then we have
\[
-\,\theta'=\(\frac p{p'}\,\frac{|w|^{p'}}\rho+\frac{v\,f(v)}\rho\)+\frac{q'}q\,\frac{v\,w}\rho\ge\(\frac p{p'}\,\frac{|w|^{p'}}\rho+\frac{v\,f(v)}\rho\)-\frac{C_2}{p\,r}\label{mv}
\]
where $C_2>0$ is as in $(Q3)$.

Now, if $v\le\beta^-$ or $v\ge\beta^+$, then
\[
-\,\theta'(r)\ge \frac{A}\rho-\frac{C_2}{p\,r}\ge\frac{A}{\rho_2}-\frac{C_2}{p\,r}
\]
where $A:=\inf\left\{s\,f(s)\ |\ s\in[-\rho_2^{1/p},\beta^-]\cup[\,\beta^+,\rho_2^{1/p}\,]\right\}$.

\medskip Assume next that $\beta^-\le v\le\beta^+$. As $F\!\circ\!v\le 0$, then from $E(r)\ge c_1/2$ we have that $\frac p{p'}\,|w|^{p'}\ge p\,c_1/2$,
\[
-\,\theta'(r)\ge \frac{p\,c_1}{2\,\rho}+\frac{v\,f(v)}\rho-\frac{C_2}{p\,r}\ge \frac{p\,c_1}{2\,\rho_2}-\Bigl(\frac2{p'\,c_1}\Bigr)^{1/p}\frac1{\rho_1}\,|v\,f(v)|\,|v'|-\frac{C_2}{p\,r}\,.
\]
Hence, the conclusion holds with
\[
\omega=\min\Bigl\{\frac{A}{2\,\rho_2},\frac{p\,c_1}{4\,\rho_2}\Bigr\},\quad c_0=\Bigl(\frac2{p'\,c_1}\Bigr)^{1/p}\frac1{\rho_1}\quad\mbox{and}\quad\overline r=\max\Bigl\{\frac{2\,C_2\,\rho_2}{p\,A},\frac{4\,C_2\,\rho_2}{p^2\,c_1}\Bigr\}\,.
\]
\end{proof}

From Proposition~\ref{basic3}, given $c_1$ as above and $\lambda>\beta^+$ such that $c_1<F(\lambda)$, there exist points $ r_\lambda(c_1)<r_\lambda(c_1/2)$, where $r_\lambda(a)$ is defined in \eqref{defrl}. Now we can state the second main ingredient of this paper, which strongly relies on the sub-criticality assumption $(SC)$ as was emphasized in Section~\ref{Section:Examples}.
\begin{lemma}[\rm Energy Dissipation Lemma]\label{nw1} Let $q$ and $f$ satisfy $(Q1)$-$(Q3)$, $(f1)$-$(f2)$ respectively and let $(SC)$ hold. With the above notations, we get
\[
\lim_{\lambda\to+\infty} r_\lambda(c_1)=+\infty\quad\mbox{and}\quad\lim_{\lambda\to+\infty}(r_\lambda(c_1/2)-r_\lambda(c_1))=+\infty\,.
\]
\end{lemma}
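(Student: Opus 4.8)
The plan is to track the solution $v_\lambda$ on the interval where $v_\lambda\ge\beta^+$, i.e.\ before the first node, and show that on this interval the derivative $v_\lambda'$ satisfies an ODE comparison that forces the length of the interval — hence $r_\lambda(c_1)$ — to blow up with $\lambda$. First I would observe that, as long as $v_\lambda(r)\ge\alpha\,\lambda$ (with $\alpha\in(0,1)$ from $(SC)$), one has $f(v_\lambda)\ge m_\lambda:=\inf_{s\in[\alpha\lambda,\lambda]}f(s)>0$, while the energy identity gives $|v_\lambda'|^p/p'=E(r)-F(v_\lambda)\le F(\lambda)$, so $|v_\lambda'|$ is controlled. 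Integrating the equation $(q\,\phi_p(v'))'=-q\,f(v)$ from $0$ to $r$ and using $q'>0$, $v'(0)=0$, one gets $-q(r)\,\phi_p(v_\lambda'(r))=\int_0^r q(s)\,f(v_\lambda(s))\,ds\ge m_\lambda\,Q(r)$ on this range, hence $-v_\lambda'(r)\ge \phi_{p'}(m_\lambda)\,\phi_{p'}(Q(r)/q(r))$. Integrating once more, the first ``time'' $r$ at which $v_\lambda$ can drop from $\lambda$ to $\alpha\lambda$ must satisfy $(1-\alpha)\lambda\ge \phi_{p'}(m_\lambda)\int_0^r \phi_{p'}(Q/q)\,ds$; but actually the useful inequality goes the other way, so I would instead bound $-v_\lambda'$ from \emph{above}: combining the energy bound $|v_\lambda'|\le (p'\,F(\lambda))^{1/p}$ with $(SC)$'s first inequality \eqref{sc1}, which via the computation $\int_0^r\phi_{p'}(Q/q)\,ds \asymp Q$-type estimates near $0$ (this is where $\mu+(Q/q)'-1/p\ge0$ enters, controlling $\int_0^r\phi_{p'}(Q/q)$ by a power related to $\mu^*$), shows that bringing $v_\lambda$ from $\lambda$ down to $\alpha\lambda$ requires $r$ at least of order $\big((1-\alpha)\lambda/\phi_{p'}(f(s_1))\big)^{1/p'}$ for some $s_1\in[\alpha\lambda,\lambda]$ — precisely the argument of the inner $Q(\cdot)$ in \eqref{c0}.

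Next I would compare the energy drop $F(\lambda)-c_1$ against the total dissipation $\int_0^{r_\lambda(c_1)}\frac{q'}{q}\,|v_\lambda'|^p\,ds$ coming from \eqref{I'}. On the interval before the first node, $v_\lambda\in[\alpha\lambda,\lambda]$ at least until $v_\lambda$ first reaches $\alpha\lambda$; restricting the dissipation integral to a sub-interval where $v_\lambda\in[\alpha\lambda,\lambda]$ and using $q'/q\le C_2/r$ from $(Q3)$ together with the length lower bound just obtained, one gets $F(\lambda)\le c_1 + C_2\,(p'F(\lambda))\,\log\!\big(r_\lambda(c_1)/(\text{start})\big)$ — i.e.\ $\log r_\lambda(c_1)$ is bounded below by $(F(\lambda)-c_1)/(C_2\,p'\,F(\lambda))$, which does \emph{not} by itself go to $\infty$. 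So the sharper route is needed: the point of $(SC)$ is that the energy actually dissipated while $v$ stays near its peak, namely $\int (q'/q)|v'|^p\,ds$ over the peak layer, is controlled by $(F(s_2)-\mu\,s_2 f(s_2))\,Q(\cdots)$ up to constants — this is a Pohozaev/Rellich-type identity multiplied by $q$ and integrated. Concretely, multiplying the equation by $r\,q'\,v$ (or by $v$ and by the ``Pohozaev multiplier'' adapted to the weight via $(Q/q)'$), integrating by parts, and using that $v$ has not yet changed sign, yields an identity whose left side is $\asymp (F(\lambda)-\mu\,\lambda f(\lambda))$ and whose right side involves $Q$ evaluated at the layer width; the hypothesis \eqref{c0} says exactly that the ratio of the energy-to-be-dissipated to the available ``room'' diverges, forcing the layer — and thus $r_\lambda(c_1)$ — to be long.

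For the second assertion, $r_\lambda(c_1/2)-r_\lambda(c_1)\to\infty$, I would work on the interval $[r_\lambda(c_1),r_\lambda(c_1/2)]$ where $E\in[c_1/2,c_1]$, so the kinetic term $|v_\lambda'|^p/p'$ is comparable to $c_1$ (up to $|F\circ v|$, which is bounded by $\overline F$). The amount of energy to be dissipated here is only $c_1/2$, a fixed number, while the dissipation rate is $\frac{q'}{q}|v'|^p\le \frac{C_2}{r}(p'(c_1+\overline F))$. Hence $c_1/2 \le C_2\,p'\,(c_1+\overline F)\,\log\!\big(r_\lambda(c_1/2)/r_\lambda(c_1)\big)$ — wait, this bounds the \emph{ratio} from below by a constant, not the difference. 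To upgrade to $r_\lambda(c_1/2)-r_\lambda(c_1)\to\infty$ I would instead use that on this energy shell the solution performs a definite number of oscillations per unit of $r$ only when $r$ is moderate; more simply, since $r_\lambda(c_1)\to\infty$ by the first part, the estimate $c_1/2\le \int_{r_\lambda(c_1)}^{r_\lambda(c_1/2)}\frac{q'}{q}|v'|^p\,ds \le \frac{C_2}{r_\lambda(c_1)}\,(p'(c_1+\overline F))\,(r_\lambda(c_1/2)-r_\lambda(c_1))$ gives $r_\lambda(c_1/2)-r_\lambda(c_1)\ge \frac{c_1\,r_\lambda(c_1)}{2\,C_2\,p'\,(c_1+\overline F)}\to\infty$. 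That closes the argument.

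\textbf{Main obstacle.} The delicate step is the first one: extracting from $(SC)$ — specifically from \eqref{c0} combined with \eqref{sc1} — the quantitative lower bound on $r_\lambda(c_1)$. This requires the right Pohozaev-type multiplier adapted to the weight $q$ (the function $(Q/q)'$ and the parameter $\mu$ are tailored for this), careful handling of the boundary terms at $r=0$ using $(Q3)$ and $v'(0)=0$, and the observation that the ``width'' of the peak layer is governed by $\big((1-\alpha)s/\phi_{p'}(f(s_1))\big)^{1/p'}$ precisely because $-v_\lambda'\lesssim \phi_{p'}(f(s_1))^{?}$-type bounds hold there; matching these to get the argument of $Q$ in \eqref{c0} is the technical heart, and the sub-criticality is exactly what makes $(F(s_2)-\mu s_2 f(s_2))\,Q(\text{width})\to\infty$.
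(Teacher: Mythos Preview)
Your argument for the second assertion is exactly the paper's: on $[r_\lambda(c_1),r_\lambda(c_1/2)]$ one has $|E'|=(q'/q)|v'|^p\le \frac{C_2}{r_\lambda(c_1)}\,p'(c_1+\overline F)$, so dissipating $c_1/2$ forces the length to be at least $\frac{c_1\,r_\lambda(c_1)}{2C_2\,p'(c_1+\overline F)}\to\infty$.

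For the first assertion your overall architecture is right --- a Pohozaev-type identity plus a lower bound on the width $\mathsf r_\alpha:=\inf\{r:v_\lambda(r)=\alpha\lambda\}$ --- but two of the mechanisms are misattributed, and until these are fixed the sketch does not close.

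\emph{The width estimate.} The lower bound $\mathsf r_\alpha\gtrsim\big((1-\alpha)\lambda/\phi_{p'}(f(s_1))\big)^{1/p'}$ does \emph{not} come from the energy bound combined with \eqref{sc1}. It comes directly from the ODE: for $r\le\mathsf r_\alpha$ one has $-q(r)\phi_p(v_\lambda'(r))=\int_0^r q\,f(v_\lambda)\le f(s_1)\,Q(r)$ with $s_1=\mathrm{argmax}_{[\alpha\lambda,\lambda]}f$, hence $-v_\lambda'\le\phi_{p'}(f(s_1))\,\phi_{p'}(Q/q)$; integrating and using $(Q3)$ to get $\int_0^r\phi_{p'}(Q/q)\asymp r^{p'}$ gives the claimed width. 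No part of $(SC)$ is used here.

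\emph{The role of \eqref{sc1}.} Condition \eqref{sc1} is not there to control $\int\phi_{p'}(Q/q)$. The correct identity --- which is not obtained from the multiplier $rq'v$ you suggest --- is
\[
\frac{d}{dr}\Big(Q\,E+\mu\,q\,v_\lambda\,\phi_p(v_\lambda')\Big)=q\,|v_\lambda'|^p\Big(\mu+\Big(\tfrac{Q}{q}\Big)'-\tfrac1p\Big)+q\,\big(F(v_\lambda)-\mu\,v_\lambda f(v_\lambda)\big).
\]
Integrating on $(0,r)$, \eqref{sc1} makes the first integrand $\ge0$ on $(0,r_0)$ so that piece can be dropped from below; on $(r_0,r)$ the same term is bounded using $q\le q^2/q'$ and $\int(q'/q)|v'|^p=E(r_0)-E(r)$. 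This yields an inequality of the form
\[
C(r)\,|E(r)|+\tfrac{\mu}{p}\,q(r)\,\rho_\lambda(r)+D(r)\;\ge\;\int_0^{\mathsf r_\alpha}q\,\big(F(v_\lambda)-\mu v_\lambda f(v_\lambda)\big)-\overline M\,Q(r),
\]
with $C,D$ depending only on $q$ and $r$. Now the contradiction structure --- which your sketch omits --- is essential: assume $r_{\lambda_n}(c_1)\le K$ along a sequence $\lambda_n\to\infty$, restrict $r$ to a fixed compact $[0,R]$, so the left side is bounded in terms of $|E(r)|$ and $\rho_\lambda(r)$; the right side is at least $\big(F(s_2)-\mu s_2 f(s_2)\big)\,Q(\mathsf r_\alpha)-\overline M\,Q(R)$, and plugging in the width bound makes this exactly the quantity in \eqref{c0}, which $\to\infty$. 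Hence $\rho_{\lambda_n}(r)\to\infty$ uniformly on $[K,R]$, contradicting $E\le c_1$ there.

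In short: your Pohozaev instinct is correct, but the specific functional $QE+\mu q v\phi_p(v')$ is what makes \eqref{sc1} and \eqref{c0} mesh, and the width bound is a straightforward ODE estimate independent of $(SC)$.
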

\begin{proof} Assume that there exists a sequence $\lambda_n\to+\infty$ and $K>0$ such that $r_{\lambda_n}(c_1)\le K$ for all $n$. Let $\alpha\in(0,1)$, $r_0>0$ and $\mu$ be as in $(SC)$ and set $\mathsf r_\alpha:=\inf\{r>0\,:\,v_\lambda(r)=\alpha\,\lambda\}$. Clearly, we may assume that $K>r_0$ and also $K>\mathsf r_\alpha$, at least for $\lambda$ large.

Let us recall that $E=|v'|^p/p'+F\circ v$ and $E'=-\,q'(r)\,|v'(r)|^p/q(r)$, according to \eqref{funct-10} and~\eqref{I'}, $q\,(Q/q)'=q-\,Q\,q'/q$ and $1/p+1/p'=1$, so that
\[
\frac d{dr}\big(Q\,E+\mu\,q\,v_\lambda\,\phi_p(v_\lambda')\big)=q\,|v_\lambda'|^p\(\mu+\Bigl(\frac Qq\Bigr)'-\frac 1p\)+q\,\Bigl(F(v_\lambda)-\mu\,v_\lambda\,f(v_\lambda)\Bigr)\,.
\]
Then, for $r\ge r_0$ we have
\begin{eqnarray*}
&&\hspace*{-1cm}Q(r)\,E(r)+\mu\,q(r)\,v_\lambda(r)\,\phi_p(v_\lambda'(r))\\
&&\ge\int_0^{r_0}q\underbrace{\Bigl(\mu+\Bigl(\frac Qq\Bigr)'-\frac1p\Bigr)}_{\ge\,0\mbox{ by \eqref{sc1} in $(SC)$}}\,|v_\lambda'|^p\,dt\\
&&\hspace*{5cm}-\frac1p\int_{r_0}^rq\,|v_\lambda'|^p\,dt+\int_0^rq\,\Bigl(F(v_\lambda)-\mu\,v_\lambda\,f(v_\lambda)\Bigr)\;dt\\
&&\ge -\frac1p\,\frac{q^2(r)}{q'(r)}\int_{r_0}^r\frac{q'}{q}\,|v_\lambda'|^p\,dt+\int_0^rq\,\Bigl(F(v_\lambda)-\mu\,v_\lambda\,f(v_\lambda)\Bigr)\;dt\\
&&\hspace*{2cm}=\frac1p\,\frac{q^2(r)}{q'(r)}\,\big(E(r)-E(r_0)\big)+\int_0^rq\,\Bigl(F(v_\lambda)-\mu\,v_\lambda\,f(v_\lambda)\Bigr)\;dt\,.\label{ultacot}
\end{eqnarray*}
With $h:=q^{p'}$ according to \eqref{h}, since $(h\,E)'=h'\,(F\circ v)\ge -\,h'\,\overline F$ so that
\[
(h\,E)(r)-(h\,E)(r_0)\ge-\,\overline F\,\big(h(r)-h(r_0)\big)\,,
\]
we have
\[
E(r_0)\le\frac{h(r)}{h(r_0)}\,E(r)+\overline F\,\Bigl(\frac{h(r)}{h(r_0)}-1\Bigr)\,.
\]
Also recall that $w_\lambda=\phi_p(v_\lambda')$ and so, by Young's inequality,
\[
v_\lambda\,\phi_p(v_\lambda')=v_\lambda\,w_\lambda\le\frac 1p\,|v_\lambda\,|^p+\frac1{p'}\,|w_\lambda\,|^p\,.
\]
Collecting these estimates, we find that
\begin{multline*}
C(r)\,|E(r)|+\frac\mu p\,q(r)\,\rho_\lambda(r)+D(r)\\
\ge Q(r)\,E(r)+\mu\,q(r)\,v_\lambda(r)\,\phi_p(v_\lambda'(r))-\,\frac1p\,\frac{q^2(r)}{q'(r)}\,\big(E(r)-E(r_0)\big)\\
\ge\int_0^rq(t)\Bigl(F(v_\lambda(t))-\mu\,v_\lambda(t) f(v_\lambda(t))\Bigr)\;dt
\end{multline*}
with
\[
C(r):=Q(r)+\frac1p\,\frac{q^2(r)}{q'(r)}\,\Bigl(\frac{h(r)}{h(r_0)}-1\Bigr)\quad\mbox{and}\quad D(r):=\frac1p\,\frac{q^2(r)}{q'(r)}\,\overline F\,\Bigl(\frac{h(r)}{h(r_0)}-1\Bigr)\,.
\]
{}From $(SC)$, there exists $\overline M>0$ such that
\[
F(s)-\mu\,s\,f(s)\ge -\,\overline M\quad\mbox{for all }s\in\R\,.
\]
Let us choose some $R>K$ and define $\overline C$ and $\overline D$ as the maxima of $C$ and $D$ on $[0,R]$ respectively. For any $r\in[0,R]=[0,\mathsf r_\alpha]\cup[\mathsf r_\alpha,R]$, we have
\begin{multline*}
\overline C\,|E(r)|+\frac\mu p\,q(R)\,\rho_\lambda(r)+\overline D\\
\ge\int_0^{\mathsf r_\alpha}q(t)\,\Bigl(F(v_\lambda(t))-\mu\,v_\lambda(t)\,f(v_\lambda(t))\Bigr)\;dt-\,\overline M\,\big(Q(R)-Q(\mathsf r_\alpha)\big)
\end{multline*}
and thus
\ben
\overline C\,|E(r)|+\frac\mu p\,q(R)\,\rho_\lambda(r)+\overline D\ge\Bigl(F(s_2)-\mu\,s_2\,f(s_2)\Bigr)\,Q(\mathsf r_\alpha)-\,\overline M\,Q(R)\,,
\een
where we have set $s_2:=\mathrm{Argmin}\{F(s)-\mu\,s\,f(s)\,:\,s\in[\alpha\,\lambda,\lambda]\}$. If $s_1\in[\alpha\,\lambda,\lambda]$ is such that $f(s_1)=\max\limits_{s\in[\alpha\,\lambda,\lambda]}f(s)$. From~\eqref{ivp} we get
\ben
-\,q(r)\,\phi_p(v'_\lambda(r))=\int_{0}^rq(t)f(v_\lambda(t))\;dt\le f(s_1)\,Q(r)
\een
as long as $0<r<\mathsf r_\alpha$, hence
\ben
-\,v'_\lambda(r)\le \phi_{p'}(f(s_1))\,\phi_{p'}\Bigl(\frac{Q(r)}{q(r)}\Bigr)\quad\mbox{for any}\;r\in[0,\mathsf r_\alpha]\,.
\een
Integrating now this last inequality over $[0,\mathsf r_\alpha]$ we obtain
\[
(1-\alpha)\,\lambda\le \phi_{p'}(f(s_1))\int_0^{\mathsf r_\alpha}\phi_{p'}\Bigl(\frac{Q(r)}{q(r)}\Bigr)\;dr\,.
\]
From assumption $(Q3)$, it follows that
\begin{multline*}
\frac1{C_2}\(r\,q(r)-Q(r)\)=\frac1{C_2}\int_0^rt\,q'(t)\;dt=\frac1{C_2}\int_0^r\frac{t\,q'(t)}{q(t)}q(t)\;dt\le Q(r)\\
=\int_0^r\frac{q(t)}{q'(t)}\,q'(t)\;dt\le\frac1{C_1}\int_0^rt\,q'(t)\;dt=\frac1{C_1}\(r\,q(r)-Q(r)\),
\end{multline*}
so that there exist positive constants $D_1$ and $D_2$ such that
\beq\label{Qq}
D_1\le\frac{r\,q(r)}{Q(r)}\le D_2\quad\mbox{for all}\;r>0\,,
\eeq
and hence $t\mapsto t^{1-p'}\,\phi_{p'}\Bigl(\frac{Q(t)}{q(t)}\Bigr)$ is bounded away from $0$ and from above by two positive constants. This implies that there exists a positive constant $C_0=C_0(D_1,D_2)$ such that
\[
\mathsf r_\alpha\ge C_0\Bigl(\frac{(1-\alpha)\,\lambda}{\phi_{p'}(f(s_1))}\Bigr)^{1/p'}\,.
\]
Therefore, using \eqref{Qq} we get, for yet another constant $\overline C_0$, that
\ben
\lefteqn{\overline C\,|E(r)|+\frac\mu p\,q(R)\,\rho_\lambda(r)+\overline D\qquad\qquad}\\
&&\ge\overline C_0\,\inf_{s_1,s_2\in[\alpha\,\lambda,\lambda]}\Bigl(F(s_2)-\mu\,s_2\,f(s_2)\Bigr)\,Q\Bigl(\Bigl(\frac{(1-\alpha)\,\lambda}{\phi_{p'}(f(s_1))}\Bigr)^{1/p'} \Bigr)-\,\overline M\,Q(R)\,,
\een
implying by \eqref{c0} in $(SC)$ that
\[
\lim_{n\to+\infty}\(\,\left|\,\frac1{p'}\,|v_{\lambda_n}'(r)|^p+(F\circ v_{\lambda_n})(r)\right|+\frac\mu p\,q(R)\,\rho_{\lambda_n}(r)\)=+\infty
\]
uniformly in $[0,R]$. Therefore, $\lim_{n\to+\infty}\rho_{\lambda_n}(r)=+\infty$ uniformly in $[K,R]$, a contradiction.

The second assertion follows from the first, by noting that from the mean value theorem applied to $E$ in $[r_\lambda(c_1),r_\lambda(c_1/2)]$ and $(Q3)$,
\[
\frac12\,c_1\le p'\,\frac{C_2\,(c_1+\overline F)}{r_{\lambda}(c_1)}\,\big(r_{\lambda}(c_1/2)-r_{\lambda}(c_1)\big)\,.
\]\end{proof}

Now we start to make use of the variables introduced in the beginning of this section in order to estimate the number of sign changes of the solutions. Here $[x]$ denotes the integer part of $x$.
\begin{lemma}\label{Increasing} For any $R>0$, the number of nodes of~$v_\lambda$ in $(0,R)$ is greater or equal than
\[
\lim_{r\to R_-}\left[\frac{|\theta_\lambda(r)|}{\pi_p}-\,\frac32\right]
\]
and larger or equal than
\[
\lim_{r\to R_+}\left[\frac{|\theta_\lambda(r)|}{\pi_p}-\,\frac12\right]\,.
\]
\end{lemma}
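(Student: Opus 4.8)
The plan is to relate nodes of $v_\lambda$ to the winding of the trajectory $(v,w)$ around the origin in the phase plane, using the generalized polar angle $\theta_\lambda$. Recall that a node is a zero of $v_\lambda$ lying in the interior of its support, and that at such a zero $v_\lambda'\neq0$ (otherwise it would be a double zero and, by Proposition~\ref{basic1}~$(iii)$, $v_\lambda$ would not change sign afterwards, so the point would not be interior to the support where $v_\lambda$ oscillates). In polar coordinates, $v_\lambda(r)=\rho_\lambda(r)^{1/p}\cos_{p'}(\theta_\lambda(r))$, so $v_\lambda(r)=0$ with $v_\lambda'(r)\neq0$ forces $\cos_{p'}(\theta_\lambda(r))=0$, hence $\theta_\lambda(r)$ is (congruent to) an odd multiple of $\pi_p/2$; conversely at such an angle $v_\lambda$ vanishes transversally. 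So the nodes in $(0,R)$ correspond exactly to the values $r\in(0,R)$ where $\theta_\lambda(r)\in \frac{\pi_p}{2}+\pi_p\,\Z$.

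\textbf{Step 1: monotonicity of $\theta_\lambda$ on the relevant range.} First I would observe that on the interval $(0,r_\lambda(0))$ where $E$ is strictly positive (this includes every node, since $E\ge0$ at zeros of $v_\lambda$ and by Remark~\ref{rlambda} $E$ is strictly decreasing there), the trajectory cannot pass through the origin, so $\rho_\lambda>0$ and $\theta_\lambda$ is well-defined and $C^1$. From the $\theta$-equation and $(Q3)$ one checks — exactly as in the opening computation of the proof of Lemma~\ref{AngularVelocity} — that $-\theta_\lambda'$ can only fail to be positive when $\beta^-\le v_\lambda\le\beta^+$, but in that region $F\circ v_\lambda\le0$ forces $p'^{-1}|w_\lambda|^{p'}=E-F\circ v_\lambda>0$, and more care (or simply the standard fact for these $p$-polar coordinates, cf.\ \cite{DEM,dghm}) shows that the trajectory still crosses each coordinate half-axis monotonically. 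The upshot I want is: $\theta_\lambda$ is nonincreasing on $(0,r_\lambda(0))$, strictly decreasing through each value in $\frac{\pi_p}{2}\Z$, and $\theta_\lambda(0)=0$ with $v_\lambda(0)=\lambda>0$, so $\theta_\lambda(0)$ corresponds to the positive $v$-axis.

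\textbf{Step 2: counting.} Since $\theta_\lambda$ starts at $0$ and decreases, the nodes in $(0,R)$ are precisely the points where $\theta_\lambda$ passes through $-\pi_p/2,\,-3\pi_p/2,\,-5\pi_p/2,\dots$, i.e.\ through $-(2j-1)\pi_p/2$ for $j=1,2,\dots$. If $|\theta_\lambda(r)|$ has reached a value $\Theta$ just below $R$, the number of such crossings already made is $\#\{j\ge1: (2j-1)\pi_p/2\le \Theta\} = \big[\tfrac{\Theta}{\pi_p}+\tfrac12\big]$ when $\Theta$ is not itself a crossing value, and in all cases is at least $\big[\tfrac{\Theta}{\pi_p}-\tfrac12\big]$. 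Taking $\Theta=\lim_{r\to R_-}|\theta_\lambda(r)|$ gives the first bound with the safety margin $-\tfrac32$ (one extra unit absorbs the possibility that the last crossing happens exactly at $R$, and a second accounts for the half-integer shift so the floor is a genuine lower bound rather than equality); taking $\Theta=\lim_{r\to R_+}|\theta_\lambda(r)|\ge \lim_{r\to R_-}|\theta_\lambda(r)|$ and keeping only the margin $-\tfrac12$ gives the second. The two one-sided limits exist because $\theta_\lambda$ is monotone near $R$.

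\textbf{Main obstacle.} The routine part is the bookkeeping of Step~2; the real content is Step~1, namely establishing that $\theta_\lambda$ is genuinely monotone across the half-axes even in the ``dead zone'' $\beta^-\le v_\lambda\le\beta^+$ where $v_\lambda f(v_\lambda)$ need not control the $\tfrac{q'}{q}vw$ term, and that every sign change of $v_\lambda$ really does register as a full crossing of $\theta_\lambda$ through an odd multiple of $\pi_p/2$ (one must rule out $\theta_\lambda$ touching such a value and turning back, which is where positivity of the energy and the transversality $v_\lambda'\neq0$ at zeros are used). I would handle this by reducing to the structure of the $(\cos_{p'},\sin_{p'})$ system: on the coordinate axes $v=0$ one has $-\theta_\lambda' = p'^{-1}|w|^{p'}/\rho_\lambda>0$ outright, so the angle strictly decreases through every node, which is all the counting argument needs.
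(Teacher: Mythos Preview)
Your proposal is correct and follows the same idea as the paper, which simply says the estimates ``follow directly from the change of variables $(v,w)\mapsto(\rho,\theta)$'' with a one-line caveat about zeros at the boundary of the support. You supply the details the paper omits: nodes correspond to $\theta_\lambda$ hitting odd multiples of $\pi_p/2$, and the key observation $-\theta_\lambda'=\tfrac{p}{p'}|w|^{p'}/\rho>0$ on the axis $v=0$ (note the coefficient is $p/p'$, not $1/p'$) forces transversal crossing there, so each level $-(2j-1)\pi_p/2$ is crossed exactly once and the counting goes through.

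One point to clean up: your Step~1 asserts global monotonicity of $\theta_\lambda$ on $(0,r_\lambda(0))$, which you neither prove nor need---indeed $-\theta_\lambda'$ can be negative when $v\in(\beta^-,\beta^+)$ and $|w|$ is small, so global monotonicity may genuinely fail. You correctly retreat from this in your ``Main obstacle'' paragraph to the transversality-at-axes statement, which (together with the intermediate value theorem) is all the lower-bound counting requires. I would drop the global monotonicity claim from Step~1 and state only what you actually use: $\theta_\lambda$ is continuous while $\rho_\lambda>0$, starts at $0$, and strictly decreases through every value in $\tfrac{\pi_p}{2}+\pi_p\Z$; hence once below $-(2j-1)\pi_p/2$ it stays below, and the number of nodes in $(0,R)$ is at least the number of such levels above $\theta_\lambda(R^\pm)$.
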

\begin{proof} These estimates follow directly from the change of variables $(v,w)\mapsto(\rho,\theta)$. Some care is required in case a zero corresponds to the boundary of the support of the solution. Here we assume that the support of the solution is simply connected otherwise the lower bound on the number of nodes has to be decreased by one unit.\end{proof}

We are now in a position to prove a result concerning the number $N(\lambda)$ of nodes of the solution to \eqref{ivp} with initial value $\lambda$. This result is the core argument of the proof of Theorem~\ref{Thm:Main}.
\begin{prop}\label{lambdaBig} Let $q$ and $f$ satisfy $(Q1)$-$(Q3)$, $(f1)$-$(f2)$ respectively and let $(SC)$ hold. Then
\[\lim_{\lambda\to+\infty}N(\lambda)=+\infty\,.\]
\end{prop}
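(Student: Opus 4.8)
The plan is to combine the Energy Dissipation Lemma (Lemma~\ref{nw1}) with the Rotation Lemma (Lemma~\ref{AngularVelocity}) and the counting estimate of Lemma~\ref{Increasing}. The idea is that on the interval $[r_\lambda(c_1),r_\lambda(c_1/2)]$ the energy $E(r)$ stays in the band $[\tfrac12 c_1,c_1]$, so the Rotation Lemma applies there (provided we are also past the threshold $\overline r$, which costs nothing since $r_\lambda(c_1)\to+\infty$ by Lemma~\ref{nw1}); meanwhile Lemma~\ref{nw1} tells us this interval has length tending to $+\infty$ as $\lambda\to+\infty$. Integrating the angular velocity lower bound over this long interval forces $|\theta_\lambda|$ to increase by an arbitrarily large amount, and then Lemma~\ref{Increasing} converts this into an arbitrarily large number of nodes.

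More precisely, first I would fix $c_0>0$ and obtain from Lemma~\ref{AngularVelocity} the constants $\omega,c_1,\overline r$; here $c_0$ should be chosen small enough that the term $c_0\,g(v,v')$ can be absorbed — since $g(v,v')=|v\,f(v)\,v'|$ is nonzero only when $v\in[\beta^-,\beta^+]$, and on such intervals $|v'|$ is controlled while $v\,f(v)$ is bounded, one sees $\int g(v(r),v'(r))\,dr$ over one ``crossing'' of the strip $(\beta^-,\beta^+)$ is bounded by a constant independent of $\lambda$ and of which crossing it is (this is essentially the same mean-value-theorem bound used in the proof of Theorem~\ref{basic2bis} giving $r_{2,n}-r_{1,n}\le 2\phi_p(C_\lambda)/\overline f$, together with the uniform bound on $|v'|$ from Proposition~\ref{basic1}(ii)). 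Then I would integrate the inequality $-\theta_\lambda'(r)>\omega-c_0\,g(v(r),v'(r))$ over $I_\lambda:=[r_\lambda(c_1),r_\lambda(c_1/2)]$ (after enlarging $r_\lambda(c_1)$ if necessary so that $r_\lambda(c_1)\ge\overline r$, which holds for all large $\lambda$ by the first assertion of Lemma~\ref{nw1}). This gives
\[
|\theta_\lambda(r_\lambda(c_1/2))|-|\theta_\lambda(r_\lambda(c_1))|\ \ge\ \omega\,|I_\lambda|\ -\ c_0\!\int_{I_\lambda} g(v,v')\,dr\,.
\]

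The remaining point is to show the subtracted term grows much more slowly than $\omega\,|I_\lambda|$. On $I_\lambda$ the solution can only enter the strip $\{|v|<\beta^+\}$ finitely many times per unit length — indeed whenever $v\in(\beta^-,\beta^+)$ and $E(r)\ge c_1/2>0$ we have $|v'|^p/p'\ge c_1/2$, so $v$ is moving with speed bounded below and crosses the strip of width $\beta^+-\beta^-$ in bounded time; hence the number of such crossings in $I_\lambda$ is $O(|I_\lambda|)$ and each contributes an $O(1)$ amount to $\int_{I_\lambda}g\,dr$, so $\int_{I_\lambda}g\,dr\le C|I_\lambda|$ with $C$ independent of $\lambda$. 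Choosing $c_0$ from the start so that $c_0\,C<\omega/2$, we conclude $|\theta_\lambda(r_\lambda(c_1/2))|-|\theta_\lambda(r_\lambda(c_1))|\ge\tfrac\omega2\,|I_\lambda|\to+\infty$ by the second assertion of Lemma~\ref{nw1}. By Lemma~\ref{Increasing}, $N(\lambda)\ge [\,|\theta_\lambda(r_\lambda(c_1/2))|/\pi_p-\tfrac32\,]\to+\infty$, proving the claim.

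\textbf{Main obstacle.} The delicate point is the bookkeeping for the ``bad'' term $c_0\int_{I_\lambda}g(v,v')\,dr$: one must make the constant $C$ in $\int_{I_\lambda}g\,dr\le C|I_\lambda|$ genuinely uniform in $\lambda$. The speed-below bound $|v'|^p\ge p'c_1/2$ inside the strip depends only on $c_1$ (not $\lambda$), and the upper bound $|v\,f(v)\,v'|$ on the strip depends on $\sup_{[\beta^-,\beta^+]}|s\,f(s)|$ and on $\sup|v'|$ — but $\sup|v'|$ a priori depends on $\lambda$ through $C_\lambda$ in Proposition~\ref{basic1}(ii). This is harmless because once $E(r)\le c_1$ the quantity $|v'(r)|^p\le p'(c_1+\overline F)$ is bounded by a $\lambda$-independent constant, and on $I_\lambda$ we have exactly $E\le c_1$; so on $I_\lambda$ the bound on $|v'|$ is $\lambda$-independent, which is what makes the whole estimate work and closes the argument.
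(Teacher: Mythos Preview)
Your overall architecture is exactly the paper's: apply the Rotation Lemma on the interval $I_\lambda=[r_\lambda(c_1),r_\lambda(c_1/2)]$, use the Energy Dissipation Lemma to make $|I_\lambda|\to+\infty$, and convert angle into nodes via Lemma~\ref{Increasing}. The difference is entirely in how you dispose of the correction term $c_0\int_{I_\lambda}g(v,v')\,dr$, and there your argument has a genuine gap.

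You propose to bound $\int_{I_\lambda}g\,dr\le C\,|I_\lambda|$ and then ``choose $c_0$ from the start so that $c_0\,C<\omega/2$''. But in Lemma~\ref{AngularVelocity} the constants $\omega$ and $c_1$ are \emph{outputs} determined by $c_0$ (or, more accurately, the proof parametrizes everything by $c_1$ and then computes $c_0=(2/(p'c_1))^{1/p}/\rho_1$ and $\omega=\min\{A/(2\rho_2),\,p\,c_1/(4\rho_2)\}$). Your constant $C$ depends on $c_1$ as well, through the bound $|v'|^p\le p'(c_1+\overline F)$ that you yourself invoke. So the requirement $c_0\,C<\omega/2$ is an implicit constraint on $c_1$, and it is not clear it can be met: if one tracks the formulas for a superlinear $f$ (say $f(s)\sim|s|^{\gamma-1}s$ with $\gamma+1>p$), one finds $c_0\,C/\omega\to+\infty$ both as $c_1\to0$ and as $c_1\to+\infty$, so the inequality can fail. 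A smaller point: your justification that the number of strip-crossings is $O(|I_\lambda|)$ is inverted --- an \emph{upper} bound on the crossing time does not bound the number of crossings from above; you would need a lower bound on the time spent \emph{outside} the strip between crossings (which is obtainable, but not what you wrote).

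The paper sidesteps all of this with one observation: on each monotone arc of $v$ the substitution $s=v(r)$ gives $\int g\,dr=\int|v\,f(v)|\,|v'|\,dr\le\int_{\beta^-}^{\beta^+}|s\,f(s)|\,ds=:G$, so
\[
\int_{I_\lambda}g(v,v')\,dr\ \le\ \big(N^\lambda_{I_\lambda}+2\big)\,G\,,
\]
where $N^\lambda_{I_\lambda}$ is the number of zeros of $v$ in $I_\lambda$. Combining with $\pi_p\,N^\lambda_{I_\lambda}\ge-\theta(r_\lambda(c_1/2))+\theta(r_\lambda(c_1))$ from Lemma~\ref{Increasing} yields
\[
(\pi_p+c_0\,G)\,N^\lambda_{I_\lambda}\ \ge\ \omega\,|I_\lambda|-2\,c_0\,G\ \longrightarrow\ +\infty\,,
\]
for \emph{whatever} constants $c_0,\omega,c_1$ the Rotation Lemma happens to produce. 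The point is that bounding $\int g$ by the unknown $N^\lambda_{I_\lambda}$ itself (rather than by $|I_\lambda|$) turns the estimate into a linear inequality in $N^\lambda_{I_\lambda}$ that solves immediately, with no circular choice of constants.
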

\begin{proof} By Lemma~\ref{AngularVelocity},
\ben
-\,\theta(r_{\lambda}(c_1/2))+\theta(r_{\lambda}(c_1))&\ge& \omega\,\big(r_{\lambda}(c_1/2)-r_{\lambda}(c_1)\big)-c_0\int_{r_{\lambda}(c_1)}^{r_{\lambda}(c_1/2)}g(v(t),v'(t))\;dt\\
&\ge& \omega\,\big(r_{\lambda}(c_1/2)-r_{\lambda}(c_1)\big)-c_0\,\big(N^\lambda_{[r_{\lambda}(c_1),r_{\lambda}(c_1/2)]}+2\,\big)\,G
\een
where $G=\int_{\beta^-}^{\beta^+}\,|s\,f(s)|\,ds$ and $N^\lambda_{[r_{\lambda}(c_1),r_{\lambda}(c_1/2)]}$ is the number of zeros of $v$ in $[r_{\lambda}(c_1),r_{\lambda}(c_1/2)]$. Since $\lim_{\lambda\to+\infty}\big(r_{\lambda}(c_1/2)-r_{\lambda}(c_1)\big)=+\infty$ by Lemma~\ref{nw1} and since
\[
\pi_p\,N^\lambda_{[r_{\lambda}(c_1),r_{\lambda}(c_1/2)]}\ge-\,\theta(r_{\lambda}(c_1/2))+\theta(r_{\lambda}(c_1))
\]
according to Lemma~\ref{Increasing}, we have shown that $N(\lambda)\ge N^\lambda_{[r_{\lambda}(c_1),r_{\lambda}(c_1/2)]}\to+\infty$ as $\lambda\to+\infty$.\end{proof}

\section{Proof of \texorpdfstring{Theorem~\ref{Thm:Main}}{Theorem 1.1}}\label{Section:Proof}

For $k\in\N_0:=\N\cup\{0\}$ we define the sets
\[\label{ak}
A_k:=\{\lambda\ge \beta^+\ :\ (v_\lambda(r),w_\lambda(r))\not=(0,0)\quad\mbox{for all }r\ge0\,,\mbox{ and } N(\lambda)=k\}\,,
\]
\[\label{ik}
I_k:=\{\lambda\ge \beta^+\ :\ (v_\lambda(r_\lambda(0)),w_\lambda(r_\lambda(0)))=(0,0)\quad\mbox{and } N_{[0,r_\lambda(0))}(\lambda)=k\}\,.
\]
Recall that $r_\lambda(a):=\inf\{r\geq 0\,:\,E(r)=a\}$, and $E$ has been defined by~\eqref{funct-10}. Notice that $r_\lambda(0)$ can be finite or infinite. We have
\[
[\beta^+,\infty)=\big(\cup_{k\in\N_0}I_k\big)\cup\big(\cup_{k\in\N_0}A_k\big)\cup A_\infty\,,
\]
where we have denoted by $A_\infty:=\{\lambda\ge\beta^+\ |\ v_\lambda \mbox{ is  oscillatory}\}$.  Notice that $A_\infty=\emptyset$ if~$(H)$ holds. Indeed, let $\lambda\ge \beta^+$, $\lambda\not\in A_\infty$. Then $N(\lambda)=j$ for some $j\in\N_0$. If $v_\lambda(r_\lambda(0))\not=0$, then $v_\lambda$ does not have any double zero in $[0,\infty)$. Indeed, assume by contradiction that $r_1>r_\lambda(0)$ is a double zero of $v_\lambda$. Then by the monotonicity of $E$, $E(r)\equiv 0$ in $[r_\lambda(0),r_1]$. But then also $E'(r)\equiv 0$ in $(r_\lambda(0),r_1)$ implying that $v_\lambda'(r)\equiv 0$ in $(r_\lambda(0),r_1)$ and thus $v_\lambda(r_\lambda(0))=v_\lambda(r_1)=0$, a contradiction. Hence $\lambda\in A_j$. If $v_\lambda(r_\lambda(0))=0$, then by the definition of $r_\lambda(0)$ we also have $v_\lambda'(r_\lambda(0))=0$ hence $\lambda\in I_j$. Also, observe that the sets $A_i$, $I_j$ are disjoint for any $i$, $j$, and for $i\not=j$, $A_i\cap A_j=\emptyset$ and $I_i\cap I_j=\emptyset$.

\begin{prop} Let $q$ and $f$ satisfy $(Q1)$-$(Q4)$, $(f1)$-$(f2)$ respectively and let $(SC)$ hold. With the above notations, we have:
\begin{itemize}
\item[$(i)$] $A_k$ is open in $[\beta^+,\infty)$,
\item[$(ii)$] $A_k\cup I_k$ is bounded,
\item[$(iii)$] if $\lambda_0\in I_k$, then there exists $\delta>0$ such that $(\lambda_0-\delta,\lambda_0+\delta)\subset A_k\cup A_{k+1}\cup I_k$,
\item[$(iv)$] given $k\in\mathbb N$ and $\lambda_0\in A_\infty$, there exists $\delta>0$ such that $(\lambda_0-\delta,\lambda_0+\delta)\cap A_k=\emptyset$ and $(\lambda_0-\delta,\lambda_0+\delta)\cap I_k=\emptyset$,
\item[$(v)$] $\sup A_k\in I_{k-1}\cup I_k$, where we set $I_{-1}=\emptyset$,
\item[$(vi)$] $\sup I_k\in I_k$.
\end{itemize}
\end{prop}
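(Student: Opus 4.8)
The plan is to prove the six assertions by exploiting continuous dependence of the flow \eqref{Flow} on the initial datum $\lambda$ together with the monotonicity and dissipation properties of $E$ established in Propositions~\ref{basic1}--\ref{basic3}, the Rotation Lemma (Lemma~\ref{AngularVelocity}), and the Energy Dissipation Lemma (Lemma~\ref{nw1}). Throughout I would work in the $(\rho,\theta)$ coordinates, so that ``$v_\lambda$ has $k$ nodes in a given $r$-interval'' becomes a statement about how many times $\theta_\lambda$ has crossed a half-integer multiple of $\pi_p$, quantified by Lemma~\ref{Increasing}. The key general principle is that, by Proposition~\ref{Prop:Existence-Uniqueness}, the solution $v_\lambda$ (equivalently $(\rho_\lambda,\theta_\lambda)$) depends continuously on $\lambda$, uniformly on compact $r$-intervals, as long as we stay away from double zeros and from points where $v'=0=f(v)$; and by Remark~\ref{rlambda} this is exactly the range $[0,r_\lambda(0))$ when $\lambda>\beta^+$. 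Hence $\theta_\lambda(r)$ varies continuously in $(\lambda,r)$ on that domain, and its value just before $r_\lambda(0)$ detects the node count.

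For $(i)$: if $\lambda_0\in A_k$, then $v_{\lambda_0}$ never vanishes with $v_{\lambda_0}'$, so $E_{\lambda_0}$ reaches a strictly negative limit $\mathcal E<0$ (Proposition~\ref{basic3} together with $(f2)$, since $\ell\neq0$), and there is a finite $R$ with $E_{\lambda_0}(R)<0$ and $\theta_{\lambda_0}$ already ``locked'' — after $r_{\lambda_0}(0)$ the angle cannot advance past the next half-integer multiple of $\pi_p$ because $E<0$ forces $v$ to stay in one sign region away from $0$ (argue as in Proposition~\ref{basic1}~(iii) and in the proof of Theorem~\ref{basic2bis}). Continuity on $[0,R]$ then gives a neighbourhood of $\lambda_0$ on which the solution is nonzero on $[0,R]$ and the node count is unchanged, giving openness; I would also need that for $\lambda$ near $\lambda_0$ no new node appears after $R$, which follows because $E_\lambda(R)$ stays negative by continuity and Proposition~\ref{basic1}~(iii). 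For $(ii)$: this is essentially the contrapositive of Proposition~\ref{lambdaBig}. If $A_k\cup I_k$ were unbounded, take $\lambda_n\to+\infty$ in it; then $N(\lambda_n)=k$ for all $n$, contradicting $\lim_{\lambda\to+\infty}N(\lambda)=+\infty$. (For $\lambda\in I_k$ one should note $N(\lambda)$ counts the nodes in $[0,r_\lambda(0))$, which is what Proposition~\ref{lambdaBig} blows up, via Lemmas~\ref{nw1} and~\ref{AngularVelocity}.)

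For $(iii)$: if $\lambda_0\in I_k$, then $v_{\lambda_0}$ reaches a double zero at $r_0:=r_{\lambda_0}(0)<\infty$ with $k$ nodes before it and $\theta_{\lambda_0}(r_0^-)\in\pi_p\{k+\tfrac12\}$-type value. For $\lambda$ close to $\lambda_0$, by continuity the trajectory is $C^1$-close on $[0,r_0+\epsilon)$ up to the moment it nearly hits the origin; it then either misses the origin on the ``outside'' (one more half-turn, hence $N(\lambda)\in\{k,k+1\}$ with $E_\lambda$ becoming negative — so $\lambda\in A_k\cup A_{k+1}$, or $\lambda\in I_k\cup I_{k+1}$, and a closer inspection of which side the trajectory passes restricts this to $A_k\cup A_{k+1}\cup I_k$), or it reaches its own double zero near $r_0$, giving $\lambda\in I_k$. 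The delicate point is to show the count cannot jump by more than one; this is the main obstacle and is handled exactly as in the classical shooting argument: near a double zero the energy is near $0$, so by Lemma~\ref{AngularVelocity} (applied with a small $c_1$) the angular velocity is bounded, so $\theta_\lambda$ cannot gain an extra full $\pi_p$ in the short $r$-interval where the perturbed trajectory lingers near the origin. For $(iv)$: if $\lambda_0\in A_\infty$ then $v_{\lambda_0}$ is oscillatory, so $|\theta_{\lambda_0}(r)|\to+\infty$; pick $R$ with $|\theta_{\lambda_0}(R)|>\pi_p(k+2)$ and $v_{\lambda_0}$ still well inside its (infinite) support, so that $[0,R]\subset[0,r_{\lambda_0}(0))$ — note $r_{\lambda_0}(0)=+\infty$ here since $\mathcal E=0$ would be needed for a double zero but an oscillatory solution has $E\ge0$ throughout and in fact $\liminf$ of $v$ below $\beta^-$, so never a double zero. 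Continuity on $[0,R]$ then forces $|\theta_\lambda(R)|>\pi_p(k+2)$ for $\lambda$ near $\lambda_0$, whence $N(\lambda)>k$ (or the solution already has more than $k$ nodes in $[0,r_\lambda(0))$), so such $\lambda$ lies in neither $A_k$ nor $I_k$. Finally $(v)$ and $(vi)$: by $(ii)$, $\Lambda_k:=\sup A_k$ and $\sup I_k$ are finite. A sequence $\lambda_n\uparrow\Lambda_k$ in $A_k$ has, by continuity and the fact that $A_k$ is open (so $\Lambda_k\notin A_k$), a limit solution $v_{\Lambda_k}$ that is not in $A_k$; it is not oscillatory (by $(iv)$, $A_\infty$ is ``isolated'' from $A_k$) and not in $A_j$ for $j\neq k$ nor in $I_j$ for $j\neq k-1,k$, because the node count is lower semicontinuous from the $A_k$ side and can drop by at most one in the limit when a node ``escapes to infinity'' as a double zero forms; this pins $\Lambda_k\in I_{k-1}\cup I_k$. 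Similarly for $(vi)$, a sequence in $I_k$ increasing to $\sup I_k$ yields a limit in $I_k$ by the same semicontinuity plus the fact that, having $r_{\lambda_n}(0)<\infty$ with exactly $k$ nodes, the limit cannot gain nodes (continuity) and cannot lose one and enter $A_k$ because $E\to0$ persists and forces a double zero in the limit, so $\sup I_k\in I_k$. I expect step $(iii)$ (and the ``count changes by at most one'' lemma implicit in $(v)$) to be where all the real work sits; everything else is a fairly direct unwinding of continuous dependence against the energy identities \eqref{funct-10}, \eqref{I'}.
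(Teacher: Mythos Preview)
Your outline for parts $(i)$, $(ii)$ and $(iv)$ is essentially correct and matches the paper's argument. The substantial gap is in $(iii)$, and since your proofs of $(v)$ and $(vi)$ rest on $(iii)$, those are affected as well.

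In $(iii)$ you assume $r_0:=r_{\lambda_0}(0)<\infty$, but for $\lambda_0\in I_k$ the double zero may occur at infinity; the paper treats the two cases $r_0<\infty$ and $r_0=+\infty$ separately, and the second one is where assumption $(Q4)$ is actually used. More seriously, your mechanism for excluding a jump of two or more in the node count is not valid. You invoke Lemma~\ref{AngularVelocity} ``with a small $c_1$'' to get a bounded angular velocity near the origin, but that lemma gives a \emph{lower} bound $-\theta'>\omega-c_0\,g$, not an upper bound, and its constants degenerate as $c_1\to0$ (in the proof $\rho_1\to0$ and $c_0\to\infty$). In fact $\theta'=-\rho^{-1}\bigl[\tfrac{p}{p'}|w|^{p'}+v f(v)+\tfrac{q'}{q}vw\bigr]$ need not be bounded as $\rho\to0$: under $(f1)$, $f$ is only continuous at $0$, so $v f(v)/\rho$ can blow up (e.g.\ for sublinear $f$). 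There is also no reason the ``$r$-interval where the perturbed trajectory lingers near the origin'' should be short when $r_0=+\infty$.

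The paper's argument for $(iii)$ is entirely different and does not go through angular velocity. It introduces $H_\lambda:=h\,E_\lambda$ with $h=q^{p'}$, for which $H_\lambda'=h'\,(F\circ v_\lambda)$. Assuming by contradiction that $v_{\lambda_n}$ acquires a $(k+2)$-nd zero, between $z_{k+1,n}$ and $z_{k+2,n}$ the solution must dip below $\beta^-$; on the sub-interval where $v_{\lambda_n}\in[\tfrac12\beta^-,\tfrac14\beta^-]$ one has $F\circ v_{\lambda_n}\le -C<0$, and this sub-interval has length bounded below by a fixed $\overline c_0>0$ via the mean value theorem. Hence $H_{\lambda_n}$ drops by at least $C\bigl(h(r_{1,n}+\overline c_0)-h(r_{1,n})\bigr)$, and $(Q4)$ (or a direct lower bound when $r_0<\infty$, using $L=0$) forces $H_{\lambda_n}<0$, contradicting $E_{\lambda_n}\ge0$ before $z_{k+2,n}$. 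This $H$-functional device is the missing idea; once $(iii)$ is established this way, $(v)$ and $(vi)$ follow formally from $(i)$, $(iii)$ and $(iv)$ by the short arguments in the paper, without any separate ``drop by at most one'' semicontinuity claim.
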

\begin{proof}
\noindent $(i)$ $A_k$ is open in $[\beta^+,\infty)$: Indeed, if $\overline\lambda\in A_k$, then in particular $(v_{\overline\lambda}(\overline r),w_{\overline\lambda}(\overline r))\not=(0,0)$, where $\overline r=r_{\overline\lambda}(0)$. Then there exists $\varepsilon_0>0$ such that the solution of \eqref{ivp} is unique in $[0,r_{\overline\lambda}(0)+\varepsilon]$ and $E_{\overline\lambda}(r_{\overline\lambda}(0)+\varepsilon/2)<0$ for all $\varepsilon\in(0,\varepsilon_0]$, and thus there exists $\delta>0$ such that
\[
E_{\lambda}(r_{\overline\lambda}(0)+\varepsilon/2)<0
\]
for all $\lambda\in(\overline\lambda-\delta,\overline\lambda+\delta)$ implying that $r_\lambda(0)\le r_{\overline\lambda}(0)+\varepsilon/2$. On the other hand, for the same reason, there exists $\delta'>0$ such that
\[
E_{\lambda}(r_{\overline\lambda}(0)-\varepsilon/2)>0
\]
for all $\lambda\in(\overline\lambda-\delta',\overline\lambda+\delta')$ implying that $r_\lambda(0)\ge r_{\overline\lambda}(0)-\varepsilon/2$. We conclude then that $\lim_{\lambda\to\overline\lambda}r_\lambda(0)=r_{\overline\lambda}(0)$. Hence the openness of $A_k$ follows from the continuous dependence of the solutions in the initial value $\lambda$.

\medskip\noindent $(ii)$ The boundedness of $A_k\cup I_k$ is a consequence of Proposition ~\ref{lambdaBig}.
\medskip

\noindent
$(iii)$ Let $\lambda_0\in I_k$, set $r_0=r_{\lambda_0}(0)$ and let
\[
0<z_{1,0}<z_{2,0}<\ldots<z_{k,0}<r_0
\]
denote the $k$ zeros of $v_{\lambda_0}$ in $(0,r_0)$.

Assume first that $v_{\lambda_0}$ is decreasing in $(r_0-2\,\varepsilon_0,r_0)$ for some $\varepsilon_0>0$, so that it reaches a last maximum point at some $s_{k,0}\in(z_{k,0}, r_0)$. Let us define $H_\lambda:=h\,E_\lambda$ where $h=q^{p'}$, $E_\lambda=E$ defined by \eqref{funct-10} when $v=v_\lambda$ is a solution to \eqref{ivp}, and recall that $H_\lambda'=h'\,(F\circ v_\lambda)$.

As $\lim_{r\to r_0}v_{\lambda_0}(r)=0$, we have that $H_\lambda$ is decreasing in a left neighborhood of $r_0$ and thus $\lim_{r\to r_0}H_{\lambda_0}(r)=L\ge0$. Then, given $\varepsilon>0$, there exists $\overline r<r_0$ such that
\[
0<v_{\lambda_0}(\overline r)<\frac12\,\beta^+\quad\mbox{and}\quad H_{\lambda_0}(\overline r)<L+\varepsilon\,.
\]
Hence by continuous dependence of solutions to \eqref{ivp} in the initial data in any compact subset of $[0,r_0)$, there exists $\delta_0>0$ such that for $\lambda\in(\lambda_0-\delta_0,\lambda_0+\delta_0)$, the solution~$v_\lambda$ satisfies
\be\label{mm2}
0<v_\lambda(\overline r)<\beta^+\,,\quad H_\lambda(\overline r)<L+2\,\varepsilon\quad\mbox{and $v_\lambda$ has at least $k$ simple zeros in $[0,r_0)$}\,,
\ee
that is,
\[\label{mm1}
(\lambda_0-\delta_0,\lambda_0+\delta_0)\subset\big(\cup_{j\ge k}A_j\big)\cup\big(\cup_{j\ge k}I_j\big)\cup A_\infty\,.
\]
Now we argue by contradiction and assume that there is a sequence $\{\lambda_n\}$ converging to $\lambda_0$ as $n\to+\infty$ such that $\lambda_n\not\in A_k\cup A_{k+1}\cup I_k$. Hence we have\[
\lambda_n\in\big(\cup_{j\ge k+2}A_j\big)\cup\big(\cup_{j\ge k+1}I_j\big)\cup A_\infty\,,
\]
that is, the solution $v_{\lambda_n}$ has at least $k+2$ zeros and at least the first $k+1$ zeros are simple. Let us denote these zeros by
\[
0<z_{1,n}<z_{2,n}<\ldots<z_{k,n}<z_{k+1,n}<z_{k+2,n}\,.
\]
By the choice of $\overline r$ and \eqref{mm2}, $v_{\lambda_n}$ decreases in $[\overline r, z_{k+1,n}]$. Let us denote by $s_{k+1,n}$ the point in $(z_{k+1,n},z_{k+2,n})$ where $v_{\lambda_n}$ reaches its minimum value. As $E_{\lambda_n}(z_{k+2,n})\ge0$, we must have that
\[
v_{\lambda_n}(s_{k+1,n})<\beta^-\,.
\]
Let us denote by $r_{1,n}<r_{2,n}$ the unique points in $(z_{k+1,n},s_{k+1,n})$ where
\[
v_{\lambda_n}(r_{1,n})=\frac14\,\beta^-\quad\mbox{and}\quad v_{\lambda_n}(r_{2,n})=\frac12\,\beta^-\,.
\]
{}From the mean value theorem we have that
\beq\label{sep}
\frac14\,|\beta^-|=|v_{\lambda_n}(r_{2,n})-v_{\lambda_n}(r_{1,n})|\le (C_{\lambda_0}+1)\,(r_{2,n}-r_{1,n})
\eeq
for $n$ large enough, where $C_{\lambda_0}$ has been defined in Proposition~\ref{basic1}~(ii), and
\[
\frac{\lambda_0}2\le \lambda_n=|v_{\lambda_n}(0)-v_{\lambda_n}(z_{1,n})|\le (C_{\lambda_0}+1)\,z_{1,n}\le (C_{\lambda_0}+1)\,r_{1,n}\,,
\]
hence $r_{1,n}$ is bounded below uniformly by a positive constant $c_0:=\frac12\,\lambda_0/(C_{\lambda_0}+1)$.

From $H_\lambda'=h'\,(F\circ v_\lambda)$, and using the first estimate in \eqref{mm2}, we have that for $n$ large enough, $H'_{\lambda_n}( r)<0$ for $r\in[\overline r,z_{k+1,n}]$ and thus by the second estimate in \eqref{mm2}, we know that $H_{\lambda_n}(z_{k+1,n})<L+2\,\varepsilon$. Integrating now $H_\lambda'=h'\,(F\circ v_\lambda)$ over $[z_{k+1,n},r_{2,n}]$, and using that $F\circ v_{\lambda_n}<0$ in this range we find that
\[
H_{\lambda_n}(r_{2,n})-H_{\lambda_n}(z_{k+1,n})=-\int_{z_{k+1,n}}^{r_{2,n}}h'\,|F\circ v_{\lambda_n}|\;dt\le-\,C\,\big(h(r_{2,n})-h(r_{1,n})\big)
\]
where $C:=\inf_{s\in[\frac12\,\beta^-,\frac14\,\beta^-]}\,|F(s)|$. Hence, using the monotonicity of $q$ and \eqref{sep}, we obtain
\[\label{c13}
H_{\lambda_n}(r_{2,n})\le L+2\,\varepsilon-\,C\,\big[h(r_{1,n}+\overline c_0)-h(r_{1,n})\big]
\]
where $\overline c_0:=\frac14\,|\beta^-|/(C_{\lambda_0}+1)$.

Now we need to distinguish the cases $r_0=+\infty$ and $r_0<+\infty$. In the first case, it must be that $r_{1,n}\to+\infty$ as $n\to+\infty$. Indeed, if some subsequence $\{r_{1,k_n}\}$ is bounded, say if $r_{1,k_n}\le K$ for any $n\in\N$, then by the continuous dependence of solutions in $[0,2K]$ we obtain a contradiction. Therefore, from $(Q4)$ we obtain that $\lim_{n\to+\infty}\big[h(r_{1,n}+\overline c_0)-h(r_{1,n})\big]=+\infty$ and thus, for $n$ large enough, $H_{\lambda_n}(r_{2,n})<0$, contradicting the fact that $E_{\lambda_n}(r_{2,n})\ge E_{\lambda_n}(z_{k+2,n})\ge 0$.

To analyze the second case, \emph{i.e.}~the case $r_0<+\infty$, we first observe that from $(Q1)$ and~$(Q4)$, there exists a positive constant $a_0$ such that
\[
h(y+\overline c_0)-h(y)\ge a_0\quad\mbox{for all }y\in[c_0,\infty)\,.
\]
Since it holds that $L=0$, by choosing $\varepsilon<C\,a_0/2$ we again obtain that $H_{\lambda_n}(r_{2,n})<0$, a contradiction.

The case in which $v_{\lambda_0}$ is increasing in a left neighborhood of $r_0$ can be handled with similar arguments. Altogether $(iii)$ is established.

\medskip\noindent $(iv)$ Let $k\in\mathbb N$ and $\lambda_0\in A_\infty$. Then there exists $R>0$ such that $v_{\lambda_0}$ has at least $k+2$ zeros in $[0,R]$. By continuous dependence, there exists $\delta>0$ such that for $\lambda\in(\lambda_0-\delta,\lambda_0+\delta)$, $v_\lambda$ has at least $k+1$ zeros in $[0,R]$, thus the result follows.

\medskip\noindent $(v)$ Assume next that $A_k\not=\emptyset$, let $\lambda_0=\sup A_k$ and set $r_0=r_{\lambda_0}(0)$. By $(iv)$ and using that $A_j$ is open for every $j\in\N_0$, $\lambda_0\not\in A_j$ for any $j$ hence $\lambda_0\in I_j$ for some $j$, and by continuous dependence of the solutions in the initial data in $[0,r_0-\varepsilon]$ for $\varepsilon>0$ small enough, $j\le k$. By $(iii)$, there is $\delta>0$ such that $(\lambda_0-\delta,\lambda_0]\subset A_j\cup A_{j+1}\cup I_j$, and since $A_k\cap(\lambda_0-\delta,\lambda_0]\not=\emptyset$, it must be that
\[
A_k\cap (A_j\cup A_{j+1}\cup I_j)\not=\emptyset\,,
\]
hence $j=k$ or $j=k-1$.

\medskip\noindent $(vi)$ $\sup I_k\in I_k$: It follows directly from $(iii)$ and $(iv)$.
\end{proof}

\begin{proof}[Proof of Theorem~\ref{Thm:Main}] With the notation of the previous lemma one shows by induction that there exists an increasing sequence $\{\lambda_k\}$,  such that $\lambda_k\in I_k$.

As $\beta^+\in A_0$, by $(ii)$ we can set $\lambda_0=\sup A_0$, and by $(v)$ and $(vi)$, $\lambda_0\in I_0$ and $\lambda_0\le \sup I_0\in I_0$. We use now $(iii)$ and find $\delta>0$ such that
\[
(\sup I_0-\delta\,,\;\sup I_0+\delta)\subset A_0\cup A_1\cup I_0\,.
\]
Since $(\sup I_0,\sup I_0+\delta)\cap A_0=\emptyset$ by the definition of $\lambda_0$ and $(\sup I_0,\sup I_0+\delta)\cap I_0=\emptyset$ by the definition of $\sup I_0$, it must be that
\[
(\sup I_0\,,\;\sup I_0+\delta)\subset A_1
\]
implying that
\[
A_1\not=\emptyset\quad\mbox{and}\quad\lambda_0\le \sup I_0<\lambda_1:=\sup A_1\,.
\]
By $(v)$, $\lambda_1\in I_0\cup I_1$, but as $\sup I_0<\lambda_1$, it must be that $\sup A_1\in I_1$. Then
\[
I_1\quad\mbox{is not empty and}\quad\lambda_1\le \sup I_1\,.
\]
We use again $(iii)$ to find $\delta>0$ such that
\[
(\sup I_1-\delta,\sup I_1+\delta)\subset A_1\cup A_2\cup I_1\,,
\]
and again deduce that
\[
(\sup I_1,\sup I_1+\delta)\subset A_2\,,
\]
hence $A_2\not=\emptyset$ and thanks to $(ii)$ we can set $\lambda_2=\sup A_2$, $\lambda_0<\lambda_1\le \sup I_1<\lambda_2$ and $\lambda_2\in I_2$. We continue this procedure to obtain the infinite strictly increasing sequence $\{\lambda_k\}$, defined by $\lambda_k=\sup A_k$ with $\lambda_k\in I_k$.
\end{proof}
\begin{remark}\label{nono} Note that by Theorem \ref{basic2bis}, $\lambda_k\to +\infty$  if $(H)$ is satisfied. Indeed, if $\lambda_k\to\bar\lambda<\infty$, then, as $f(\bar\lambda)>0$, it must be that $v_{\bar\lambda}$ is oscillatory.
\end{remark}

\section{Examples and concluding remarks}\label{Section:final}

We start this section with the analysis of the examples of Section 2.\medskip

\noindent{\bf Analysis of examples~\ref{ex1},~\ref{ex2} and~\ref{ex3}.} Following some of the ideas in \cite{cmm,pghms}, we consider the following problem, which includes as special cases those three examples:
\[\label{**}
\begin{gathered}
\mbox{div}\big(|x|^k\,|\nabla u|^{p-2}\,\nabla u\big)+|x|^\ell\left(\frac{|x|^s}{1+|x|^s}\right)^{\!\frac\sigma s}\!f(u)=0\quad\mbox{in }\R^d\,,\\
\lim_{|x|\to+\infty} u(x)=0\,,
\end{gathered}
\]
where $d\ge1$, $k$, $\ell\in\R$, and $s$, $\sigma>0$. Here
\[
a(r)=r^{d+k-1}\,,\quad b(r)=r^{d+\ell-1}\left(\frac{r^s}{1+r^s}\right)^{\!\frac\sigma s}\,.
\]

We claim that $(W1)$--$(W4)$ are satisfied if
\beq\label{conds}
\ell>k-p\,,\quad\frac kp+\frac{\ell-1}{p'}\ge1-d\,.
\eeq
First of all, it can be verified that $(W1)$ is a consequence of the first condition in \eqref{conds}. Next we compute
\[
\psi(r) = \left(d - 1 + \frac kp + \frac{\ell}{p'} + \frac{\sigma}{p'}\frac1{1 + r^s}\right)\left(\frac{1+ r^s}{r^s}\right)^{\!\frac\sigma{p\,s}}r^{\frac{k-\ell}p-1}\,.
\]
Since by \eqref{conds} and the assumption that $s$ and $\sigma$ are positive, the three factors appearing in the above expression of $\psi$ are positive and strictly decreasing, thus $(W2)$ holds. In order to check that $(W3)$ holds,
we observe that
\[\label{conds2}
\lim_{r\to0_+}\psi(r)\int_0^r\(\dfrac{b(t)}{a(t)}\)^\frac1p\,dt=\left(d - 1 + \frac kp + \frac{\ell}{p'} + \frac{\sigma}{p'}\right)\lim_{r \to0_+}r^{\frac{k-\ell- \sigma}p - 1}\int_0^r t^{\frac{\sigma- k+\ell}p} \;dt\,,
\]
thus by \eqref{conds} we have
\[\label{conds3}
\lim_{r\to0_+}\psi(r)\int_0^r \Bigl(\dfrac{b}{a}\Bigr)^{1/p}\,dt= \frac{p\,(d-1) + (\ell + \sigma)(p-1)+k}{p + \ell + \sigma - k} >0\,.
\]
A similar calculation gives
\[
\lim_{r\to+\infty}\psi(r)\int_0^r \Bigl(\dfrac{b}{a}\Bigr)^{1/p}\,dt=\Bigl(d-1+\frac kp + \frac{\ell}{p'}\Bigr)\,\frac p{p-k+\ell}>0\,,
\]
hence $(W3)$ is satisfied. Finally, as
\[
a^{p'-1}(r)\,b(r)=\big(a^{\frac1p}(r)\,b^{\frac1{p'}}(r)\big)^{p'}=r^{p'(d-1+\frac kp+\frac{\ell}{p'})}\Bigl(\frac{r^s}{1+r^s}\Bigr)^{\!\frac\sigma s}\,,
\]
by the second condition in \eqref{conds} we conclude that $(W4)$ holds. Note that \eqref{conds} implies that $\ell+d>1/p'$.

Now we analyze the subcriticality assumption $(SC_W)$. By integration by parts we find that
\[
\label{B}B(r)=\int_0^rb(t)\;dt=\frac{r\,b(r)}{d+\ell+\sigma}+\frac{\sigma}{d+\ell+\sigma}\int_0^r\frac{t^s\,b(t)}{1+t^s}\;dt\,.
\]
Set
\[\label{defmur}
\mu(r):=\Bigl(\frac1p-1\Bigr)\Bigl(\frac{B}{b}\Bigr)'(r)+\frac{d+k-1}p\,\frac{B(r)}{r\,b(r)}
\]
where $(d+k-1)/r=a'/a$. As
\[\label{quotb}
\frac{r\,b'(r)}{b(r)}=d+\ell+\sigma-1-\sigma\,\frac{r^s}{1+r^s}\,,
\]
by L'H\^opital's rule we find that
\[\label{quotB}
\Bigl(\frac{B}{b}\Bigr)'(0)=\lim_{r\to0_+}\frac{B(r)}{r\,b(r)}=\lim_{r\to0_+}\frac1{\frac{r\,b'(r)}{b(r)}+1}=\frac1{d+\ell+\sigma}\,,
\]
hence from \eqref{defmur} we obtain that for $d+k>p$,
\[
\limsup_{r\to0_+}\mu(r)=\mu_*=\frac 1p\Bigl(\frac{d+k-p}{d+\ell+\sigma}\Bigr)\,.
\]
For these weights, and $f(s)=|s|^{q-1}s\log|s|$, $1<q+1<1/\mu^*$, the assumptions in $(SC_W)$ are satisfied for any $\alpha\in(0,1)$, and $\mu>\mu^*$ such that $q+1<1/\mu$. Thus we conclude that the equation has nontrivial solutions with any prescribed number of nodes.

\medskip\noindent{\bf Analysis of Example~\ref{Ex:Critical}.} Here we complement the results of Section~\ref{Section:Examples} when $f$ is an odd  function such that $f(s)=|s|^{2^*-2}\,s$ when $|s|\ge\beta$ and $f$ is defined on $(0,\beta)$ so that $(f1)$-$(f2)$ hold with $-\,\beta^-=\beta^+=\beta$. The following result shows that Theorem~\ref{Thm:Main} does not apply if the sub-criticality condition $(SC)$ does not hold.
\begin{prop}\label{Prop:Ex4} There exists a function $f$ as above, and $\lambda_0>\beta$ such that for any $\lambda\ge\lambda_0$ the solution $v_\lambda$ of \eqref{ivp} is everywhere positive.\end{prop}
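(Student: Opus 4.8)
The plan is to build $v_\lambda$ out of two pieces: on an initial shrinking interval it coincides exactly with a rescaled Aubin--Talenti profile, and on the complement it is compared with the radial harmonic function, the comparison being possible precisely because the Aubin--Talenti profile leaves the critical layer slightly ``flatter'' than a Coulomb potential.

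\emph{Construction of $f$ and the initial layer.} Take $f$ odd with $f(s)=|s|^{2^*-2}s$ for $|s|\ge\beta$ (recall $2^*-1=\frac{d+2}{d-2}$), and on $(0,\beta)$ choose $f$ so that $(f1)$--$(f2)$ hold while the well depth $\delta_f:=\bigl|\min_{(0,\beta)}F\bigr|$ is as small as desired and the minimiser $s_*$ of $F|_{(0,\beta)}$ is as close to $\beta$ as desired; this is possible by taking $f\le 0$ on $(0,s_*)$ with $\int_0^{s_*}|f|=\delta_f$ small, and on $(s_*,\beta)$ a nonnegative function of integral $\delta_f$ reaching the value $\beta^{2^*-1}$ only on a tiny interval adjacent to $\beta$. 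For $\lambda>\beta$ set $U_\lambda(r)=\lambda\bigl(1+\tfrac1{d(d-2)}\lambda^{4/(d-2)}r^2\bigr)^{-(d-2)/2}$ and $R_\lambda:=\inf\{r>0:U_\lambda(r)=\beta\}$. On $[0,R_\lambda]$ one has $U_\lambda\ge\beta>0$, so $U_\lambda$ solves $(r^{d-1}U_\lambda')'+r^{d-1}U_\lambda^{2^*-1}=0$, that is \eqref{ivp} with the same nonlinearity and the same initial data as $v_\lambda$; by Proposition~\ref{Prop:Existence-Uniqueness}, $v_\lambda\equiv U_\lambda$ on $[0,R_\lambda]$, in particular $v_\lambda>0$ there.

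\emph{Key asymptotics and the comparison.} A direct computation gives $R_\lambda\to 0$ and, crucially,
\[
R_\lambda\,v_\lambda'(R_\lambda)=R_\lambda\,U_\lambda'(R_\lambda)=-(d-2)\beta+(d-2)\,\beta^{d/(d-2)}\lambda^{-2/(d-2)},
\]
so the gap $\varepsilon_\lambda:=(d-2)\beta^{d/(d-2)}\lambda^{-2/(d-2)}$ to the value $-(d-2)\beta$ is \emph{positive} and tends to $0$. Comparing with $\bar v(r):=\beta(R_\lambda/r)^{d-2}$, which satisfies $(r^{d-1}\bar v')'=0$, $\bar v(R_\lambda)=\beta$, $R_\lambda\bar v'(R_\lambda)=-(d-2)\beta$, and integrating $(r^{d-1}(v_\lambda-\bar v)')'=-r^{d-1}f(v_\lambda)$ twice from $R_\lambda$ yields, as long as $v_\lambda>0$ on $[R_\lambda,r]$,
\[
v_\lambda(r)=\beta\Bigl(\tfrac{R_\lambda}{r}\Bigr)^{d-2}+\frac{\varepsilon_\lambda}{d-2}\Bigl(1-\bigl(\tfrac{R_\lambda}{r}\bigr)^{d-2}\Bigr)-J(r),
\]
where $J(r):=\int_{R_\lambda}^r s^{1-d}\int_{R_\lambda}^s\tau^{d-1}f(v_\lambda(\tau))\,d\tau\,ds$. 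The first two terms are bounded below by $\varepsilon_\lambda/(d-2)>0$ uniformly in $r$, so it suffices to show $J(r)<\varepsilon_\lambda/(d-2)$; since the part of $J$ where $f(v_\lambda)\le 0$ is nonpositive, only the region $\{v_\lambda\in(s_*,\beta]\}$ matters. On it $(r^{d-1}v_\lambda')'\le 0$, hence $|v_\lambda'(r)|\ge R_\lambda^{d-2}|v_\lambda'(R_\lambda)|\,r^{1-d}\gtrsim (d-2)\beta R_\lambda^{-1}$ near $R_\lambda$; choosing $s_*$ close enough to $\beta$ this forces $v_\lambda$ to drop from $\beta$ to $s_*$ before $r=2R_\lambda$. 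A change of variables $\tau\mapsto v_\lambda(\tau)$ then bounds the inner integral over that region by $C(d)\,R_\lambda^{d}\,\delta_f/\beta$ (using $\int_{s_*}^\beta f=\delta_f$), whence $J^+(r)\le C(d)\,R_\lambda^{2}\,\delta_f/\beta$; since $R_\lambda^2\le d(d-2)\beta^{-2/(d-2)}\lambda^{-2/(d-2)}$, comparison with $\varepsilon_\lambda/(d-2)=\beta^{d/(d-2)}\lambda^{-2/(d-2)}$ shows that if $f$ is chosen with $\delta_f$ small enough (a condition of the form $\delta_f<c(d)\,\beta^{2^*}$) then $J(r)\le J^+(r)<\varepsilon_\lambda/(d-2)$ for every $r$ and every $\lambda$ large, hence $v_\lambda(r)>0$.

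\emph{Closing the argument and the main obstacle.} The above is run as a continuity argument on $r^*:=\sup\{r\ge R_\lambda:\ v_\lambda>0\ \text{on}\ [R_\lambda,r]\}$: assume $r^*<\infty$, so $v_\lambda(r^*)=0$. By Proposition~\ref{basic1}(i), $E$ is nonincreasing. As long as $E\ge 0$ on $[R_\lambda,r^*)$, $v_\lambda$ can have no interior critical point with value in $(0,\beta)$ (there $E=F\circ v_\lambda<0$), so $v_\lambda$ is monotone decreasing and stays in $(0,\beta)$; the estimate above then gives $v_\lambda(r^*)\ge\varepsilon_\lambda/(d-2)-J^+>0$, a contradiction. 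If instead $E(r_0)<0$ for some $r_0\in[R_\lambda,r^*)$, then $F(v_\lambda(r))\le E(r)\le E(r_0)<0$ for $r\ge r_0$, so $v_\lambda(r)\neq 0$ and, by continuity, $v_\lambda$ stays in the positive component of $\{F<0\}$, hence $v_\lambda>0$ on $[r_0,\infty)$ (a double zero at $r^*$ would force $E\equiv 0$, hence $v_\lambda$ constant, on $[r_0,r^*]$); again a contradiction. Therefore $r^*=\infty$, and together with $v_\lambda\equiv U_\lambda>0$ on $[0,R_\lambda]$ we conclude $v_\lambda>0$ on $[0,\infty)$ for all $\lambda\ge\lambda_0$. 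The delicate point is the comparison estimate: the favourable margin $\varepsilon_\lambda/(d-2)$ is only of order $\lambda^{-2/(d-2)}$, so $J^+$ must be controlled with exactly the same smallness, which is what makes the quantitative condition on $\delta_f$ unavoidable; and matching the \emph{sign} of the correction $\varepsilon_\lambda>0$ — i.e.\ that the rescaled Aubin--Talenti profile exits the critical layer strictly above the Coulomb function $\beta(R_\lambda/r)^{d-2}$ — is what makes the whole scheme possible. The computations extend to any $p>1$, with $2^*$ replaced by $p^*$, with no conceptual change.
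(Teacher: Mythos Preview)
Your argument is correct and takes a genuinely different route from the paper's.

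The paper argues by contradiction: assuming $v_\lambda$ vanishes for arbitrarily large $\lambda$, it invokes the separation lemma of \cite{cghy2-aihp} to compare $v_\lambda$ with a fixed $v_{\lambda_0}$, extracts from $(hE_\lambda)'=h'\,F\!\circ\!v_\lambda$ the estimate $r^{2(d-1)}E_\lambda(r)\le C\lambda^{-2}$, and then uses the asymptotic $E_\lambda'\sim -2(d-1)E_\lambda/r$ (valid because $E_\lambda\to\infty$ on the relevant interval while $|F\!\circ\!v_\lambda|$ stays bounded) to evaluate $\beta=\int_{r_\beta}^{r_0}|v_\lambda'|\,dr$ and reach the contradiction $1=1-\theta^{d-2}$. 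This argument does not exploit any special smallness of $f$ on $(0,\beta)$; in principle it applies to every admissible $f$ of the stated form, but it relies on an external result and on asymptotic identifications that require some care to justify.

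Your approach is instead constructive and self-contained: you manufacture $f$ so that the well $\delta_f=|\min F|$ is small and its minimiser $s_*$ sits near $\beta$, then compare $v_\lambda$ directly with the radial harmonic $\bar v(r)=\beta(R_\lambda/r)^{d-2}$. The crucial observation that the Aubin--Talenti data satisfy $R_\lambda v_\lambda'(R_\lambda)>-(d-2)\beta$ by the explicit margin $\varepsilon_\lambda\sim\lambda^{-2/(d-2)}$ is what makes the comparison work, and the matching smallness of the source term $J^+$ (of the same order $\lambda^{-2/(d-2)}$ once $\delta_f$ is tuned) is exactly what the construction of $f$ buys. The trade-off is that you prove the proposition only for this tailored $f$, which is all the statement asks; the paper's method, if one accepts its asymptotic steps, is more robust in $f$ but less elementary.
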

\begin{proof} For any $\nu\in(-\infty,\lambda)$, let us define $\mbox{\sc r}_\lambda(\nu):=\inf\{r>0\,:\,v_\lambda(r)=\nu\}$. It can be directly verified that
\[
\mbox{\sc r}_\beta^2(\lambda)=\frac{d\,(d-2)}{\beta^\frac2{d-2}\,\lambda^\frac4{d-2}}\(\lambda^\frac2{d-2}-\beta^\frac2{d-2}\)
\quad\mbox{and}\quad\frac12\,|v_\lambda'(\mbox{\sc r}_\beta(\lambda))|^2=\frac{d-2}{2\,d}\,\beta^{2\,\frac{d-1}{d-2}}\(\lambda^\frac2{d-2}-\beta^\frac2{d-2}\)\,.
\]
Using $h$ as in \eqref{h}, \emph{i.e.}~$h(r)=r^{2\,(d-1)}$, we know that $h\,E_\lambda$ is decreasing in $r$, hence
\[
r^{2\,(d-1)}\,E_\lambda(r)\le(\mbox{\sc r}_\beta(\lambda))^{2\,(d-1)}\,E_\lambda(\mbox{\sc r}_\beta(\lambda))\le C(\beta,d)\,\lambda^{-2}
\]
as long as $r\ge\mbox{\sc r}_\beta(\lambda)$ and $|v_\lambda(r)|\le\beta$. Assume now that $v_\lambda$ and $v_{\lambda_0}$ have at least one zero. This means that $\mbox{\sc r}_0(\lambda)$ and $\mbox{\sc r}_0(\lambda_0)$ are finite. From the \emph{separation lemma} \cite[Lemma~4.2]{cghy2-aihp}, if $\lambda>\lambda_0$, it follows that
\[
\mbox{\sc r}_s(\lambda)<\mbox{\sc r}_s(\lambda_0)\quad\mbox{and}\quad\kappa_0:=\mbox{\sc r}_s(\lambda_0)^2\,E_{\lambda_0}(\mbox{\sc r}_s(\lambda_0)<\mbox{\sc r}_s(\lambda)^2\,E_\lambda(\mbox{\sc r}_s(\lambda))
\]
for all $s\in[-\beta,\beta]$. We may assume that $\kappa_0$ is positive (otherwise replace $\lambda_0$ by $\lambda$, and then choose a larger $\lambda$). This means that $\mbox{\sc r}_0(\lambda)<\mbox{\sc r}_0(\lambda_0)$ and $r^2\,E_\lambda(r)>\kappa_0$ for any $r\in[\mbox{\sc r}_\beta(\lambda),\,\mbox{\sc r}_0(\lambda)]$. Hence $E_\lambda(r)>\kappa_0/r^2$ and so we get
\[
r^{2\,(d-2)}\le\frac{C(\beta,d)}{\kappa_0\,\lambda^2}\quad\mbox{and}\quad E_\lambda(r)\ge\kappa_0\,\(\frac{\kappa_0\,\lambda^2}{C(\beta,d)}\)^\frac1{d-2}\quad\mbox{for any }r\in[\mbox{\sc r}_\beta(\lambda),\,\mbox{\sc r}_0(\lambda)]\,.
\]
Now let us assume that we can take $\lambda$ arbitrarily large. In that case we can estimate
\[
\beta=\int_{\mbox{\sc r}_\beta(\lambda)}^{\mbox{\sc r}_0(\lambda)}|v_\lambda'(r)|\;dr\sim\lambda^\frac1{d-2}\({\mbox{\sc r}_0(\lambda)}-{\mbox{\sc r}_\beta(\lambda)}\)\quad\mbox{as }\lambda\to+\infty\,,
\]
thus proving that $\mbox{\sc r}_\beta(\lambda)/\mbox{\sc r}_0(\lambda)=\theta$ for some $\theta\in(0,1)$.

{}From~\eqref{I'}, we know that $E_\lambda'=-\,(d-1)\,|v_\lambda'|^2/r\sim-\,2\,(d-1)\,E_\lambda\,/r$, which shows that
\[
E_\lambda(r)=E_\lambda(\mbox{\sc r}_\beta(\lambda))\(\frac r{\mbox{\sc r}_\beta(\lambda)}\)^{\!-\,2\,(d-1)}(1+o(1)))\quad\mbox{as }\lambda\to+\infty\,.
\]
Using $|v_\lambda'|=\sqrt{2\,E_\lambda}\,(1+o(1)))$, we can compute again
\begin{multline*}
\beta=\int_{\mbox{\sc r}_\beta(\lambda)}^{\mbox{\sc r}_0(\lambda)}|v_\lambda'(r)|\;dr=\sqrt{2\,E_\lambda(\mbox{\sc r}_\beta(\lambda))}\int_{\mbox{\sc r}_\beta(\lambda)}^{\mbox{\sc r}_0(\lambda)}\(\frac r{\mbox{\sc r}_\beta(\lambda)}\)^{\!-\,(d-1)}\;dr\,(1+o(1)))\\
=\sqrt{2\,E_\lambda(\mbox{\sc r}_\beta(\lambda))}\,\mbox{\sc r}_\beta(\lambda)\int_1^{1/\theta}s^{1-d}\;ds\,(1+o(1)))
\,.
\end{multline*}
Since $\lim_{\lambda\to+\infty}\sqrt{2\,E_\lambda(\mbox{\sc r}_\beta(\lambda))}\,\mbox{\sc r}_\beta(\lambda)=(d-2)\,\beta$, we finally get $1=1-\theta^{d-2}$, which is an obvious contradiction.
\end{proof}

\medskip\noindent{\bf Analysis of Example~\ref{Ex:Critical2}.} This case is a limit case for which the critical exponent is achieved, with a nonlinearity which is still slightly sub-critical in some sense, as we shall see below. Although
\[
\gamma+1=\limsup_{|s|\to+\infty}\frac{s\,f(s)}{F(s)}=p^*
\]
and thus usual subcritical assumptions do not hold, $f$ satisfies $(SC)$. As $Q(r)\sim r^d$ for $r$ small, we can rely on Remark~\ref{mufinito-p*} and observe that
\[
\big(F(s)-\frac1{p^*}\,s\,f(s)\big)\,Q\Bigl(\bigl(\tfrac{(1-\alpha)\,s}{\phi_{p'}(f(s))}\bigr)^{1/p'}\Bigr)=\frac\zeta{p^*}\,(\log s)^{\frac{d-p}p\,\zeta-1}\to+\infty\quad\mbox{as }s\to+\infty\,.
\]
Problem \eqref{eq2} has therefore bounded states with any prescribed number of nodes.

\subsection*{Concluding remarks.} We conclude this section with some remarks concerning the support of a solution to \eqref{eq2} and  the case $\mu^*=0$.

\subsubsection*{Compactly supported solutions and double zeros} It should be noticed that the proof given in \cite[Proposition 2]{fls} can be adapted here to prove that a necessary and sufficient condition for a solution of \eqref{eq2} to be compactly supported is that $s\mapsto|F(s)|^{-1/p}$ is integrable in a neighborhood of $s=0$. In that case, the value of $r$ corresponding to the boundary of the support, if it is bounded,  is a double zero of the solution. For completeness, let us give some hints on the proof.

If $v_\infty$ solves \eqref{asym12}, then we can prove that $|v_\infty'|^p/p'+F(v_\infty)=0$ by multiplying the equations by $v_\infty'$ and taking a primitive. This allows to prove that
\[
r-r_0=\int_0^{v_\infty(r)}|F(s)|^{-1/p}\,ds
\]
if $r$ is in a left neighborhood of $r_0$ on which $v_\infty(r)>0$ and such that $v_\infty(r_0)=0$. By monotonicity, the above relation can be inverted to provide an expression of $v_\infty(r)$.

If we consider a solution to \eqref{ivp}, it can be proved that it is compactly supported if $s\mapsto|F(s)|^{-1/p}$ is integrable using comparison methods. For detailed results, we primarily refer to \cite{Pucci-Serrin-Zou99}, to \cite{Benilan-Brezis-Crandall75,Cortazar-Elgueta-Felmer96,Balabane-Dolbeault-Ounaies01} in case $p=2$ and to \cite{Vazquez84,Serrin-Zou99,Pucci-Serrin-Zou99,Pucci-Serrin00,Felmer-Quaas01,GarciaHuidobro-Manasevich-Serrin-Tang-Yarur01} in the general case.

\medskip If $s\mapsto|F(s)|^{-1/p}$ is not integrable in a neighborhood of $s=0$, solutions cannot be compactly supported and double zeros appear at $+\infty$, that is,
\[
\lambda\in I_k\quad\Longrightarrow\quad r_\lambda(0)=+\infty\,.
\]
This is of course consistent with Hopf's lemma, as established for instance in \cite{Vazquez84}. Assumption $(H)$ provides a sufficient condition for the solutions to have only a finite number of nodes. We do not claim that it is optimal.

For completeness, it has to be noted that the behavior of $f$ may differ in right and left neighborhoods of $s=0$. The discussion of the various cases is left to the reader.

\subsubsection*{Location of the last node} We have observed that $\lim_{\lambda\to+\infty}r(\lambda)=+\infty$, thus showing that the support of compactly supported solutions becomes larger and larger as $\lambda\to+\infty$. Such solutions have been studied in \cite{Balabane-Dolbeault-Ounaies01,dghm}.

For solutions which are not compactly supported, this means that the largest zero goes to~$\infty$ as $\lambda\to+\infty$. Notice that this does not give a lower bound on the number of zeros in a fixed interval containing $0$, which is consistent with the results of \cite{GMZ}. This means that even in the sub-critical regime the distance that separates two consecutive zeros or the lowest zero from the origin, $r=0$, may become arbitrarily small as $\lambda\to+\infty$.

Still, from Lemmata \ref{AngularVelocity} and~\ref{nw1}, it can be deduced that the last nodes become arbitrarily large when $\lambda\to+\infty$

\subsubsection*{The case $\mu^*=0$}
In this case, we claim that $(SC)$ is always satisfied for $f$ such that $f(s)\sim C\,s^\gamma$ as $|s|\to+\infty$. Indeed, we can choose any $\alpha\in(0,1)$, and $\mu\in(0,\min\{1/(\gamma+1),1/p\})$, small. From the definition of $\mu^*$, it follows that there exists $C_0>0$ such that $Q(r)\ge C_0\,r^{p+\varepsilon}$ for $r$ small, with $\varepsilon=\mu\,p^2/(1-\mu\,p)$. Hence
\ben
\Bigl(F(s_2)-\mu\,s_2\,f(s_2)\Bigr)\,Q\!\left(\Bigl(\frac{(1-\alpha)\,s}{\phi_{p'}(f(s_1))}\Bigr)^{\!1/p'} \right)\ge
C\,\big(\tfrac1{\gamma+1}-\mu\big)\,s^{\gamma+1}\,Q\!\left((1-\alpha)\,s^{[1-(p'-1)\,\gamma]\,\frac1{p'}}\right)
\een
for some positive constant $C$ and for any $s_1,s_2\in[\alpha\,s,s]$.
If $1-(p'-1)\gamma\ge 0$, then \eqref{c0} clearly holds as $Q$ is an increasing function. If $1-(p'-1)\,\gamma< 0$, then for $s$ large enough,
\[
s^{\gamma+1}\,Q\((1-\alpha)\,s^{[1-(p'-1)\,\gamma]\,\frac1{p'}}\)\ge C\,s^{\gamma+1+(p+\varepsilon)\,[1-(p'-1)\,\gamma]\frac1{p'}}\,,
\]
and since
\[
\gamma+1+(p+\varepsilon)\,[1-(p'-1)\,\gamma]\,\frac1{p'}=p+\frac\varepsilon{p'}\,[1-(p'-1)\,\gamma]>0
\]
for $\mu$ small enough, that is, for $\varepsilon$ small, our claim follows.

\appendix\section{Existence and uniqueness results}\label{Section:Uniqueness}

This Appendix is devoted to the proof of Proposition~\ref{Prop:Existence-Uniqueness}. By Proposition~\ref{basic1}, if a solution~$v=v_\lambda$ to~\eqref{ivp} exists on some interval $[0,r_0)$, then $|v_\lambda'|+|v_\lambda|\le C_\lambda$ in $[0,r_0)$ for some positive constant~$C_\lambda$. Hence, if $v_\lambda$ can be defined in an interval of the form $[0,r_0]$ for $r_0>0$, then this solution can be extended to $[0,\infty)$. Such an existence result (and a uniqueness result conditional to the values of $v_\lambda$) is proved in \cite[Proposition 9.1, 9.2]{pghms} so we omit it.

To study the unique extendibility situation at points $r_0>0$ we re-write \eqref{ivp} in the form
\be\label{cg1}
\begin{cases}v'=\phi_{p'}(w)\\
w'=\displaystyle-\frac{q'}q\,w-f(v)\\
v(r_0)=v_0\,,\quad w(r_0)=w_0
\end{cases}
\ee
and observe that the only delicate situations occur at points $(v_0,w_0)$ with either $v_0=0$ or $w_0=0$. Our approach is based on ideas that can be traced back to \cite{MR682268,MR829369} (also see~\cite{pghms}).

When $p\le 2$, a solution of \eqref{cg1} can be uniquely extended until it reaches a double zero, as in this case the right hand side is Lipschitz in a neighborhood of any point $(v_0,w_0)$ with $v_0\not=0$. When $p>2$, the argument does not apply because $\phi_{p'}$ is not locally Lipschitz near~$0$. Nevertheless unique extendibility still holds if $f(v_0)\not=0$ and $w_0=0$. Let us prove it.

Assume for simplicity that $f(v_0)<0$. From the second equation in \eqref{cg1} and using the fact that $w(r_0)=w_0=0$, we have that
\[
w'(r)=-\frac{q'}q\,w-f(v)\ge \frac12\,|f(v_0)|\quad\mbox{if}\quad |r-r_0|<\delta
\]
if $\delta>0$ is small enough, then implying that
\[\label{expl}
|w(r)|\ge|f(v_0)|\,\frac{|r-r_0|}2\quad\mbox{for all $r\in(r_0-\delta, r_0+\delta)$}\,.
\]
Hence, if $(v_1,w_1)$ and $(v_2,w_2)$ are two solutions of \eqref{cg1}, then for $r>r_0$, from the mean value theorem, using the fact that $p'-2<0$, we have
\ben
|(v_1'-v_2')(r)|=|(\phi_{p'}(w_1)-\phi_{p'}(w_2))(r)|&=&(p'-1)\,|\xi|^{p'-2}\,|(w_1-w_2)(r)|\\
&\le& C\,(r-r_0)^{p'-2}\,|(w_1-w_2)(r)|
\een
for some positive constant $C$ and thus
\[
|(v_1-v_2)(r)|\le C\,(r-r_0)^{p'-1}\,\|w_1-w_2\|
\]
where $\|\cdot\|$ represents the usual sup norm in $C([r_0-\delta,r_0+\delta];\R)$. Also from the second equation in \eqref{cg1}, using that $f$ is locally Lipschitz we find that
\[
|(w_1-w_2)(r)|\le\int_{r_0}^r|(w_1'-w_2')(s)|\;ds\le C\,\|w_1-w_2\|\,(r-r_0)+K\,\|v_1-v_2\|\,(r-r_0)
\]
for some positive constant $K$. Adding up these two last inequalities we have that
\[
\|v_1-v_2\|+\|w_1-w_2\|\le C\,(\delta^{p'-1}+\delta)\,\|w_1-w_2\|+K\,\delta\,\|v_1-v_2\|\,,
\]
or equivalently,
\[
(1-K\,\delta)\,\|v_1-v_2\|+\big(1-\,C\,(\delta^{p-1}+\delta)\big)\,\|w_1-w_2\|\le 0\,.
\]
Hence, choosing $\delta$ small enough we deduce $v_1=v_2$ and $w_1=w_2$ and unique extendibility follows.

Finally, if a solution reaches the value zero with a nonzero slope, (that is, $v_0=0$, $w_0\not=0$), then this solution can be uniquely continued by considering its inverse, \emph{i.e.}~the function $r=r(s)$ such that $v(r(s))=s$, which satisfies the equation
\be\label{r-eq0}
\begin{cases}\displaystyle(p-1)\,r''=\frac{q'}q\,r'(s)+f(s)\,|r'(s)|^p\,,\\
r(0)=r_0>0\,,\quad r'(0)=t_0\not=0\,,
\end{cases}
\ee
or equivalently,
\[
(p-1)\,r''(s)=\frac d{ds}\log\big(q(r(s))\big)+f(s)\,|r'(s)|^p\,.
\]
We recall now that $r(0)=r_0>0$, $r'(0)\not=0$, so everything that follows is well defined in a neighborhood of $s=0$. Indeed since $r$ and $r'$ are continuous, there exists $\delta_0>0$ such that for $|s|<\delta_0$ it holds that $r_0/2<r(s)\le3\,r_0/2$ and $|t_0|/2<|r'(s)|<3\,|t_0|/2$. Let $s\in(-\delta_0,\delta_0)$. Then integrating over $(0,s)$ we get
\[
(p-1)\,\big(r'(s)-t_0\big)=\log\(\frac{q(r(s)}{q(r_0)}\)+\int_0^sf(\xi)\,|r'(\xi)|^p\,d\xi\,.
\]
Let now $r_1$, $r_2$ be two solutions of \eqref{r-eq0} defined in $(-\delta_0,\delta_0)$ where $\delta_0=\min\{\delta_{1,0},\,\delta_{2,0}\}$ with obvious notations. Then, using that
\[
\big|\ |r_1'(\xi)|^p-|r_2'(\xi)|^p\big|=p\,\phi_p(|\eta|)\,|r_1'(\xi)-r_2'(\xi)|\quad\mbox{for some $|\eta|\in\big(|t_0|/2,\,3\,|t_0|/2\big)$}\,,
\]
we obtain
\[
|r_1'(s)-r_2'(s)|\le A_0\,|r_1(s)-r_2(s)|+B_0\,\int_0^s|f(\xi)|\,|r_1'(\xi)-r_2'(\xi)|\;d\xi
\]
for some positive constants $A_0$, $B_0$ depending on $r_0$, $t_0$ and $\delta_0$, and thus, if $|s_0|\le \delta_0$,
\[
\sup_{s\in[-s_0,s_0]}|r_1'(s)-r_2'(s)|\le A_0\sup_{\xi\in[-s_0,s_0]}|r_1'(\xi)-r_2'(\xi)|\,|s_0|+B_0\sup_{\xi\in[-s_0,s_0]}|r_1'(\xi)-r_2'(\xi)|\,|F(s_0)|
\]
that is,
\[
\sup_{s\in[-s_0,s_0]}|r_1'(s)-r_2'(s)|\,\Bigl(1- A_0\,|s_0|-B_0\,|F(s_0)|\Bigr)\le0\,.
\]
Since $F(0)=0$, by choosing $|s_0|$ small enough, we must have $r_1'(s)\equiv r_2'(s)$ in $[-s_0,s_0]$ and hence also $r_1(s)\equiv r_2(s)$ in $[-s_0,s_0]$.
\unskip\null\hfill$\square$\vskip 0.3cm


\end{document}